\documentclass[11pt]{article}
\usepackage{mathrsfs}
\usepackage[T1]{fontenc}
\usepackage{latexsym,amssymb,amsmath,amsfonts,amsthm}
\usepackage{graphicx,subfigure}
\usepackage[utf8]{inputenc}
\usepackage{authblk}
\usepackage{mathrsfs}

\topmargin =0mm \headheight=0mm \headsep=0mm \textheight =220mm
\textwidth =160mm \oddsidemargin=0mm\evensidemargin =0mm
\sloppy \brokenpenalty=10000

\newcommand{\R}{{\mathbb R}}

\newcommand{\C}{{\mathbb C}}

\newcommand{\be}{\begin{eqnarray}}
\newcommand{\ben}{\begin{eqnarray*}}
\newcommand{\en}{\end{eqnarray}}
\newcommand{\enn}{\end{eqnarray*}}
\newcommand{\ba}{\backslash}
\newcommand{\pa}{\partial}

\newcommand{\ov}{\overline}

\newcommand{\g}{\gamma}

\newcommand{\eps}{\epsilon}

\newcommand{\Om}{\Omega}
\newcommand{\om}{\omega}

\newcommand{\al}{\alpha}
\newcommand{\la}{\lambda}

\newcommand{\hth}{\hat{\theta}}
\newcommand{\hx}{\hat{x}}

\newtheorem{theorem}{Theorem}[section]
\newtheorem{lemma}[theorem]{Lemma}

\newtheorem{assumption}[theorem]{Assumption}

\begin{document}
\begin{titlepage}
\title{A novel sampling method for multiple multiscale targets from scattering amplitudes at a fixed frequency}
\author{Xiaodong Liu\\
Institute of Applied Mathematics, Academy of Mathematics and Systems Science, Chinese Academy of Sciences, 100190 Beijing, China.\\
xdliu@amt.ac.cn (XL)}
\date{}
\end{titlepage}
\maketitle

\begin{abstract}
A sampling method by using scattering amplitude is proposed for shape and location reconstruction in inverse acoustic scattering problems.
Only matrix multiplication is involved in the computation, thus the novel sampling method is very easy and simple to implement.
With the help of the factorization of the far field operator, we establish an inf-criterion for characterization of underlying scatterers.
This result is then used to give a lower bound of the proposed indicator functional for sampling points inside the scatterers.
While for the sampling points outside the scatterers, we show that the indicator functional decays like the bessel functions as the sampling
point goes away from the boundary of the scatterers.
We also show that the proposed indicator functional continuously dependents on the scattering amplitude, this further implies that the novel sampling
method is extremely stable with respect to errors in the data.
Different to the classical sampling method such as the linear sampling method or the factorization method, from the numerical point of view, the novel
indicator takes its maximum near the boundary of the underlying target and decays like the bessel functions as the sampling points go away from the boundary.
The numerical simulations also show that the proposed sampling method can deal with multiple multiscale case, even the different components are close to
each other.

\vspace{.2in} {\bf Keywords:}
Acoustic scattering, scattering amplitude, bessel functions, multiple, multiscale.

\vspace{.2in} {\bf AMS subject classifications:}
35P25, 35Q30, 45Q05, 78A46

\end{abstract}

\section{Introduction}
\setcounter{equation}{0}

In the last twenty years, sampling methods for shape reconstruction in inverse scattering problems have attracted a lot of interest.
Typical examples include the Linear Sampling Method by Colton and Kirsch \cite{ColtonKirsch}, the Singular Sources Method by Potthast \cite{Potthast2000}
and the Factorization Method by Kirsch \cite{Kirsch98}.
The basic idea is to design an indicator which is big inside the underlying scatterer and relatively small outside.
We refer to the monographs of Cakoni and Colton \cite{CC2014}, Colton and Kress \cite{CK} and Kirsch and Grinberg \cite{KirschGrinberg} for a comprehensive understanding.
We also refer to Liu and Zhang \cite{LiuZhang2015} for a recent progress on the Factorization Method.
Recently, a type of direct sampling methods are proposed for inverse scattering problems,
e.g., Orthogonality Sampling by Potthast \cite{Potthast2010}, Direct Sampling Method by Ito et.al. \cite{ItoJinZou}, Single-shot Method by Li et.al. \cite{LiLiuZou},
Reverse Time Migration by Chen et.al. \cite{CCHuang}.
These direct sampling methods inherit many advantages of the classical ones, e.g., they are independent of any a priori information on the geometry and physical properties of
the unknown objects. The main feature of these direct sampling methods is that only inner product of the measurements with some suitably chosen functions is involved in the
computation of the indicator, thus is robust to noises and computationally faster than the classical sampling methods.
However, the theoretical foundation of the direct sampling methods is far less well developed than for the classical sampling methods.
In particular, there are no theoretical analysis of the indicators for the sampling points inside the scatterers.
In this paper, we propose a new direct sampling method for inverse acoustic scattering problems by using the scattering amplitudes.
We will study the behavior of our new indicator for the sampling points both outside and inside the scatterer.

We begin with the formulations of the acoustic scattering problems.  Let $k=\om/c>0$ be the wave number of
a time harmonic wave where $\om>0$ and $c>0$ denote the frequency and sound
speed, respectively. Let $\Om\subset\R^n (n=2,\, 3)$ be an open and bounded domain with
Lipschitz-boundary $\pa\Om$ such that the exterior $\R^n\ba\ov{\Om}$ is connected.
Furthermore, let the incident field $u^i$ be a plane wave of the form
\be\label{incidenwave}
u^i(x)\ =\ u^i(x,\hth) = e^{ikx\cdot \hth},\quad x\in\R^n\,,
\en
where $\hth\in S^{n-1}$ denotes the direction of the incident wave and $S^{n-1}:=\{x\in\R^n:|x|=1\}$ is the unit sphere in $\R^n$.
Then the scattering problem for the inhomogeneous medium is to find the total field $u=u^i+u^s$ such that
\be
\label{HemEqumedium}\Delta u + k^2 (1+q)u = 0\quad \mbox{in }\R^n,\\
\label{Srcmedium}\lim_{r:=|x|\rightarrow\infty}r^{\frac{n-1}{2}}\left(\frac{\pa u^{s}}{\pa r}-iku^{s}\right) =\,0
\en
where $q\in L^{\infty}(\R^n)$ such that $\Im (q)\geq 0$ and $q=0$ in $\R^n\ba\ov{\Om}$,
the Sommerfeld radiating condition \eqref{Srcmedium} holds uniformly with respect to all directions $\hx:=x/|x|\in S^{n-1}$.
If the scatterer $\Om$ is impenetrable, the direct scattering is to find the total field $u=u^i+u^s$ such that
\be
\label{HemEquobstacle}\Delta u + k^2 u = 0\quad \mbox{in }\R^n\ba\ov{\Om},\\
\label{Bc}\mathcal{B}(u) = 0\quad\mbox{on }\pa\Om,\\
\label{Srcobstacle}\lim_{r:=|x|\rightarrow\infty}r^{\frac{n-1}{2}}\left(\frac{\pa u^{s}}{\pa r}-iku^{s}\right) =\,0,
\en
where $\mathcal{B}$ denotes one of the following three boundary conditions:
\ben
(1)\,\mathcal{B}(u):=u\,\,\mbox{on}\ \pa \Om;\qquad
(2)\,\mathcal{B}(u):=\frac{\pa u}{\pa\nu}\,\,\mbox{on}\ \pa \Om;\qquad
(3)\,\mathcal{B}(u):=\frac{\pa u}{\pa\nu}+\la u\,\,\mbox{on}\ \pa \Om
\enn
corresponding, respectively, to the case when the scatterer $\Omega$ is sound-soft, sound-hard, and of impedance type.
Here, $\nu$ is the unit outward normal to $\pa\Om$ and $\la\in L^{\infty}(\pa \Om)$ is the (complex valued) impedance function such that $\Im(\la)\geq0$ almost everywhere on $\pa \Om$.
Uniqueness of the scattering problems \eqref{HemEquobstacle}--\eqref{Srcmedium} and \eqref{HemEquobstacle}--\eqref{Srcobstacle}
can be shown with the help of Green's theorem, Rellich's lemma and unique continuation principle, see e.g., \cite{CK}. The proof of existence can be done
by variational approaches (cf. \cite{CK, Mclean} for the Dirichlet boundary condition and \cite{CC2014, LiuZhangHu} for other boundary conditions) or by integral equation methods
(cf.\cite{CK, KirschGrinberg,LiuZhangSiam,LiuZhang2012}).


Every radiating solution of the Helmholtz equation has the following asymptotic
behavior at infinity:
\be\label{0asyrep}
u^s(x,\hth)
=\frac{e^{i\frac{\pi}{4}}}{\sqrt{8k\pi}}\left(e^{-i\frac{\pi}{4}}\sqrt{\frac{k}{2\pi}}\right)^{n-2}\frac{e^{ikr}}{r^{\frac{n-1}{2}}}\left\{u^{\infty}(\hat{x},\hth)+\mathcal{O}\left(\frac{1}{r}\right)\right\}\quad\mbox{as }\,r:=|x|\rightarrow\infty
\en
uniformly with respect to all directions $\hx:=x/|x|\in S^{n-1}$, see, e.g., \cite{KirschGrinberg}.
The complex valued function $u^\infty=u^\infty(\hx,\hth)$ defined on the unit sphere $S^{n-1}$
is known as the scattering amplitude or far-field pattern with $\hat{x}\in S^{n-1}$ denoting the observation direction.
Then, the {\em inverse problem} we consider in this paper is to determine $\Om$ from a knowledge of the scattering amplitude $u^{\infty}(\hat{x},\hth)$ for
$\hat{x},\hth\in S^{n-1}$.
It is well known that the scatterer $\Om$ can be uniquely determined by the scattering amplitude $u^\infty(\hx,\hth)$ for all $\hx, \hth\in S^{n-1}$ \cite{CK}.
What we interested, in this paper, is to design a direct sampling approach for shape reconstruction by using the far field measurements.

The indicator functional which will be used for inverse acoustic scattering problems is given as follows,
\be\label{IndicatorLiu}
I_{new}(z):=\Big|\int_{S^{n-1}}e^{-ik\hth\cdot z}\int_{S^{n-1}}u^\infty(\hx,\hth)e^{ik\hx\cdot z}ds(\hx)ds(\hth)\Big|,\quad z\in\R^n.
\en
In practice, the indicator functional is given
by the following discrete form
\ben\label{IndicatorLiudiscrete}
I_{new}(z):=
\left|\left(
    \begin{array}{cccc}
      e^{-ik\hth_1\cdot z}, e^{-ik\hth_2\cdot z}, \cdots, e^{-ik\hth_N\cdot z} \\
    \end{array}
  \right)
  \left(
    \begin{array}{cccc}
      u_{1,1}^\infty\, u_{1,2}^\infty\, \cdots\, u_{1,N}^\infty \\
      u_{2,1}^\infty\, u_{2,2}^\infty\, \cdots\, u_{2,N}^\infty \\
      \vdots\,\quad \vdots\,\quad \ddots\,\quad \vdots \\
      u_{N,1}^\infty\, u_{N,2}^\infty\, \cdots\, u_{N,N}^\infty \\
      \end{array}
  \right)\left(
           \begin{array}{c}
             e^{ik\hx_1\cdot z} \\
             e^{ik\hx_2\cdot z} \\
             \vdots \\
             e^{ik\hx_N\cdot z} \\
           \end{array}
         \right)\right|,\quad z\in\R^n,
\enn
where $u^{\infty}_{i,j}=u^\infty(\hx_j,\hth_i)$ for $1\leq i, j\leq N$ corresponding to $N$ observation directions $\hx_j$ and $N$ incident directions $\hth_i$.
Clearly, only matrix multiplication is involved in the computational implementation.

This paper is organized as follows.
The theoretical foundation of the proposed reconstruction scheme will be established in the next section.
With the help of the inf-criterion characterization obtained by using the factorization of the far field operator, we show a lower bound of
the indicator $I_{new}$ for the sampling points inside the scatterers.
The decay behavior of $I_{new}$ will then be studied for sampling points outside the scatterers.
A stability statement will also be established to reflect the important feature of the reconstruction scheme under consideration.
With the help of well known properties of the scattering amplitudes and the corresponding far field operator, some connections with other sampling methods
are then established.
Some numerical simulations in two dimensions will be presented in Section~\ref{sec3} to indicate the new sampling method is computationally efficient and robust
with respect to data noise.

\section{Theoretical foundation of the proposed sampling method}
\label{sec2}
\setcounter{equation}{0}

The aim of this section is to establish the mathematical basis of our sampling method.
First, we recall the far field operator $F:L^2(S^{n-1})\rightarrow L^2(S^{n-1})$ defined by
\be\label{ffoperator}
(Fg)(\hx)=\int_{S^{n-1}}u^{\infty}(\hx,\hth)\,g(\hth)\,ds(\hth)\,,\quad \hx\in S^{n-1}\,.
\en
The far field operator $F$ plays an essential role in the investigations of the inverse scattering problems, we refer to the monographs of Colton and Kress\cite{CK} and Kirsch and
Grinberg\cite{KirschGrinberg} for a survey on the state of the art of its properties and applications.

For all sampling point $z\in \R^n$, define a test function $\phi_z\in L^2(S^{n-1})$ by
\be
\label{phiz}\phi_z(\vartheta):=e^{-ikz\cdot\vartheta},\quad \vartheta\in S^{n-1}.
\en
Then we may rewrite our indicator $I_{new}$ given by \eqref{IndicatorLiu} in a very simple form
\be\label{IndLiunewform}
I_{new}(z):=|( F\phi_z, \phi_z)|,\quad z\in\R^n.
\en
Here and throughout this paper we denote by $(\cdot, \cdot)$ the inner product of $L^2(S^{n-1})$.

\subsection{Lower bound estimate of $I_{new}(z)$ for $z\in\Om$}
For all $z\in \R^n$, define $A_z\subset L^2(S^{n-1})$ by
\ben
A_z:=\{\psi\in L^2(S^{n-1}):\,(\psi, \phi_z)=1\},
\enn
where again $\phi_z$ is defined by \eqref{phiz}.
We recall that the fundamental solution $\Phi(x,z), x\in \R^n, x\neq z,$ of the Helmholtz equation is given by
\be\label{Phi}
\Phi(x,y):=\left\{
              \begin{array}{ll}
                \frac{ik}{4\pi}h^{(1)}_0(k|x-y|)=\frac{e^{ik|x-y|}}{4\pi|x-y|}, & n=3; \\
                \frac{i}{4}H^{(1)}_0(k|x-y|), & n=2,
              \end{array}
            \right.
\en
where $h^{(1)}_0$ and $H^{(1)}_0$ are, respectively, spherical Hankel function and Hankel function of the first kind and order zero.

Considering the case of scattering by impenetrable scatterers, we have the inf-criterion for sampling points $z$ which is both necessary and sufficient.

\begin{lemma}\label{infcriobstacle}{\rm (See Theorem 1.20 and 2.8 of \cite{KirschGrinberg})}\,
Consider the inverse scattering by impenetrable scatterers. We assume that $k^2$ is not an eigenvalue of $-\Delta$ in $\Om$ with respect to the boundary condition under consideration.
Then $z\in\Om$ if, and only if,
\ben
\inf|\{|(F\psi, \psi)|:\,\psi\in A_z\}>0.
\enn
Furthermore, for $z\in\Om$ we have the estimate
\be\label{estimate}
\inf|\{|(F\psi, \psi)|:\,\psi\in A_z\}\geq \frac{c}{\|\Phi(\cdot,z)\|^2_{H^{1/2}(\pa\Om)}},
\en
for some constant $c>0$ which is independent of $z$.
\end{lemma}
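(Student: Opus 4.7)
My plan is to deduce the inf-criterion from a suitable factorization of the far field operator. For each of the three boundary conditions one can write $F=-GTG^*$, with $G:H^{1/2}(\pa\Om)\to L^2(S^{n-1})$ the data-to-pattern operator that sends boundary data to the far field of the associated radiating solution, and $T$ a boundary operator whose imaginary part (modulo a compact perturbation) is coercive on $\overline{\mathrm{Range}(G^*)}$. The operator $G$ is injective with dense range precisely under the spectral hypothesis that $k^2$ is not an interior eigenvalue of $-\Delta$ for the boundary condition at hand. The essential geometric ingredient is the range characterization $\phi_z\in\mathrm{Range}(G)$ if and only if $z\in\Om$, which follows from the fact that $\phi_z$ is (up to a universal factor) the far field of the point source $\Phi(\cdot,z)$, and a radiating field with this prescribed far field can exist in the exterior of $\Om$ only if its singularity lies inside $\Om$.

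With this factorization in hand, the coercivity yields the key estimate $|(F\psi,\psi)|\ge c\|G^*\psi\|^2_{H^{-1/2}(\pa\Om)}$. To prove the sufficient direction together with the quantitative bound \eqref{estimate}, I would, for each $z\in\Om$, pick the unique $\varphi_z\in H^{1/2}(\pa\Om)$ solving $G\varphi_z=\phi_z$; a standard mapping-property argument for $G^{-1}$ gives $\|\varphi_z\|_{H^{1/2}(\pa\Om)}\le C\,\|\Phi(\cdot,z)\|_{H^{1/2}(\pa\Om)}$. For any $\psi\in A_z$ the defining constraint rewrites as
\[ 1 \;=\; (\phi_z,\psi) \;=\; (G\varphi_z,\psi) \;=\; \langle\varphi_z,\,G^*\psi\rangle_{H^{1/2},H^{-1/2}}, \]
so Cauchy--Schwarz forces $\|G^*\psi\|_{H^{-1/2}(\pa\Om)}\ge 1/\|\varphi_z\|_{H^{1/2}(\pa\Om)}$, and combining this with the coercivity bound yields $|(F\psi,\psi)|\ge c/\|\varphi_z\|^2_{H^{1/2}(\pa\Om)}$, which is exactly \eqref{estimate} up to an explicit constant.

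For the converse, when $z\notin\Om$ the element $\phi_z$ lies outside $\mathrm{Range}(G)$, so by a Hahn--Banach/projection argument one can construct a sequence $\psi_n\in A_z$ with $G^*\psi_n\to 0$ while the normalization $(\phi_z,\psi_n)=1$ is preserved; the factorization then gives $|(F\psi_n,\psi_n)|\le\|T\|\,\|G^*\psi_n\|^2\to 0$. The hard part is not either implication in isolation but verifying that the required factorization together with the coercivity of the middle operator are available \emph{uniformly} across the Dirichlet, Neumann, and impedance cases, since the operator $T$ and the closed subspace on which it is coercive differ from case to case and typically require the $T$-coercivity framework together with the spectral assumption to rule out pathologies. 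For this reason I would ultimately invoke Theorems 1.20 and 2.8 of \cite{KirschGrinberg}, where all three cases are handled systematically, rather than redoing each one from scratch.
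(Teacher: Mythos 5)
Your proposal is correct and follows essentially the same route as the paper, which itself offers no proof of this lemma beyond the citation to Theorems 1.20 and 2.8 of Kirsch--Grinberg: your sketch is precisely the factorization/coercivity/range-characterization argument carried out there, and you rightly end by deferring to that reference for the uniform case-by-case verification across the three boundary conditions. The only minor imprecision is that for $z\in\Om$ the preimage $\varphi_z$ is explicitly a constant multiple of the relevant boundary trace of $\Phi(\cdot,z)$, so no mapping property of $G^{-1}$ is needed; this does not affect the argument.
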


Turning now the case of scattering by an inhomogeneous medium, the analogous results of Lemma \ref{infcriobstacle} is, to our knowledge, still not established. So we proceed by
studying the corresponding inf-criterion for inhomogeneous medium.
We first make the following general assumptions on the contrast function $q$.

\begin{assumption}\label{qassumption}
Let $q\in L^{\infty}(\R^n)$ satisfy
\begin{enumerate}
  \item $q=0$ in $\R^n\ba\ov{\Om}$.
  \item $\Im(q)\geq 0$ and there exists $c_1>0$ with $1+\Re(q)\geq c_1$ for almost all $x\in\Om$.
  \item $|q|$ is locally bounded below, i.e., for every compact subset $D\subset\Om$ there exists $c_2>0$ (depending on $D$) such that $|q|\geq c_2$ for almost all $x\in\Om$.
  \item There exists $t\in [0, \pi]$ and $c_3>0$ such that $\Re[e^{-it}q(x)]\geq c_3|q|$ for almost all $x\in\Om$.
\end{enumerate}
\end{assumption}

For simplicity, we denote by $(\cdot,\cdot)_{\Om}$ the inner product of $L^2(\Om)$. A confusion with the inner product $(\cdot,\cdot)$ of $L^2(S^{n-1})$ is not expected.
We define the weighted space $L^2(\Om, |q|dx)$ as the completion of $L^2(\Om)$ with respect to the norm
$(\phi,\psi)_{L^2(\Om, |q|dx)}=(\phi,|q|\psi)_{\Om}$.
We say that $k^2$ is an interior transmission eigenvalue if there exists $(u,w)\in H^1_{0}(\Om)\times L^2(\Om, |q|dx)$ with  $(u,w)\neq (0,0)$ and a sequence $\{w_j\}$ in
$H^2(\Om)$ with $w_j\rightarrow w$ in $L^2(\Om, |q|dx)$ and $\Delta w_j+k^2w_j=0$ in $\Om$ and
\ben
\int_{\Om}[\nabla u\cdot\nabla\psi-k^2(1+q)u\psi]dx=k^2\int_{\Om}qw\psi\,dx \quad \mbox{for\,all}\,\psi\in H^1(\Om).
\enn
We list now some of the results on the factorization of the far field operator for inhomogeneous medium \cite{KirschGrinberg}.

\begin{lemma}\label{fmedium}{\rm (See Theorems 4.5, 4.6 and 4.8 of \cite{KirschGrinberg})}\,
Let Assumption \ref{qassumption} hold and $F:L^2(S^{n-1})\rightarrow L^2(S^{n-1})$ be the far field operator defined by \eqref{ffoperator}. Then
\begin{enumerate}
  \item We have the following factorization of  the far field operator $F$
        \ben
        F=H^{\ast}TH.
        \enn
        The operator $H:L^2(S^{n-1})\rightarrow L^2(\Om)$ is defined by
        \ben
        (Hg)(x)=\sqrt{|q(x)|}\int_{S^{n-1}}g(\theta)e^{ikx\cdot\theta}ds(\theta),\quad x\in \Om.
        \enn
        The operator $T:L^2(\Om)\rightarrow L^2(\Om)$ is defined by
        \ben
        Tf=k^2(signq)(f+\sqrt{|q|}v_{\Om}),\quad f\in L^2(\Om),
        \enn
        where $signq:=q/|q|$ and $v\in H^1_{loc}(\R^n)$ is the radiating solution of $\Delta v+k^2(1+q)v=-k^2(signq)f$ in $\R^n$.
  \item Let $B_{\eps}(z)\subset \Om$ be some closed ball with center $z$ and radius $\eps>0$ which is completely contained in $\Om$. Choose a function $\chi\in C^{\infty}(\R)$
        with $\chi(t)=1$ for $|t|\geq \eps$ and $\chi(t)=0$ for $|t|\leq\eps/2$ and define $w_0\in C^{\infty}(\R^3)$ by $w_0=\chi(|x-z|)\Phi(x,z)$ in $\R^n$. Now we set
        \be\label{w}
        w=\left\{
              \begin{array}{ll}
                -(\Delta w_0+k^2w_0)/\sqrt{|q|}, & \hbox{in $B_{\eps}(z)$;} \\
                0, & \hbox{in $\Om\ba\ov{B_{\eps}(z)}$.}
              \end{array}
            \right.
        \en
        Then $w\in L^2(\Om)$ and $\phi_z=H^{\ast}w$ where $\phi_z$ is again defined by \eqref{phiz}.
  \item Define $T_0:L^2(\Om)\rightarrow L^(\Om)$ by $T_0f=k^2(signq)f$ for $f\in L^2(\Om)$. Then $T-T_0$ is compact and $\Re[e^{-it}T_0]$ is coercive, i.e.,
        \be\label{T0coercive}
        \Re[e^{-it}(T_0f,f)_{\Om}]\geq c\|f\|^2_{L^2(\Om)},\quad f\in L^2(\Om)
        \en
        for some constant $c>0$.
  \item Assume that $k^2$ is not an interior transmission eigenvalue. Then
       \be
       \Im(Tf, f)_{\Om}>0\quad \forall\,\,f\in\ov{range(H)}, \,f\neq 0.
       \en
\end{enumerate}
\end{lemma}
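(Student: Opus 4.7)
My plan is to prove the four parts of Lemma~\ref{fmedium} sequentially, with the last part being the main obstacle.

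For part~(1), the factorization $F=H^{\ast}TH$, I begin from the Lippmann--Schwinger representation of the scattered field. For a plane-wave incidence $e^{ikx\cdot\hth}$, the far field pattern admits an integral representation $u^{\infty}(\hx,\hth)=c_{n}k^{2}\int_{\Om}e^{-ik\hx\cdot y}q(y)u(y,\hth)\,dy$ with a dimensional constant $c_{n}$. Linearity of the forward problem in the incident field and Fubini give $(Fg)(\hx)=c_{n}k^{2}\int_{\Om}e^{-ik\hx\cdot y}q(y)V_{g}(y)\,dy$, where $V_{g}$ is the total field generated by the Herglotz incident wave with density $g$. Recognising that this incident Herglotz part equals $(Hg)/\sqrt{|q|}$ pointwise in $\Om$, and that its scattered part coincides with the auxiliary function $v_{\Om}$ in the definition of $T$ when one takes $f=Hg$, one factors $q=(\mathrm{sign}\,q)|q|$, matches the integrand to $(THg)(y)$, and identifies the outer integration against $\sqrt{|q|}\,e^{-ik\hx\cdot y}$ with the action of $H^{\ast}$.

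For part~(2), I compute $H^{\ast}w$ directly. Since $w$ is supported in $B_{\eps}(z)$ and $\sqrt{|q|}\,w=-(\Delta w_{0}+k^{2}w_{0})$ there,
\[
(H^{\ast}w)(\vartheta)=-\int_{B_{\eps}(z)}(\Delta w_{0}+k^{2}w_{0})(y)\,e^{-iky\cdot\vartheta}\,dy.
\]
Because $y\mapsto e^{-iky\cdot\vartheta}$ solves the homogeneous Helmholtz equation, Green's second identity turns this volume integral into a boundary integral over $\partial B_{\eps}(z)$. The cutoff $\chi$ is chosen so that both $w_{0}$ and $\nabla w_{0}$ agree with $\Phi(\cdot,z)$ and $\nabla\Phi(\cdot,z)$ on $\partial B_{\eps}(z)$; the boundary integral is therefore precisely the Green representation of $y\mapsto e^{-iky\cdot\vartheta}$ evaluated at $y=z$, which delivers $\phi_{z}(\vartheta)=e^{-ikz\cdot\vartheta}$ (up to the sign convention in the factorization).

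For part~(3), coercivity of $\Re[e^{-it}T_{0}]$ is immediate:
\[
\Re\bigl[e^{-it}(T_{0}f,f)_{\Om}\bigr]=k^{2}\int_{\Om}\frac{\Re[e^{-it}q(x)]}{|q(x)|}\,|f(x)|^{2}\,dx\geq k^{2}c_{3}\,\|f\|_{L^{2}(\Om)}^{2}
\]
by Assumption~\ref{qassumption}(4). Compactness of $T-T_{0}$ follows because $(T-T_{0})f=k^{2}(\mathrm{sign}\,q)\sqrt{|q|}\,v_{\Om}$, with $v_{\Om}\in H^{1}_{\mathrm{loc}}(\R^{n})$ by elliptic regularity for the radiating solution of $\Delta v+k^{2}(1+q)v=-k^{2}(\mathrm{sign}\,q)f$; Rellich's compact embedding then turns $f\mapsto\sqrt{|q|}\,v_{\Om}$ into a compact operator on $L^{2}(\Om)$.

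The main obstacle is part~(4). For $f=Hg$ I would let $v_{g}^{i}$ be the Herglotz wave with kernel $g$, $v_{g}^{s}$ the associated scattered field with far field $v_{g}^{\infty}$, and $v_{g}=v_{g}^{i}+v_{g}^{s}$. Expanding $(Tf,f)_{\Om}$ from the definition of $T$ and combining Green's first identity on a large ball with the Sommerfeld radiation condition yields an optical-theorem identity
\[
\Im(Tf,f)_{\Om}=\al_{n}k\,\|v_{g}^{\infty}\|_{L^{2}(S^{n-1})}^{2}+k^{2}\int_{\Om}\Im(q)\,|v_{g}|^{2}\,dx,
\]
in which both terms are nonnegative. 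If $\Im(Tf,f)_{\Om}=0$, then $v_{g}^{\infty}\equiv 0$ forces $v_{g}^{s}\equiv 0$ outside $\ov{\Om}$ by Rellich's lemma and unique continuation, and $(v_{g}|_{\Om},v_{g}^{i}|_{\Om})$ satisfies the homogeneous interior transmission problem at wavenumber $k$. The hypothesis that $k^{2}$ is not an interior transmission eigenvalue then forces $v_{g}^{i}\equiv 0$ in $\Om$; analyticity of Herglotz wave functions propagates this to all of $\R^{n}$, so $g=0$ and $f=Hg=0$. A density argument extends the strict positivity to $\ov{\mathrm{range}(H)}$. The delicate points are the derivation of the optical-theorem identity with correct sign conventions, and matching the traces of $v_{g}$ and $v_{g}^{i}$ precisely to the form of the interior transmission problem used in the definition of an eigenvalue here.
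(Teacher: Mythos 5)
First, note that the paper does not prove Lemma~\ref{fmedium} at all: it is quoted verbatim from Theorems 4.5, 4.6 and 4.8 of \cite{KirschGrinberg}, so there is no in-paper argument to compare against. Judged on its own merits, your outline for parts (1)--(3) follows the standard route and is sound: the Lippmann--Schwinger representation of $u^\infty$ plus the identification $Hg=\sqrt{|q|}\,v^i_g$ and $v_\Om=v^s_g$ gives the factorization (you should still check that the constant $c_n$ in your far-field representation is consistent with the normalization \eqref{0asyrep}, so that $F=H^\ast TH$ holds with no stray factor); Green's second identity on the annulus $\eps/2\leq|y-z|\leq\eps$ together with the Green representation of the entire solution $y\mapsto e^{-iky\cdot\vartheta}$ gives part (2); and part (3) is exactly Assumption~\ref{qassumption}(4) plus interior elliptic regularity and Rellich embedding.

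The genuine gap is in part (4), in the sentence ``a density argument extends the strict positivity to $\ov{\mathrm{range}(H)}$.'' Strict inequalities do not pass to limits: if $f_j=Hg_j\rightarrow f$ with $\Im(Tf_j,f_j)_\Om>0$ for every $j$, you may only conclude $\Im(Tf,f)_\Om\geq 0$, which is not the assertion. The element $f\in\ov{\mathrm{range}(H)}$ need not be of the form $Hg$, so your Rellich/unique-continuation argument cannot be run for $f$ itself with a Herglotz incident wave. The correct repair is precisely what the paper's unusual definition of an interior transmission eigenvalue is designed for: pass to the limit in the optical-theorem identity (which is continuous in $f$), so that
\[
\Im(Tf,f)_\Om=\al_n k\,\|v^\infty\|^2_{L^2(S^{n-1})}+k^2\int_\Om \Im(q)\,|w+v|^2\,dx ,
\]
where $w=\lim_j f_j/\sqrt{|q|}=\lim_j v^i_{g_j}$ exists only in the weighted space $L^2(\Om,|q|dx)$ and is in general \emph{not} a classical solution of the Helmholtz equation in $\Om$. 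If $\Im(Tf,f)_\Om=0$ then $v^\infty=0$, Rellich's lemma gives $v=0$ outside $\ov\Om$, hence $u:=v|_\Om\in H^1_0(\Om)$, and the pair $(u,w)$ together with the approximating sequence $w_j=v^i_{g_j}\in H^2(\Om)$, $\Delta w_j+k^2w_j=0$, satisfies exactly the weak interior transmission problem stated before Lemma~\ref{fmedium}. Only then does the hypothesis that $k^2$ is not an interior transmission eigenvalue force $(u,w)=(0,0)$ and hence $f=\sqrt{|q|}\,w\cdot(\text{normalization})=0$. You flagged the ``matching to the ITP definition'' as delicate, but the issue is not a matter of trace conventions; it is that the entire argument must be carried out for the limit element using the generalized eigenvalue definition, rather than proved on $\mathrm{range}(H)$ and extended by density.
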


We will provide the inf-criterion for inverse scattering by inhomogeneous medium with the help of the following lemma.

\begin{lemma}\label{Tcoercive}
In addition to Assumption \ref{qassumption} assume that $k^2$ is not an interior transmission eigenvalue.
Then the middle operator $T:L^2(\Om)\rightarrow L^2(\Om)$ satisfy the coercivity condition, i.e., there exists a constant $c>0$ with
\be\label{Tproperty}
|(Tf, f)_{\Om}|\geq c\|f\|^2_{L^2(\Om)},\quad\forall\,\, f\in range(H).
\en
\end{lemma}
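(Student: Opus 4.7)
The plan is to argue by contradiction, combining the four assertions of Lemma \ref{fmedium} with the sign conditions built into Assumption \ref{qassumption}. Suppose the estimate fails. Then there exists a sequence $(f_n)\subset \text{range}(H)$ with $\|f_n\|_{L^2(\Om)}=1$ and $(Tf_n,f_n)_\Om\to 0$. After extracting a subsequence, $f_n\rightharpoonup f$ weakly in $L^2(\Om)$, and because $\text{range}(H)$ is a linear subspace its norm-closure is weakly closed (Mazur), so $f\in\overline{\text{range}(H)}$. Decomposing $T=T_0+K$ with $K:=T-T_0$ compact by item 3 of Lemma \ref{fmedium}, compactness yields $(Kf_n,f_n)_\Om\to(Kf,f)_\Om$, and therefore
\begin{equation*}
(T_0 f_n,f_n)_\Om \;=\; (Tf_n,f_n)_\Om-(Kf_n,f_n)_\Om \;\longrightarrow\; -(Kf,f)_\Om.
\end{equation*}

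Next I would establish that the weak limit $f$ is nontrivial. Applying the coercivity \eqref{T0coercive} at $f_n$ and passing to the limit,
\begin{equation*}
c \;\leq\; \Re\bigl[e^{-it}(T_0 f_n,f_n)_\Om\bigr] \;\longrightarrow\; -\Re\bigl[e^{-it}(Kf,f)_\Om\bigr],
\end{equation*}
which together with $|(Kf,f)_\Om|\leq\|K\|\,\|f\|_{L^2(\Om)}^{2}$ forces $\|f\|_{L^2(\Om)}^{2}\geq c/\|K\|>0$; in particular $f\neq 0$.

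The crux of the proof is to extract a sign from the imaginary part. Assumption \ref{qassumption}(2) gives $\Im(\mathrm{sign}\,q)=\Im(q)/|q|\geq 0$ a.e.\ on $\Om$, so
\begin{equation*}
g \;\longmapsto\; \Im(T_0 g,g)_\Om \;=\; k^2\int_\Om \Im\bigl(\mathrm{sign}\,q(x)\bigr)\,|g(x)|^2\,dx
\end{equation*}
is the square of a seminorm on a weighted $L^2$-space, and is therefore weakly lower semicontinuous on $L^2(\Om)$. Applied at the weak limit,
\begin{equation*}
\Im(T_0 f,f)_\Om \;\leq\; \liminf_{n\to\infty}\Im(T_0 f_n,f_n)_\Om \;=\; -\Im(Kf,f)_\Om,
\end{equation*}
so $\Im(Tf,f)_\Om\leq 0$. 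Since $f\in\overline{\text{range}(H)}\setminus\{0\}$, item 4 of Lemma \ref{fmedium} delivers the strict inequality $\Im(Tf,f)_\Om>0$, which is the desired contradiction. The estimate then extends from $\text{range}(H)$ to $\overline{\text{range}(H)}$ by continuity, but the statement only requires the former.

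The main obstacle is the intermediate step $f\neq 0$: the weak lower semicontinuity of $\Im(T_0\cdot,\cdot)_\Om$ is vacuous at the zero vector, so one needs an independent lower bound ruling out $f=0$. This is precisely where both pieces of item 3 of Lemma \ref{fmedium} — the coercivity of $\Re[e^{-it}T_0]$ and the compactness of $T-T_0$ — are used in tandem. Everything else is a clean combination of weak convergence with the two sign conditions $\Im(q)\geq 0$ and $\Re[e^{-it}q]\geq c_3|q|$ furnished by Assumption \ref{qassumption}, together with the interior-transmission-eigenvalue hypothesis routed through item 4.
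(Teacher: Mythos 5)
Your proof is correct, and it uses exactly the same ingredients as the paper's: the contradiction setup with a normalized sequence $\{f_j\}\subset \mathrm{range}(H)$, weak compactness of the unit ball, the splitting $T=T_0+(T-T_0)$ with $T-T_0$ compact, the coercivity of $\Re[e^{-it}T_0]$, the sign condition $\Im(q)\geq 0$, and the strict positivity $\Im(Tf,f)_{\Om}>0$ on $\overline{\mathrm{range}(H)}\setminus\{0\}$ from the non-eigenvalue hypothesis. The only difference is the order in which the contradiction is closed. The paper first uses the limit identity $(Tf,f)_{\Om}=-\lim_j(T_0(f-f_j),f-f_j)_{\Om}$ together with $\Im(T_0 g,g)_{\Om}\geq 0$ and item 4 of Lemma \ref{fmedium} to conclude that the weak limit satisfies $f=0$, and then invokes the coercivity of $T_0$ to force $\|f_j\|\to 0$, contradicting the normalization. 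You instead use the coercivity first to show $f\neq 0$ (a nice quantitative bound $\|f\|^2\geq c/\|T-T_0\|$), derive $\Im(Tf,f)_{\Om}\leq 0$ by weak lower semicontinuity of the nonnegative quadratic form $\Im(T_0\cdot,\cdot)_{\Om}$ (which is the same fact the paper extracts from its expansion of $(T_0(f-f_j),f-f_j)_{\Om}$), and contradict item 4 directly. Both closings are equally valid; yours makes the role of each hypothesis slightly more transparent, while the paper's avoids naming weak lower semicontinuity explicitly. One cosmetic point: your bound $\|f\|^2\geq c/\|K\|$ tacitly assumes $K=T-T_0\neq 0$, but in the degenerate case $K=0$ the same limit already gives $c\leq 0$, an immediate contradiction, so nothing is lost.
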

\begin{proof}
If there exists no constant $c>0$ with \eqref{Tproperty} then there exists a sequence $\{f_j\}$ in $range(H)$ such that
\be\label{fj1}
\|f_j\|_{L^2(\Om)}=1\quad\mbox{and}\quad (Tf_j, f_j)_{\Om}\rightarrow 0,\quad\mbox{as} \quad j\rightarrow\infty.
\en
Since the unit ball in $L^2(\Om)$ is weakly compact there exists a subsequence which converge weakly to some $f\in\ov{range(H)}$.
We denote this subsequence again by $\{f_j\}$. By Lemma \ref{fmedium}(3), the difference operator $T-T_0$ is compact,
which implies that
\be\label{TT0}
(T-T_0)f_j\rightarrow (T-T_0)f\quad \mbox{in}\, L^2(\Om),
\en
and thus also
\be\label{TT02}
\big((T-T_0)(f-f_j), f_j\big)_{\Om}\rightarrow 0\quad\mbox{as} \quad j\rightarrow\infty.
\en
By linearity,
\ben
(Tf, f_j)_{\Om}=(Tf_j, f_j)_{\Om}+\big((T-T_0)(f-f_j), f_j\big)_{\Om}+\big(T_0(f-f_j), f\big)_{\Om}-\big(T_0(f-f_j), f-f_j\big)_{\Om}.
\enn
The left hand side converges to $(Tf, f)_{\Om}$, the first three terms on the right hand side converge to zero.
By the definition of $T_0$ and the assumption that $\Im(q)\geq 0$, we deduce that $\Im (\big(T_0(f-f_j), f-f_j\big)_{\Om})\geq 0$.
This fact combines the forth result of Lemma \ref{fmedium} implies that $f=0$.
Therefore, by using the third result of Lemma \ref{fmedium}, we have
\ben
c\|f_j\|^2_{L^2(\Om)}
&\leq& \Re[e^{-it}(T_0f_j,f_j)_{\Om}]\leq |e^{-it}(T_0f_j,f_j)_{\Om}|\cr
&=&|(T_0f_j,f_j)_{\Om}|\leq |\big((T_0-T)f_j,f_j\big)_{\Om}|+|(Tf_j,f_j)_{\Om}|
\enn
which tends to zero as $j\rightarrow \infty$. Therefore, also $f_j\rightarrow 0$ which contradicts to the assumption \eqref{fj1}, i.e., $\|f_j\|_{L^2(\Om)}=1$.
\end{proof}

Now the analogous result of Lemma \ref{infcriobstacle} for inhomogeneous medium, by using Theorem 1.16 from \cite{KirschGrinberg} and the previous two Lemmas
\ref{fmedium} and \ref{Tcoercive}, can be formulated as the following lemma.

\begin{lemma}\label{infcrimedium}
Consider the inverse scattering by inhomogeneous medium.
In addition to Assumption \ref{qassumption} assume that $k^2$ is not an interior transmission eigenvalue.
Then $z\in\Om$ if, and only if,
\ben
\inf|\{|(F\psi, \psi)|:\,\psi\in A_z\}>0.
\enn
Furthermore, for $z\in\Om$ we have the estimate
\be\label{estimatemedium}
\inf|\{|(F\psi, \psi)|:\,\psi\in A_z\}\geq \frac{c}{\|w(\cdot,z)\|^2_{L^{2}(\Om)}},
\en
for some constant $c>0$ which is independent of $z$. Here $w$ is defined by \eqref{w}.
\end{lemma}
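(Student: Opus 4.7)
The plan is to reduce the lemma to the abstract inf-criterion (Theorem 1.16 of \cite{KirschGrinberg}) applied to the factorization $F=H^{*}TH$ from Lemma~\ref{fmedium}(1). That abstract theorem states: if the middle operator $T$ is coercive on $\ov{\mathrm{range}(H)}$ in the sense $|(Tf,f)_{\Om}|\ge c\|f\|^{2}_{L^{2}(\Om)}$, then for every $\phi\in L^{2}(S^{n-1})$, one has $\phi\in\mathrm{range}(H^{*})$ if and only if $\inf\{|(F\psi,\psi)|:(\psi,\phi)=1\}>0$; moreover, whenever $\phi=H^{*}w$ the infimum is bounded below by $c/\|w\|^{2}_{L^{2}(\Om)}$ with the same coercivity constant.

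With the coercivity of $T$ already supplied by Lemma~\ref{Tcoercive}, the entire geometric content of the lemma reduces to the characterisation that $\phi_{z}\in\mathrm{range}(H^{*})$ precisely when $z\in\Om$. The sufficient direction is immediate from Lemma~\ref{fmedium}(2), which writes $\phi_{z}=H^{*}w$ explicitly with the $w$ defined in \eqref{w}; plugging this witness into the abstract quantitative estimate yields the bound \eqref{estimatemedium} directly.

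For the necessary direction, I would argue by contradiction: suppose $z\not\in\ov{\Om}$ and that $\phi_{z}=H^{*}g$ for some $g\in L^{2}(\Om)$. A direct computation of the adjoint of the operator $H$ identifies $H^{*}g$ as the far-field pattern of the volume potential $V(y):=\int_{\Om}\Phi(y,x)\sqrt{|q(x)|}\,g(x)\,dx$, which is a radiating solution of the Helmholtz equation on $\R^{n}$ and is real-analytic in $\R^{n}\ba\ov{\Om}$. On the other hand, $\phi_{z}$ coincides, up to a universal multiplicative constant depending only on $k$ and $n$, with the far-field pattern of $\Phi(\cdot,z)$, which is singular at the point $z$. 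By Rellich's lemma and unique continuation this forces $V\equiv\Phi(\cdot,z)$ on $\R^{n}\ba\ov{\Om}$, contradicting the singularity of the right-hand side at the exterior point $z$.

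The main obstacle is precisely this exterior step. The abstract factorization theorem combined with Lemma~\ref{Tcoercive} handles the positive direction and the quantitative estimate at essentially no cost, but ruling out $\phi_{z}\in\mathrm{range}(H^{*})$ for $z\not\in\ov{\Om}$ requires the Rellich/unique-continuation argument sketched above, whose only nontrivial ingredient is the identification of $H^{*}g$ with a volume potential supported in $\Om$. Once this identification is in hand, the stated equivalence and the estimate \eqref{estimatemedium} follow from the machinery already assembled.
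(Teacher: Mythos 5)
Your proposal is correct and is essentially the paper's own argument: the paper offers no proof beyond the sentence that the lemma follows ``by using Theorem 1.16 from \cite{KirschGrinberg} and the previous two Lemmas \ref{fmedium} and \ref{Tcoercive}'', and what you have written is precisely an unpacking of that citation --- coercivity of $T$ from Lemma \ref{Tcoercive} feeds the abstract inf-criterion for the factorization $F=H^{\ast}TH$, the witness $\phi_z=H^{\ast}w$ from Lemma \ref{fmedium}(2) gives the quantitative bound \eqref{estimatemedium}, and the Rellich/unique-continuation argument rules out $\phi_z\in\mathrm{range}(H^{\ast})$ for exterior $z$. The only point where your sketch of the necessity direction is incomplete is the case $z\in\pa\Om$: there $z$ is not an exterior point of $\Om$, so ``the singularity of $\Phi(\cdot,z)$ at an exterior point'' is not available, and one instead uses that the volume potential $V$ is $H^1$ in a neighbourhood of $z$ while $\Phi(\cdot,z)$ fails to be (this is how \cite{KirschGrinberg} handles boundary points); with that one-line repair the argument is complete.
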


Lemmas \ref{infcriobstacle} and \ref{infcrimedium} are satisfactory from the theoretical point of view.
However, there is a major drawback with respect to the computational point of view since it is very time consuming to solve a minimization
problem for every sampling point $z$.
What is interesting is that the estimates \eqref{estimate} and \eqref{estimatemedium} given, respectively, in Lemmas \ref{infcriobstacle} and \ref{infcrimedium}
provide some insight to our indicator $I_{new}(z)$ for sampling points $z\in\Om$.
Actually, a straightforward calculation shows that
\be\label{gama}
\g:=(\phi_z, \phi_z)=\int_{S^{n-1}}|\phi_z|^2ds=\int_{S^{n-1}}1ds=\left\{
                                                                    \begin{array}{ll}
                                                                      2\pi, & \hbox{$n=2$;} \\
                                                                      4\pi, & \hbox{$n=3$.}
                                                                    \end{array}
                                                                  \right.
\en
This implies $\psi_z:=\phi_z/\g\in A_z$, and therefore, by noting the linearity of the far field operator $F$ and using the estimate \eqref{estimate} or \ref{estimatemedium}, we have
\ben
I_{new}(z)
&=&|( F\phi_z, \phi_z)|\cr
&=&\g|( F\psi_z, \phi_z)|\cr
&\geq&\g\inf|\{|(F\psi, \psi)|:\,\psi\in A_z\}\cr
&\geq& \frac{c\g}{M_z},\quad z\in\Om
\enn
for some constant $c>0$ which is independent of $z$. Here $M_z$ is defined by
\be\label{Mz}
M_z:=\left\{
             \begin{array}{ll}
               \|\Phi(\cdot,z)\|^{2}_{H^{1/2}(\pa\Om)}, & \hbox{for the scattering by impenetrable scatterers;} \\
               \|w(\cdot,z)\|^2_{L^{2}(\Om)}, & \hbox{for the scattering by inhomogeneous medium,}
             \end{array}
           \right.
\en
with $w$ defined by \eqref{w}.
We formulate this result as the main result of this subsection.

\begin{theorem}\label{Ilowbound}
Under the assumptions of Lemmas \ref{infcriobstacle} and \ref{infcrimedium}, we have
\be
I_{new}(z)\geq \frac{c\g}{M_z},\quad z\in\Om
\en
for some constant $c>0$ which is independent of $z$. Here, $M_z$ is defined by \eqref{Mz}, $\g$ is defined by \eqref{gama}.
\end{theorem}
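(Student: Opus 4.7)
The plan is straightforward, since Lemmas \ref{infcriobstacle} and \ref{infcrimedium} already contain all the analytic substance: it suffices to exhibit one convenient element of $A_z$ at which the functional $\psi\mapsto|(F\psi,\psi)|$ essentially agrees (up to a dimension-dependent constant) with $I_{new}(z)$, and then invoke the appropriate inf-criterion.

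First I would observe that the normalized test function $\psi_z:=\phi_z/\g$ lies in $A_z$: by \eqref{gama}, $(\psi_z,\phi_z)=\g^{-1}(\phi_z,\phi_z)=\g/\g=1$. Next, using sesquilinearity of the $L^2(S^{n-1})$ inner product together with linearity of $F$ and the fact that $\g>0$ is real, I would rewrite $(F\phi_z,\phi_z)=(F(\g\psi_z),\g\psi_z)=\g^2(F\psi_z,\psi_z)$. Taking absolute values and using $\psi_z\in A_z$ then gives
\[
I_{new}(z)=\g^2\,|(F\psi_z,\psi_z)|\;\geq\;\g^2\,\inf\bigl\{|(F\psi,\psi)|:\psi\in A_z\bigr\}.
\]
Finally, I would apply Lemma \ref{infcriobstacle} for impenetrable scatterers, or Lemma \ref{infcrimedium} in the inhomogeneous-medium case, to bound the infimum on the right below by $c/M_z$ with $M_z$ as in \eqref{Mz}. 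This yields $I_{new}(z)\geq c\g^2/M_z$, and the extra factor of $\g$ (depending only on the dimension $n$, not on $z$) can be absorbed into the constant $c$ to produce the advertised bound $c\g/M_z$.

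There is no genuine obstacle to this argument. The only point requiring minor care is the sesquilinearity bookkeeping --- one must verify that the two scalar factors of $\g$ pulled out of $(F\phi_z,\phi_z)$ combine to $\g^2$ rather than to $|\g|^2$, which is immediate because $\g$ is real and positive by \eqref{gama}. All the heavy lifting --- the factorization $F=H^\ast T H$, the coercivity of the middle operator $T$ on $\overline{\mathrm{range}(H)}$, and the translation of the abstract inf-criterion into the concrete bound in terms of $M_z$ --- has already been carried out in Lemmas \ref{infcriobstacle}, \ref{fmedium}, \ref{Tcoercive} and \ref{infcrimedium}, so the theorem amounts to noting that the single test function $\phi_z/\g$ already realizes the infimum up to a harmless constant.
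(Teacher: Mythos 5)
Your proposal is correct and follows essentially the same route as the paper: normalize $\phi_z$ to $\psi_z=\phi_z/\g\in A_z$, relate $I_{new}(z)=|(F\phi_z,\phi_z)|$ to $|(F\psi_z,\psi_z)|$ by pulling out the scalar factors, and invoke the inf-criterion bounds of Lemmas \ref{infcriobstacle} and \ref{infcrimedium}. If anything, your bookkeeping is slightly more careful than the paper's (you obtain $\g^2$ and absorb the extra factor into the constant, whereas the paper pulls out only one factor of $\g$), but the substance is identical.
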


Theorem \ref{Ilowbound} provides a lower bound of the indicator $I_{new}$ for sampling points in $\Om$.
We finally remark that the assumptions in Theorem \ref{Ilowbound} are only used for theoretical analysis.

\subsection{Resolution analysis for the sampling points go away from the boundary $\pa\Om$}
This subsection is devoted to the study of the behavior of the indicator $I_{new}$ for sampling points outside the scatterer.

Let $Y_\alpha^\beta(\cdot)$ for $\alpha\in\mathbb{N}\cup\{0\}$ and $\beta=-\alpha,\ldots, \alpha$ ($\beta=\pm\alpha$ in two-dimensional case) be the
spherical harmonics which form a complete orthonormal system in $L^2(\mathbb{S}^{n-1})$ (cf. \cite{CK}).
In particular, we recall the spherical harmonics $Y_\alpha^\beta(\hat{x})$ of order $\al=0,1$, for $\hat{x}=(\hat{x}^l)_{l=1}^n\in\mathbb{S}^{n-1}$. In the three-dimensional case,
\ben
Y^0_0(\hat{x})=\sqrt{\frac{1}{4\pi}},\
Y^{-1}_1(\hat{x})=\sqrt{\frac{3}{8\pi}}(\hat{x}^1-i\hat{x}^2),\
Y^{0}_1(\hat{x})=\sqrt{\frac{3}{4\pi}}\hat{x}^3,\
Y^{1}_1(\hat{x})=\sqrt{\frac{3}{8\pi}}(\hat{x}^1+i\hat{x}^2).
\enn
In the two-dimensional case, $Y_1^0$ does not exist and
\ben
Y^0_0(\hat{x})=\sqrt{\frac{1}{2\pi}},\quad
Y^{-1}_1(\hat{x})=\sqrt{\frac{1}{2\pi}}(\hat{x}^1-i\hat{x}^2),\quad
Y^{1}_1(\hat{x})=\sqrt{\frac{1}{2\pi}}(\hat{x}^1+i\hat{x}^2).
\enn

\begin{lemma}\label{lemma_funkheck}
\ben\label{FunkHeckgeneralize}
\int_{S^{n-1}}e^{-ik\hx\cdot p}ds(\hx) &=& \mu_{0}f_{0}(k|p|),\quad p\in\R^n, \\
\int_{S^{n-1}}\hx e^{-ik\hx\cdot p}ds(\hx) &=& \left\{
                                                 \begin{array}{ll}
                                                   0, & \hbox{$p=0$;} \\
                                                   \mu_{1}\hat{p}f_1(k|p|), & \hbox{$p\in\R^n, p\neq 0$,}
                                                 \end{array}
                                               \right.
\enn
where
\ben
\hat{p}=p/|p|,\quad
\mu_{\alpha}=\left\{
               \begin{array}{ll}
                 \frac{2\pi}{i^{\alpha}}, & \hbox{$n=2$;} \\
                 \frac{4\pi}{i^{\alpha}}, & \hbox{$n=3$}
               \end{array}
             \right.
\qquad\mbox{and}\qquad
f_{\alpha}(t)=\left\{
               \begin{array}{ll}
                 J_{\alpha}(t), & \hbox{$n=2$;} \\
                 j_{\alpha}(t), & \hbox{$n=3$}
               \end{array}
             \right.
\enn
with $J_{\alpha}$ and $j_{\alpha}$ being the Bessel functions and spherical Bessel functions of order $\alpha$, respectively.
\end{lemma}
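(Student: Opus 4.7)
The plan is to reduce both identities to one-dimensional integral representations of the (spherical) Bessel functions by exploiting the rotational symmetry of $S^{n-1}$.

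First I would handle the trivial cases. When $p=0$, the first integral equals the total surface measure of $S^{n-1}$, which is $2\pi=\mu_0$ in dimension $n=2$ and $4\pi=\mu_0$ in dimension $n=3$; since $f_0(0)=1$, this matches the claimed value. For the second integral with $p=0$, antipodal symmetry ($\hat{x}\mapsto -\hat{x}$) immediately yields $0$.

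Next, for $p\neq 0$, let $R$ be any orthogonal matrix mapping the north pole $e_n$ to $\hat{p}=p/|p|$. The substitution $\hat{x}\mapsto R\hat{x}$ preserves the surface measure, so
\[
\int_{S^{n-1}}e^{-ik\hat{x}\cdot p}\,ds(\hat{x})\;=\;\int_{S^{n-1}}e^{-ik|p|\,\hat{x}\cdot e_n}\,ds(\hat{x}),
\]
and analogously for the vector integral with the change $R\hat{x}$ pulled outside. For the vector case, rotational symmetry around the $\hat{p}$-axis further kills every component orthogonal to $\hat{p}$, so
\[
\int_{S^{n-1}}\hat{x}\,e^{-ik\hat{x}\cdot p}\,ds(\hat{x})\;=\;\hat{p}\int_{S^{n-1}}(\hat{x}\cdot e_n)\,e^{-ik|p|\,\hat{x}\cdot e_n}\,ds(\hat{x}).
\]
This reduces everything to computing the two scalar integrals $\int_{S^{n-1}} e^{-ik|p|\,\hat{x}\cdot e_n}\,ds$ and $\int_{S^{n-1}}(\hat{x}\cdot e_n)e^{-ik|p|\,\hat{x}\cdot e_n}\,ds$.

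Finally, I would parametrize $S^{n-1}$ in coordinates adapted to $e_n$ and identify each scalar integral with a classical Bessel representation. In three dimensions, writing $t=\hat{x}\cdot e_n\in[-1,1]$ and integrating out the azimuthal angle gives a factor $2\pi$, and the remaining $t$-integrals reproduce the standard formulas $j_0(z)=\frac{\sin z}{z}$ and $j_1(z)=\frac{\sin z}{z^2}-\frac{\cos z}{z}$ (the second after one integration by parts); multiplying by $2\pi$ yields exactly $\mu_0 j_0(k|p|)$ and $\mu_1 j_1(k|p|)$. In two dimensions, the substitution $\hat{x}=(\cos\theta,\sin\theta)$ turns the two scalar integrals into $\int_0^{2\pi}e^{-ik|p|\cos\theta}\,d\theta$ and $\int_0^{2\pi}\cos\theta\,e^{-ik|p|\cos\theta}\,d\theta$, which are the well-known integral representations of $2\pi J_0(k|p|)$ and $-2\pi i\,J_1(k|p|)$ respectively, matching $\mu_0 J_0(k|p|)$ and $\mu_1 J_1(k|p|)$. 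There is no real obstacle here; the only thing to watch is bookkeeping of the factors of $i$ in the constants $\mu_\alpha=\mu_\alpha(n)$, which is why it is cleanest to evaluate the one-dimensional integrals directly (by parts for the $\alpha=1$ case) rather than invoke Jacobi–Anger, so that the $i^{-\alpha}$ factor appears naturally in the final identification with $f_\alpha$.
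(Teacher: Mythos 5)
Your proof is correct, but it takes a genuinely different route from the paper's. The paper deduces both identities in one stroke from the Funk--Hecke formula $\int_{S^{n-1}}e^{-ikz\cdot\hat{x}}Y_\alpha^\beta(\hat{x})\,ds(\hat{x})=\mu_\alpha f_\alpha(k|z|)Y_\alpha^\beta(\hat{z})$, applied with $\alpha=0$ for the scalar identity and with $\alpha=1$ for the vector one, after writing the Cartesian components of $\hat{x}$ as explicit linear combinations of the first-order spherical harmonics $Y_1^{\pm 1}$ (and $Y_1^{0}$ when $n=3$). Your argument instead re-derives precisely the $\alpha=0,1$ instances of Funk--Hecke from scratch: rotate $\hat{p}$ to the pole, use axial symmetry to kill the components of the vector integral transverse to $\hat{p}$, and evaluate the resulting one-dimensional integrals against the classical representations of $J_0$, $J_1$, $j_0$, $j_1$. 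The computations you outline check out, including the bookkeeping of $i^{-\alpha}$ in $\mu_\alpha$: for $n=2$, $\int_0^{2\pi}\cos\theta\,e^{-i\kappa\cos\theta}\,d\theta=-2\pi i\,J_1(\kappa)=\mu_1 J_1(\kappa)$, and for $n=3$, $2\pi\int_{-1}^{1}t\,e^{-i\kappa t}\,dt=-4\pi i\,j_1(\kappa)=\mu_1 j_1(\kappa)$. What the paper's route buys is brevity and immediate extension to moments of any order $\alpha$; what yours buys is self-containedness, avoiding the quotation of Funk--Hecke and of the explicit order-one harmonics, at the cost of a case-by-case evaluation in each dimension.
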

\begin{proof}
The lemma follows by the well known Funk-Hecke formula
\ben\label{FunkHeckformula}
\int_{S^{n-1}}e^{-ikz\cdot\hx }Y_\alpha^\beta(\hx)ds(\hx)=\mu_{\alpha}f_{\alpha}(k|z|)Y_\alpha^\beta(\hat{z}),
\enn
and the fact that
\ben
\hx = (\hat{x}^1, \hat{x}^2, \cdots, \hat{x}^n)
    = \left\{
        \begin{array}{ll}
          \sqrt{\pi/2}\Big(Y_1^1(\hx)+Y_1^{-1}(\hx),\,\,iY_1^{-1}(\hx)-iY_1^{1}(\hx)\Big), & \hbox{$n=2$;} \\
          \sqrt{2\pi/3}\Big(Y_1^1(\hx)+Y_1^{-1}(\hx),\,\, iY_1^{-1}(\hx)-iY_1^{1}(\hx),\,\, \sqrt{2}Y_1^{0}(\hx)\Big), & \hbox{$n=3$.}
        \end{array}
      \right.
\enn
\end{proof}

\begin{figure}[htbp]
  \centering
  \subfigure[\textbf{Bessel function $J_0$}]{
    \includegraphics[width=3in]{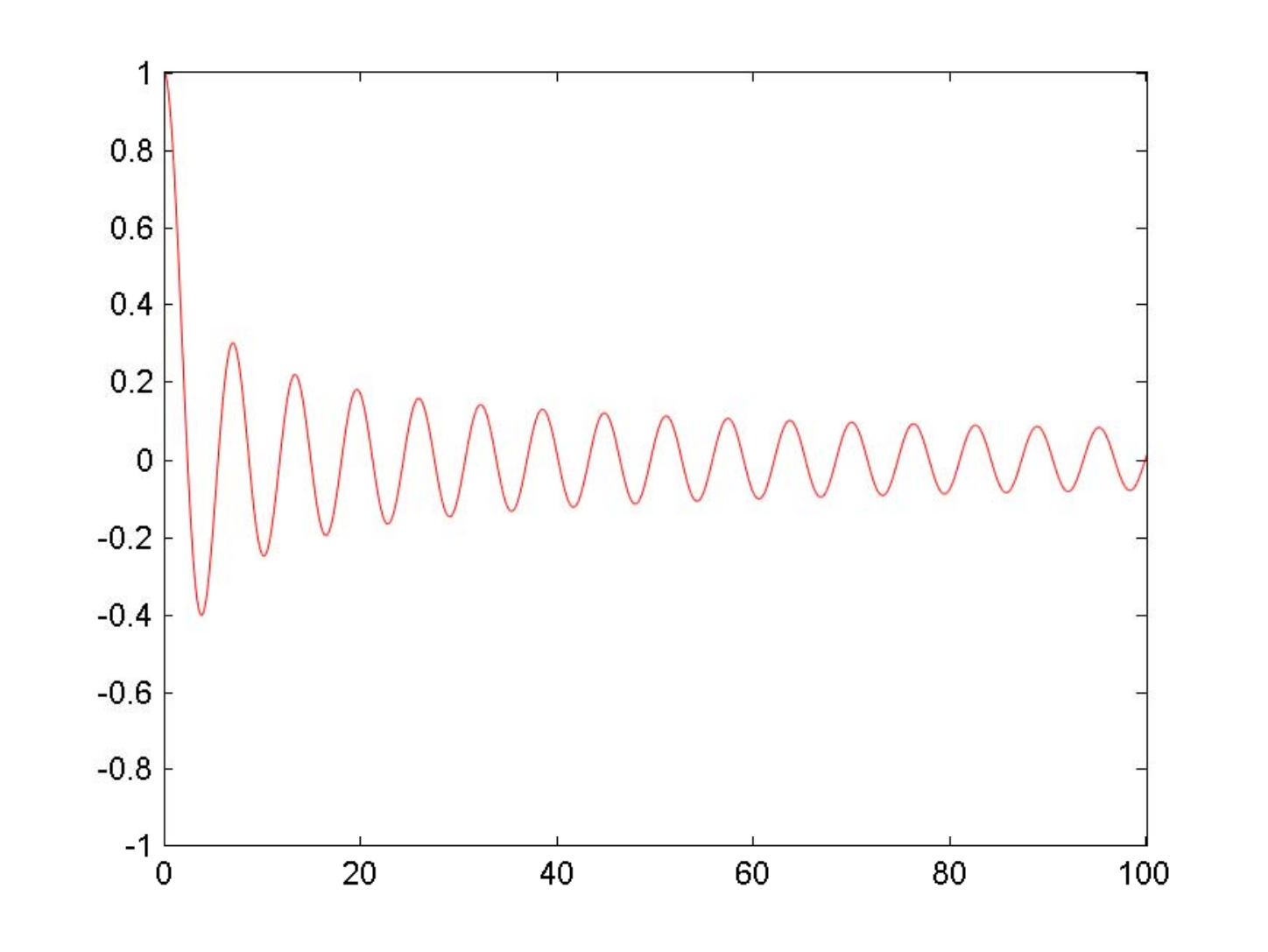}}
  \subfigure[\textbf{Bessel function $J_1$}]{
    \includegraphics[width=3in]{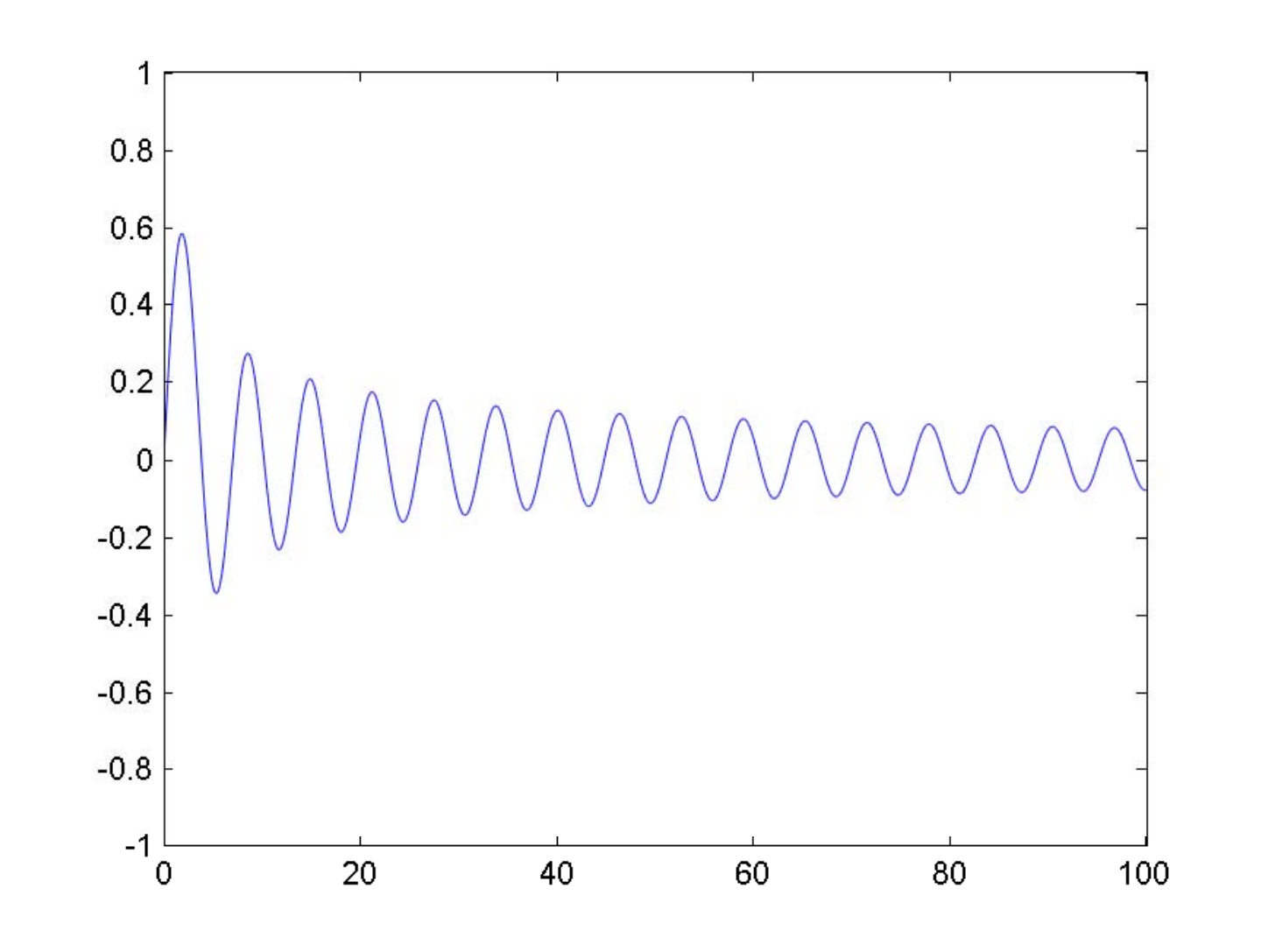}}
  \subfigure[\textbf{Spherical Bessel function $j_0$}]{
    \includegraphics[width=3in]{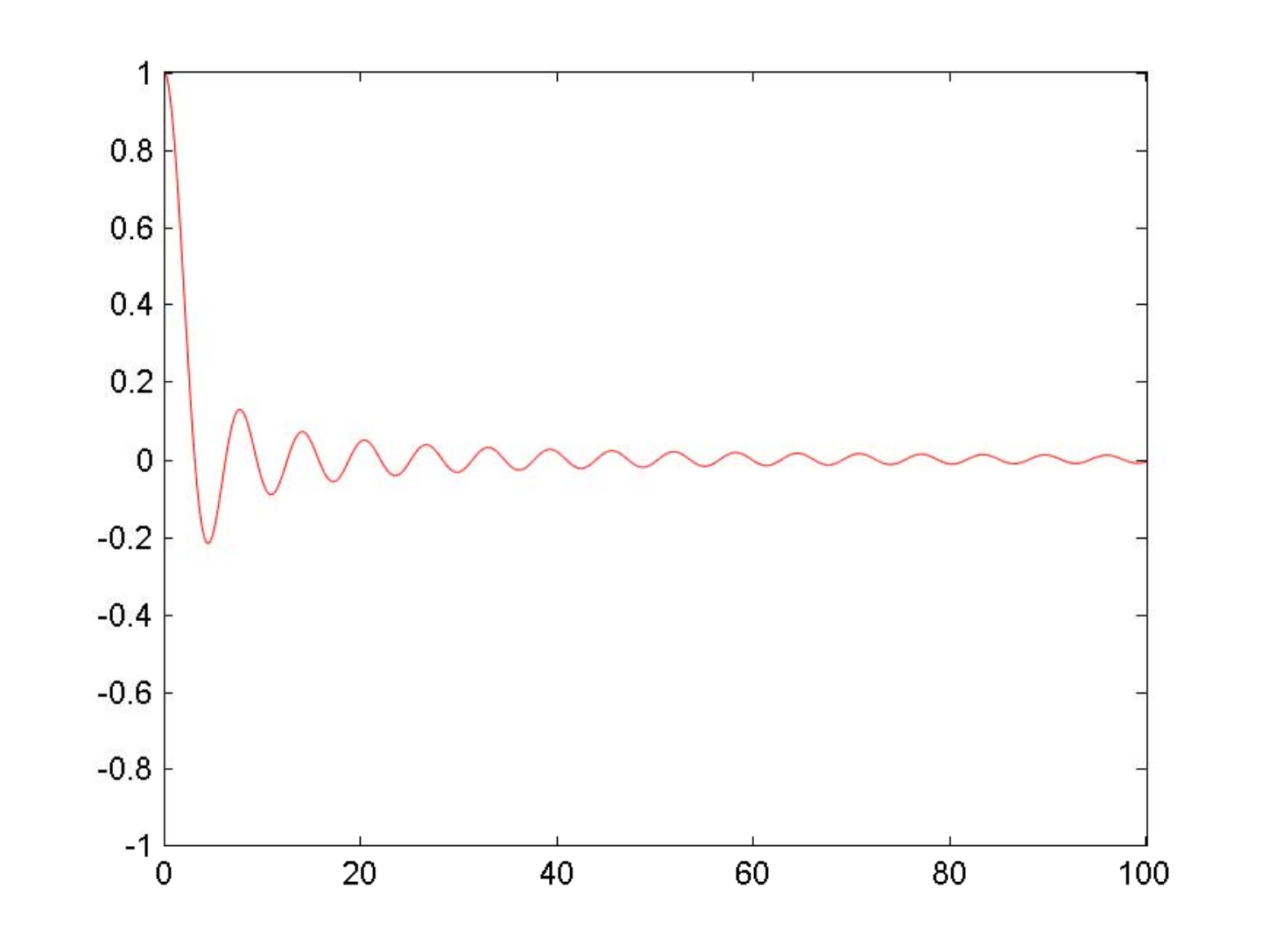}}
  \subfigure[\textbf{Spherical Bessel function $j_1$}]{
    \includegraphics[width=3in]{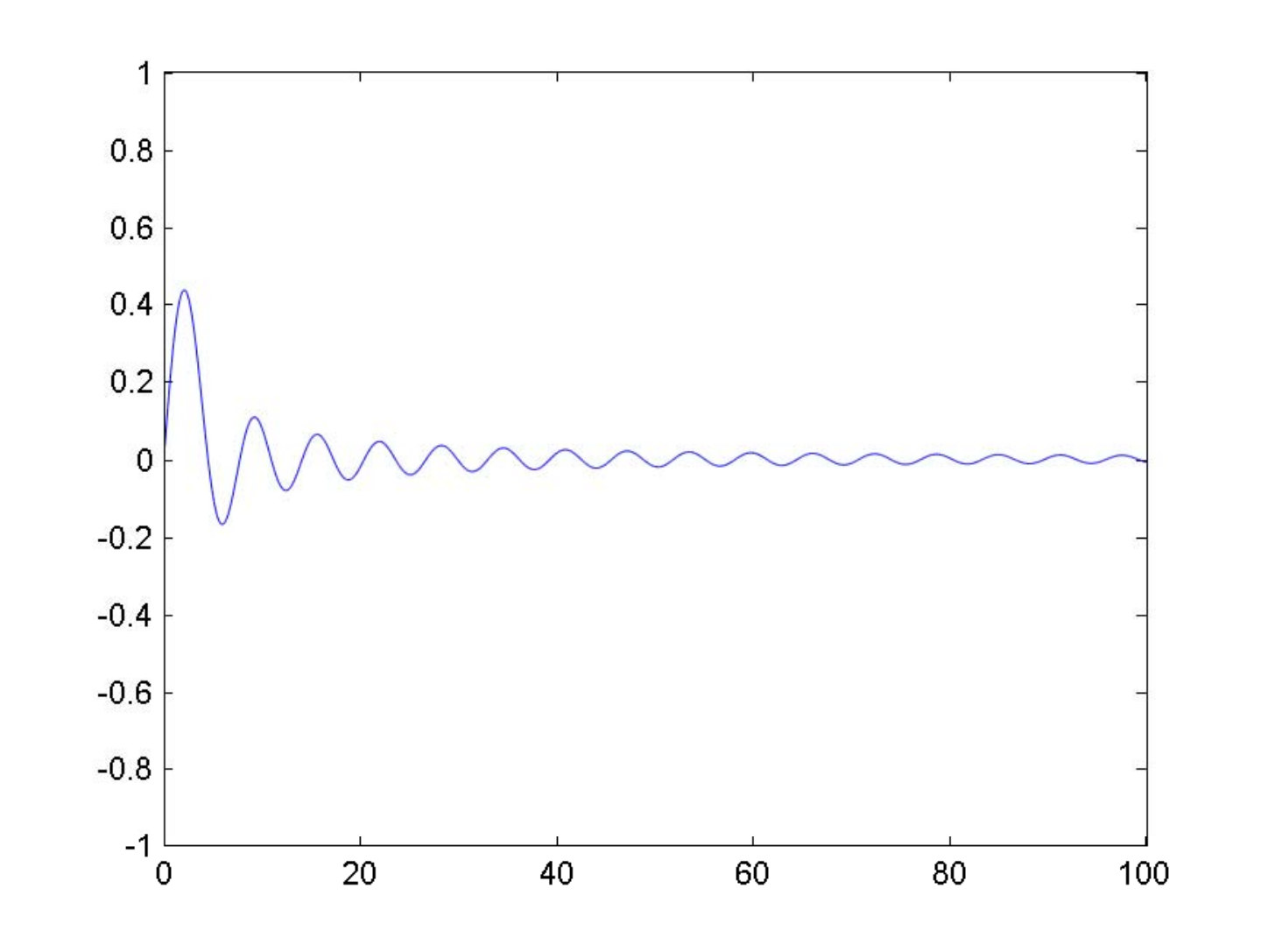}}
    \caption{Decay behaviors of $f_{\alpha}$ in two dimensions (a)-(b), and in three dimensions (c)-(d) for $\alpha=0,1$.}
\label{falphadecay}
\end{figure}

For both the scattering problems \eqref{HemEqumedium}-\eqref{Srcmedium} and \eqref{HemEquobstacle}-\eqref{Srcobstacle}, it is well known that the scattering amplitude
has the following form (cf. \cite{KirschGrinberg})
\ben
u^{\infty}(\hx,\hth)=\int_{\pa\Om}\left\{u^s(y,\hth)\frac{\pa e^{-ik\hx\cdot y}}{\pa\nu(y)}-\frac{\pa u^s}{\pa\nu}(y,\hth)e^{-ik\hx\cdot y}\right\}ds(y),\quad \hx\in S^{n-1}.
\enn
Inserting this into our indicator \eqref{IndicatorLiu} yields
\be\label{Inewz2}
&&I_{new}(z)\cr
&=&\Big|\int_{S^{n-1}}\int_{S^{n-1}}\int_{\pa\Om}\left\{u^s(y,\hth)\frac{\pa e^{-ik\hx\cdot (y-z)}}{\pa\nu(y)}-\frac{\pa u}{\pa\nu}(y,\hth)e^{-ik\hx\cdot (y-z)}\right\}ds(y)ds(\hx)e^{-ik\hth\cdot z}ds(\hth)\Big|\cr
&=&\Big|\int_{S^{n-1}}\int_{\pa\Om}\int_{S^{n-1}}\left\{-iku^s(y,\hth)\nu(y)\cdot\hx e^{-ik\hx\cdot (y-z)}-\frac{\pa u^s}{\pa\nu}(y,\hth)e^{-ik\hx\cdot (y-z)}\right\}ds(\hx)ds(y)\cr
&& \qquad\qquad\qquad\qquad  e^{-ik\hth\cdot z}ds(\hth)\Big|\cr
&:=&\Big|\int_{S^{n-1}}G(z, \hth)e^{-ik\hth\cdot z}ds(\hth)\Big|
\en
with
\ben
G(z,\hth)&:=&\int_{\pa\Om}\Big\{-iku^s(y,\hth)\nu(y)\cdot \int_{S^{n-1}}\hx e^{-ik\hx\cdot (y-z)}ds(\hx)\cr
&&  \qquad\quad       -\frac{\pa u^s}{\pa\nu}(y,\hth)\int_{S^{n-1}}e^{-ik\hx\cdot (y-z)}ds(\hx)\Big\}ds(y).
\enn
By the well known Riemann-Lebesgue Lemma, we obtain that
\be\label{Inewz2}
I_{new}(z)\rightarrow 0\quad\mbox{as}\,|z|\rightarrow\infty.
\en
From Lemma \ref{lemma_funkheck} we deduce that
\ben
G(z,\hth)=\int_{\pa\Om}\left\{-ik\mu_1u^s(y,\hth)\nu(y)\cdot\frac{y-z}{|y-z|}f_1(k|y-z|)-\mu_0\frac{\pa u^s}{\pa\nu}(y,\hth)f_0(k|y-z|)\right\}ds(y).
\enn
This means that $G(z,\hth)$ and thus also $I_{new}(z)$ are superpositions of the Bessel functions $f_0$ and $f_1$.
For large argument, we have the following asymptotic formulas for the Bessel and spherical Bessel functions
\ben
j_0(t)&=&\frac{\sin t}{t}\left\{1+O\Big(\frac{1}{t}\Big)\right\},\quad t\rightarrow\infty,\cr
j_1(t)&=&\frac{\cos t}{t}\left\{-1+O\Big(\frac{1}{t}\Big)\right\},\quad t\rightarrow\infty,\cr
J_0(t)&=&\frac{\cos t+\sin t}{\sqrt{\pi t}}\left\{1+O\Big(\frac{1}{t}\Big)\right\},\quad t\rightarrow\infty,\cr
J_1(t)&=&\frac{\cos t-\sin t}{\sqrt{\pi t}}\left\{-1+O\Big(\frac{1}{t}\Big)\right\},\quad t\rightarrow\infty.
\enn
We refer the readers to Figure \ref{falphadecay} for a visual display of the behavior of these four functions.
Thus, we expect that the $I_{new}(z)$ decays as the sampling point $z$ goes away from the boundary $\pa\Om$.
Actually, this phenomenon have been verified in a lot of the numerical simulations, see Section 3 in this paper.

We end this subsection by a stability statement, which reflects an important feature of the reconstruction scheme under consideration.
\begin{theorem}{\rm (Stability statement).}\label{stablity}
\be\label{stabilityestimate}
I_{new}(z)-I_{new}^{\delta}(z)\leq c\|u^{\infty}-u_{\delta}^{\infty}\|_{L^2(S^{n-1}\times S^{n-1})},\quad z\in\R^n.
\en
where $I_{new}^{\delta}(z)$ is the indicator functional with $u^{\infty}$ replaced by $u_{\delta}^{\infty}$, $c$ is a constant independent of sampling point $z$.
\end{theorem}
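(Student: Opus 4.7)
The plan is to exploit the compact bilinear structure $I_{new}(z)=|(F\phi_z,\phi_z)|$ together with the fact that the difference $F-F_\delta$ is an integral operator whose kernel is exactly $u^\infty-u_\delta^\infty$. The estimate should therefore reduce, after Cauchy--Schwarz, to a bound of the Hilbert--Schmidt norm of $F-F_\delta$, and of the $L^2(S^{n-1})$-norm of the test function $\phi_z$, both of which are completely explicit.

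First I would apply the reverse triangle inequality to replace the left-hand side of \eqref{stabilityestimate} by
\[
\bigl|\,I_{new}(z)-I_{new}^\delta(z)\,\bigr|\;\le\;\bigl|(F\phi_z,\phi_z)-(F_\delta\phi_z,\phi_z)\bigr|\;=\;\bigl|((F-F_\delta)\phi_z,\phi_z)\bigr|,
\]
where $F_\delta$ is the far field operator associated to the noisy data $u_\delta^\infty$. A direct application of Cauchy--Schwarz in $L^2(S^{n-1})$ then yields
\[
\bigl|((F-F_\delta)\phi_z,\phi_z)\bigr|\;\le\;\|(F-F_\delta)\phi_z\|_{L^2(S^{n-1})}\,\|\phi_z\|_{L^2(S^{n-1})}.
\]

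Next I would observe that $\|\phi_z\|^2_{L^2(S^{n-1})}=\gamma$ by the same computation \eqref{gama} already used in the previous subsection, so this factor is a constant depending only on the space dimension. For the remaining factor, I would use the fact that $F-F_\delta$ is an integral operator with kernel $u^\infty-u_\delta^\infty\in L^2(S^{n-1}\times S^{n-1})$, so its operator norm is dominated by its Hilbert--Schmidt norm
\[
\|F-F_\delta\|_{L^2(S^{n-1})\to L^2(S^{n-1})}\;\le\;\|u^\infty-u_\delta^\infty\|_{L^2(S^{n-1}\times S^{n-1})}.
\]
Combining the three bounds gives \eqref{stabilityestimate} with $c=\gamma$, which is independent of $z$.

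There is really no substantive obstacle here: the whole argument is a one-line Cauchy--Schwarz estimate together with the Hilbert--Schmidt bound. The only points that deserve a word of care are (i) the use of the reverse triangle inequality at the very beginning to pass from absolute values to the inner product itself, and (ii) the explicit identification of $\|\phi_z\|_{L^2(S^{n-1})}$ as independent of $z$, which is what ultimately guarantees that the stability constant $c$ does not depend on the sampling point. This uniformity in $z$ is the content of the theorem and is precisely what makes the indicator $I_{new}$ extremely robust to measurement noise.
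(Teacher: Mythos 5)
Your proposal is correct and follows essentially the same route as the paper: the reverse triangle inequality to pass to the difference kernel, followed by Cauchy--Schwarz, yielding the constant $c=\gamma$ independent of $z$. Your detour through the Hilbert--Schmidt bound for $F-F_\delta$ is just a repackaging of the paper's single Cauchy--Schwarz estimate on $L^2(S^{n-1}\times S^{n-1})$ applied directly to the double integral.
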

\begin{proof}
\ben
&&I_{new}(z)-I_{new}^{\delta}(z)\cr
&:=&\Big|\int_{S^{n-1}}e^{-ik\hth\cdot z}\int_{S^{n-1}}u^\infty(\hx,\hth)e^{ik\hx\cdot z}ds(\hx)ds(\hth)\Big|\cr
&&-\Big|\int_{S^{n-1}}e^{-ik\hth\cdot z}\int_{S^{n-1}}u_{\delta}^\infty(\hx,\hth)e^{ik\hx\cdot z}ds(\hx)ds(\hth)\Big|\cr\vspace{2cm}
&\leq&\Big|\int_{S^{n-1}}e^{-ik\hth\cdot z}\int_{S^{n-1}}[u^\infty(\hx,\hth)-u_{\delta}^\infty(\hx,\hth)]e^{ik\hx\cdot z}ds(\hx)ds(\hth)\Big|\cr
&\leq&c\|u^\infty-u_{\delta}^\infty\|_{L^{2}(S^{n-1}\times S^{n-1})}.
\enn
where we have used the Triangle Inequality in first inequality and the Cauchy-Schwarz Inequality in the second inequality.
\end{proof}

\subsection{Comparisons with other sampling methods}
In this subsection, we will study some connections with some other sampling methods. We mainly consider the Orthogonality Sampling Method proposed by Potthast in \cite{Potthast2010}
and the Reverse Time Migration proposed by Chen et.al. in \cite{CCHuang}.

In 2010, Potthast proposed a direct sampling method in \cite{Potthast2010} which he called Orthogonality Sampling based on the indicator
\be\label{IndicatorPotthast1}
I_{small}(z,\hth):=\Big|\int_{S^{n-1}}u^\infty(\hx,\hth)e^{ik\hx\cdot z}ds(\hx)\Big|,\quad z\in\R^n.
\en
This indicator actually is an evaluation of the modulus of the $L^2(S^{n-1})$ inner product of the far field measurements $( u^\infty(\hx,d)$ and suitably chosen function
$e^{-ik\hx\cdot z}$,
thus is very easy and simple for numerical simulation.
For small objects (whose size are much smaller than the wavelength), theoretical analysis shows that the indicator behaves like the Bessel functions.
Numerical examples (see also \cite{LiZou}) show the feasibility and effectiveness of the indicator $I_{small}$ for location reconstruction, in particular for small objects.
However, $I_{small}$ does not work for shape reconstruction, at least for extended scatterers.
To solve this difficulty, Potthast suggested in \cite{Potthast2010} the superpositions of $I_{small}$
with respect to the all the incident directions, i.e., by using the indicator
\be\label{IndicatorPotthast2}
I^{(\rho)}_{OSM}(z)
&:=&\int_{S^{n-1}} [I_{small}(z,\hth)]^{\rho} ds(\hth)\cr
&=&\int_{S^{n-1}}\Big|\int_{S^{n-1}}u^\infty(\hx,\hth)e^{ik\hx\cdot z}ds(\hx)\Big|^{\rho} ds(\hth),\quad z\in\R^n,
\en
with $\rho=1$ or $\rho=2$.
Numerical examples show that it is indeed a good indicator for shape reconstruction for extended scatterers. However, no theoretical analysis is established for the behavior
of $I^{(\rho)}_{OSM}$.
As pointed out by Potthast in \cite{Potthast2010}, the modulus as used in \eqref{IndicatorPotthast2} before further integral is very important for reconstruction. Recall our
indicator \eqref{IndicatorLiu2}, that is,
\be\label{IndicatorLiu2}
I_{new}(z):=\Big|\int_{S^{n-1}}e^{-ik\hth\cdot z}\int_{S^{n-1}}u^\infty(\hx,\hth)e^{ik\hx\cdot z}ds(\hx)ds(\hth)\Big|,\quad z\in\R^n,
\en
where the modulus is taken after the intergral but a weight function $e^{-ik\hth\cdot z}$ is used in the integral w.r.t. $\hth\in S^{n-1}$.
Another work close to our method is the Reverse Time Migration method which has been studied by Chen et.al. in \cite{CCHuang}.
The authors considered the point source as the incident field and the scattered field as the measurement. Based on similar arguments,
for the case of incident plane waves and far field measurements, one may consider the following indicator functional
\be\label{RTM}
I_{RTM}(z):=\Im\int_{S^{n-1}}e^{-ik\hth\cdot z}\int_{S^{n-1}}u^\infty(\hx,\hth)e^{ik\hx\cdot z}ds(\hx)ds(\hth),\quad z\in\R^n.
\en

In the following, we will show that
\be\label{IndicatorsRelations}
c I^{2}_{OSM}(z)\leq I_{RTM}(z)\leq I_{new}(z) \leq C \sqrt{I^{2}_{OSM}(z)},
\en
for two positive constants $c$ and $C$ which are independent of the sampling point $z\in \R^n$.
This implies these indicators  $I^{2}_{OSM}(z)$ in \eqref{IndicatorPotthast2},  $I_{new}(z)$ in \eqref{IndicatorLiu2} and
$I_{RTM}(z)$ in \eqref{RTM} are equivalent in some sense.

To show \eqref{IndicatorsRelations}, we recall the reciprocity relation of the scattering amplitudes and some properties of the far field operator $F$ given by \eqref{ffoperator}.

\begin{lemma}\label{Fproperties}
(1) The scattering amplitude satisfies the reciprocity relation
\be\label{reciprocityrelation}
u^{\infty}(\hx,\hth)=u^{\infty}(-\hth, -\hx)\quad\forall\, \hx,\hth\in S^{n-1}.
\en\\
(2) The far field operator satisfies
\be\label{FFastR}
F-F^{\ast}-\frac{i}{4\pi}\Big(\frac{k}{2\pi}\Big)^{n-2}F^{\ast}F=2iR,
\en
where $F^{\ast}$ denotes the $L^2-$adjoint of $F$ and $R: L^2(S^{n-1})\rightarrow L^2(S^{n-1})$ is some self-adjoint non-negative operator.
The operator $R$ vanishes for the cases of Dirichlet or Neumann boundary conditions.
For the impedance boundary conditions, the operator $R$ is given by
\be\label{Rimp}
(Rh)(\hx):=\int_{S^{n-1}}\Big(\int_{\pa\Om}\Im(\la)u(y,\hth)\ov{u(y,\hx)}ds(y)\Big)h(\hth)ds(\hth),\quad \hx\in S^{n-1}.
\en
For the case of inhomogeneous medium, the operator $R$ is given by
\be\label{Rmed}
(Rh)(\hx):=\int_{S^{n-1}}\Big(\int_{\Om}k^2\Im(q)u(y,\hth)\ov{u(y,\hx)}dy\Big)h(\hth)ds(\hth),\quad \hx\in S^{n-1},
\en
where $u(\cdot,\hth)$ is the total field in $\Om$ corresponding to the incident plane wave $u^i(\cdot,\hth)$ with incident direction $\hth$.
Clearly, the operator $R$ also vanishes for the cases of $\Im(\la)=0$ or $\Im(q)=0$.
\end{lemma}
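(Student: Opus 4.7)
My plan is to establish all three parts as consequences of Green's second identity applied to appropriate pairs of scattering solutions on a large ball $B_R\supset\ov{\Om}$, combined with the far-field expansion \eqref{0asyrep} and the Sommerfeld radiation condition; I would take them in the order (1), (2), (3). For (1), I apply Green's identity to the two total fields $u(\cdot,\hth)$ and $u(\cdot,-\hx)$. In the medium case both satisfy $\Delta u+k^2(1+q)u=0$ throughout $B_R$, so the volume term vanishes and only a surface integral on $\pa B_R$ remains. In the obstacle case I apply the identity in $B_R\ba\ov{\Om}$ and check that the boundary terms on $\pa\Om$ cancel: trivially for Dirichlet and Neumann, and via the symmetric form $\int_{\pa\Om}\la\,u(y,\hth)u(y,-\hx)\,ds(y)$ for the impedance condition. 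Decomposing $u=u^i+u^s$ on $\pa B_R$, the incident-incident cross vanishes by Green's theorem on two entire Helmholtz solutions, and the scattered-scattered cross vanishes as $R\to\infty$ by the standard cross-radiation lemma. The two remaining mixed terms, upon substituting \eqref{0asyrep} and letting $R\to\infty$, collapse to the same constant multiple of $u^\infty(\hx,\hth)$ and of $u^\infty(-\hth,-\hx)$ respectively; equating them gives \eqref{reciprocityrelation}.

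For (2), I run the same machine on the conjugate pair $u(\cdot,\hth)$ and $\ov{u(\cdot,\hx)}$; this is the analogue of the optical theorem. Since $\ov u$ satisfies $\Delta\ov u+k^2(1+\ov q)\ov u=0$, the volume term no longer vanishes but yields $-2ik^2\int_{\Om}\Im(q)\,u(y,\hth)\ov{u(y,\hx)}\,dy$, which after pairing against $g(\hth)\ov{h(\hx)}$ and applying Fubini is exactly $-2i(Rg,h)_{L^2(S^{n-1})}$ with $R$ given by \eqref{Rmed}. In the obstacle case, substituting $\pa u/\pa\nu=-\la u$ into the boundary term on $\pa\Om$ reproduces \eqref{Rimp}, and this term vanishes outright under Dirichlet or Neumann conditions. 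On $\pa B_R$, the decomposition $u=u^i+u^s$ produces four pieces: the $u^i$-$u^i$ cross vanishes by Green's theorem, the $u^s$-$u^s$ cross (using \eqref{0asyrep} together with $\pa_r=ik+o(1)$ for radiating solutions) tends to $2ik|C_n|^2$ times the kernel of $F^{\ast}F$, where $C_n$ is the prefactor in \eqref{0asyrep}, and the two mixed crosses reduce via the standard far-field integral-representation formula to $(Fg,h)-\ov{(Fh,g)}=((F-F^{\ast})g,h)$. Equating these with the volume/boundary right-hand side gives \eqref{FFastR}; a short computation shows $|C_n|^2=\tfrac{1}{8\pi k}\bigl(\tfrac{k}{2\pi}\bigr)^{n-2}$ in both $n=2,3$, producing exactly the constant $\tfrac{i}{4\pi}\bigl(\tfrac{k}{2\pi}\bigr)^{n-2}$ in front of $F^{\ast}F$.

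Part (3) is then read off directly from the volume/boundary right-hand side computed in (2), since the pairing against test functions unfolds exactly to the operators defined by \eqref{Rimp} and \eqref{Rmed}; self-adjointness and non-negativity of $R$ follow from the symmetric form of its kernel together with $\Im(\la)\geq 0$ and $\Im(q)\geq 0$. The main technical obstacle is purely bookkeeping: carefully tracking the four cross-integrals in (2) and the prefactors relating $\int_{\pa B_R}$ to the far-field pattern as $R\to\infty$, since these differ between $n=2$ and $n=3$ by exactly the factor $\bigl(k/2\pi\bigr)^{n-2}$ coming from the Hankel function asymptotics. No genuinely new idea is required beyond Green's identity and the Sommerfeld radiation condition.
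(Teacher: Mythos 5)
Your outline is correct, but it is worth noting that the paper does not actually prove this lemma at all: its ``proof'' consists of citing Theorems 1.8, 2.5 and 4.4 of Kirsch--Grinberg (and Theorem 2.1 of Kirsch--Liu) for the three-dimensional case and asserting that the two-dimensional case follows by the same arguments with modified coefficients. What you have written is essentially a faithful reconstruction of those cited proofs: Green's second identity on $B_R\ba\ov{\Om}$ (or $B_R$ for the medium), cancellation of the $\pa\Om$ terms under the three boundary conditions, the four-way decomposition $u=u^i+u^s$ on $\pa B_R$, and the radiation condition plus the far-field asymptotics \eqref{0asyrep} to evaluate the surviving crosses. Your bookkeeping of the normalization is right: with the prefactor $C_n$ from \eqref{0asyrep} one has $|C_n|^2=\tfrac{1}{8\pi k}(k/2\pi)^{n-2}$ in both dimensions, which is exactly what converts the Kirsch--Grinberg identity into the constant $\tfrac{i}{4\pi}(k/2\pi)^{n-2}$ appearing in \eqref{FFastR}, and your identification of the volume term $2ik^2\int_\Om\Im(q)\,u_1\ov{u_2}$ and the boundary term $2i\int_{\pa\Om}\Im(\la)\,u_1\ov{u_2}$ with the kernels \eqref{Rmed} and \eqref{Rimp}, together with the positivity argument $(Rh,h)=\int\Im(\la)|v_h|^2\geq 0$, is the standard one. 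The only blemishes are cosmetic: you refer to a ``part (3)'' that does not exist in the statement (the formulas for $R$ are part of (2)), and a couple of your intermediate signs are stated loosely (e.g.\ the sign of the $u^s$--$u^s$ cross and of the volume term depend on the orientation convention in Green's identity); these must be tracked consistently so that $R$ comes out non-negative rather than non-positive, but this is routine and does not affect the validity of the argument. In short, your route is not different from the paper's so much as it supplies the proof the paper delegates to the literature.
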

\begin{proof}
The proof of three-dimensional case can be found in Theorems 1.8, 2.5 and 4.4 of \cite{KirschGrinberg} (see also Theorem 2.1 in \cite{KirschLiuMMAS}).
By literally the same proofs, one can show the two-dimensional case with suitably modified coefficient. Thus we omit the proof here.
\end{proof}

By interchanging the roles of $\hx$ and $\hth$, with the help of the reciprocity relation \eqref{reciprocityrelation}, we have
\be\label{Fphiz2}
I^{(2)}_{OSM}(z)
&=&\int_{S^{n-1}}\Big|\int_{S^{n-1}}u^{\infty}(\hx,\hth)e^{ikz\cdot\hx}ds(\hx)\Big|^2ds(\hth)\cr
&=&\int_{S^{n-1}}\Big|\int_{S^{n-1}}u^{\infty}(-\hth,-\hx)e^{ikz\cdot\hx}ds(\hx)\Big|^2ds(\hth)\cr
&=&\int_{S^{n-1}}\Big|\int_{S^{n-1}}u^{\infty}(\hth,\hx)e^{-ikz\cdot\hx}ds(\hx)\Big|^2ds(\hth)\cr
&=&\int_{S^{n-1}}\Big|\int_{S^{n-1}}u^{\infty}(\hx,\hth)e^{-ikz\cdot\hth}ds(\hth)\Big|^2ds(\hx)\cr
&=&\|F\phi_z\|^2_{L^2(S^{n-1})}.
\en

By using the operator identity \eqref{FFastR}, we have
\ben
\Im(Fg, g)=\frac{1}{8\pi}\Big(\frac{k}{2\pi}\Big)^{n-2}(Fg,Fg)+(Rg,g),\quad \forall\,\, g\in L^{2}(S^{n-1}).
\enn
In particular, taking $g=\phi_z$ yields that
\be\label{lowb}
|(F\phi_z,\phi_z)|
&\geq& \Im(F\phi_z, \phi_z)\cr
&=& \frac{1}{8\pi}\Big(\frac{k}{2\pi}\Big)^{n-2}\|F\phi_z\|^2_{L^2(S^{n-1})}+(R\phi_z,\phi_z)\cr
&\geq& \frac{1}{8\pi}\Big(\frac{k}{2\pi}\Big)^{n-2}\|F\phi_z\|^2_{L^2(S^{n-1})},
\en
where we have used the fact that $R$ is an non-negative operator.
On the other hand, by using the Cauchy-Schwartz inequality, we have
\be\label{upb}
|(F\phi_z,\phi_z)|^2
\leq \|F\phi_z\|^2_{L^2(S^{n-1})}\|\phi_z\|^2_{L^2(S^{n-1})}
= 2^{n-1}\pi\|F\phi_z\|^2_{L^2(S^{n-1})}.
\en
Combing the previous two inequalities \eqref{lowb} and \eqref{upb} yields
\be
\frac{1}{8\pi}\Big(\frac{k}{2\pi}\Big)^{n-2} \|F\phi_z\|^2_{L^2(S^{n-1})}\leq \Im(F\phi_z, \phi_z)\leq |(F\phi_z,\phi_z)| \leq \sqrt{\pi}2^{\frac{n-1}{2}}\|F\phi_z\|_{L^2(S^{n-1})},
\en
which actually implies \eqref{IndicatorsRelations} with $c=\frac{1}{8\pi}\Big(\frac{k}{2\pi}\Big)^{n-2}$ and $C=\sqrt{\pi}2^{\frac{n-1}{2}}$.

We have established the theory foundation of the reconstruction scheme by using the indicator $I_{new}$ in the previous subsections.
Thus similar results can be shown for the indicators $I^{(2)}_{OSM}$ in Orthogonal Sampling method and $I_{RTM}$ in Reverse Time Migration method.

Finally, we want to remark that similar indicators also been proposed in \cite{BaoHuangLiZhao} based on the idea of the MUSIC algorithm and in
\cite{BellisBonnetCakoni} by using the topological derivative of far field measurements-based $L^{2}$ cost functionals. The contributions of our paper
are new theory basis of such kind sampling methods and extensive numerical experiments shown in the next section.

\section{Numerical examples and discussions}
\label{sec3}
\setcounter{equation}{0}

Now we turn to present a variety of numerical examples in two dimensions to illustrate the applicability and effectiveness of our sampling method.
All the programs in our experiments are written in Matlab and run on a Core i5-5200U 2.2GHz PC.

There are totally seven groups of numerical tests to be considered, and they are
respectively referred to as {\bf Comparison, Dirichlet, OtherPhyPro, MixedType, MultiScalar, ResolutionLimit}  and {\bf HighResolution}.
The boundaries of the scatterers used in our numerical experiments are parameterized as follows:
\be
\label{circle}&\mbox{\rm Circle:}&\quad x(t)\ =(a,b)+r\ (\cos t, \sin t),\quad 0\leq t\leq2\pi,\\
\label{peanut}&\mbox{\rm Peanut:}&\quad x(t)\ =(a,b)+\ \sqrt{3\cos^2 t+1}(\cos t, \sin t),\quad 0\leq t\leq2\pi,\\
\label{pear}&\mbox{\rm Pear:}&\quad x(t)\ =(a,b)+(2+0.3\cos 3t)\ (\cos t, \sin t),\quad 0\leq t\leq2\pi,\\
\label{kite}&\mbox{\rm Kite:}&\quad x(t)\ =(a,b)+\ (\cos t+0.65\cos 2t-0.65, 1.5\sin t),\quad 0\leq t\leq2\pi,
\en
with $(a,b)$ be the location of the scatter which may be different in different examples.

In our simulations, we used the boundary integral equation method to compute the scattering amplitudes $u_\Om^\infty(\theta_j, \theta_l)$  with $\theta_j = 2\pi j/N$,
for $N$ equidistantly distributed incident directions and $N$ observation directions.
These data are then stored in the matrices $F_\Om \in \C^{N \times N}$.
We further perturb
$F_\Om$ by random noise using
\ben
F_{\Om}^{\delta}\ =\ F_{\Om} +\delta\|F_{\Om}\|\frac{R_1+R_2 i}{\|R_1+R_2 i\|},
\enn
where $R_1$ and $R_2$ are two $N \times N$ matrixes containing pseudo-random values
drawn from a normal distribution with mean zero and standard deviation one. The
value of $\delta$ used in our code is $\delta:=\|F_{\Om}^{\delta} -F_{\Om}\|/\|F_{\Om}\|$ and so presents the relative error.

In the simulations, we used a grid $\mathcal{G}$ of $M\times M$ equally spaced sampling points on some rectangle $[-c,c]\times[-c,c]$.
For each point $z \in \mathcal{G}$, we define the indicator function
\ben
W(z)\ :=|\phi_z^{\ast}F_{\Om}^{\delta}\phi_z|^{\rho},
\enn
where $\phi_z=(e^{-ik\theta_1\cdot z}, e^{-ik\theta_2 \cdot z},\dots, e^{-ik\theta_{N}\cdot z})^\top \in \C^{N}$.
Clearly, the indicator is independent of any a priori information of the unknown scatterers. \\

{\bf Example Comparison.}
This example is designed to compare the reconstructions by using different sampling methods. We set the wave number $k=5$, $\rho=1$ and consider a sound soft kite shaped domain.
The research domain is $[-4,4]\times[-4,4]$ with $151\times 151$ equally spaced sampling points.
We take $N=64$, i.e., the scattering amplitude is collected in 64 observation directions and 64 incident directions.
Figure \ref{comparison} show the reconstruction by using the Factorization method, Orthogonal Sampling method, Reverse Time Migration method and Our novel method.
Clearly, the reconstructions are comparable to each other.
Different to the classical Factorization method, in the later three sampling methods, the indicator decays like the bessel functions as the sampling points away from
the boundary. This verifies our theory analyses and can be observed in later examples.
\begin{figure}[htbp]
  \centering
\subfigure[\textbf{Factorization method}]{
    \includegraphics[width=3in]{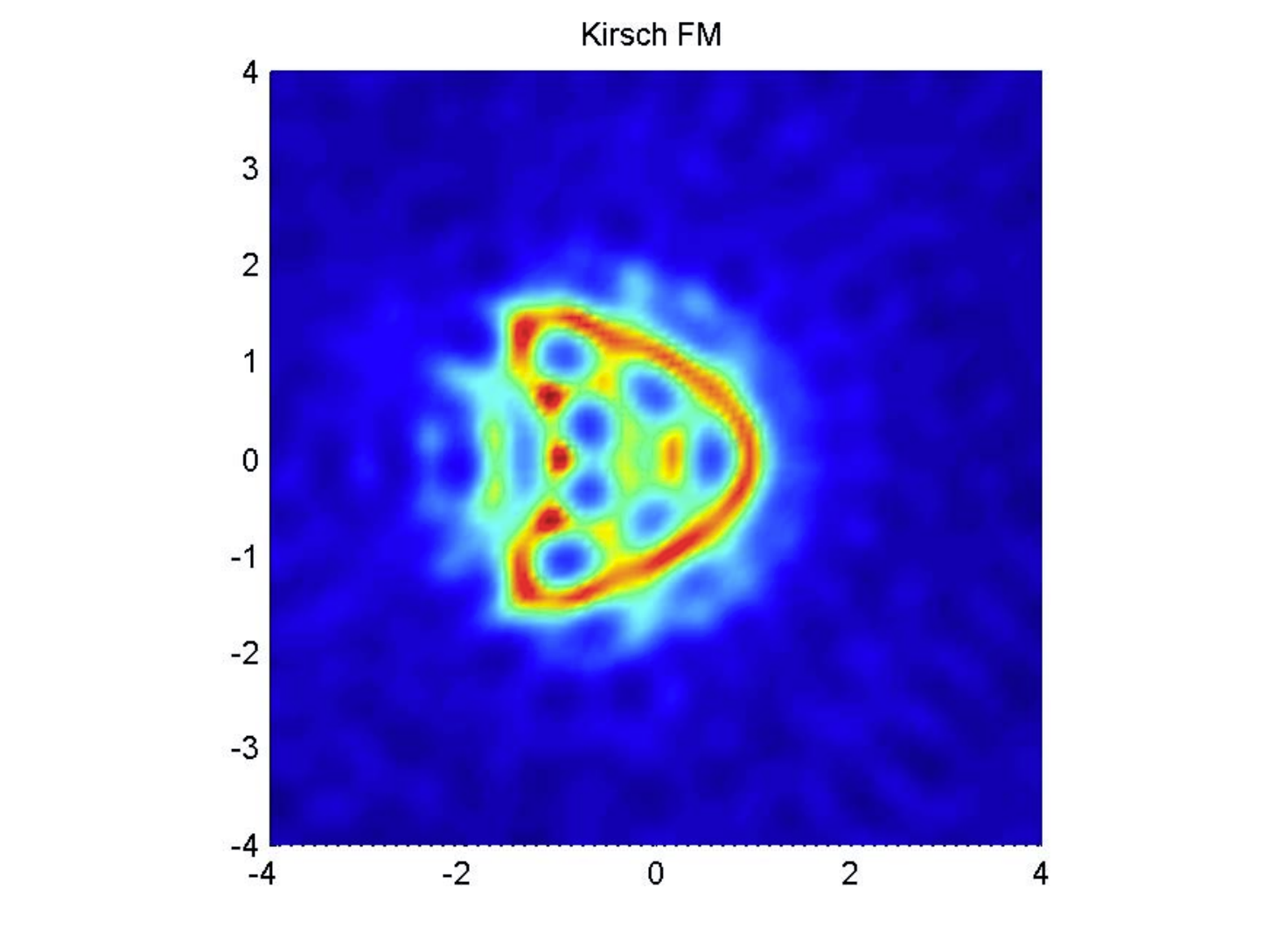}}
  \subfigure[\textbf{Orthogonal Sampling method}]{
    \includegraphics[width=3in]{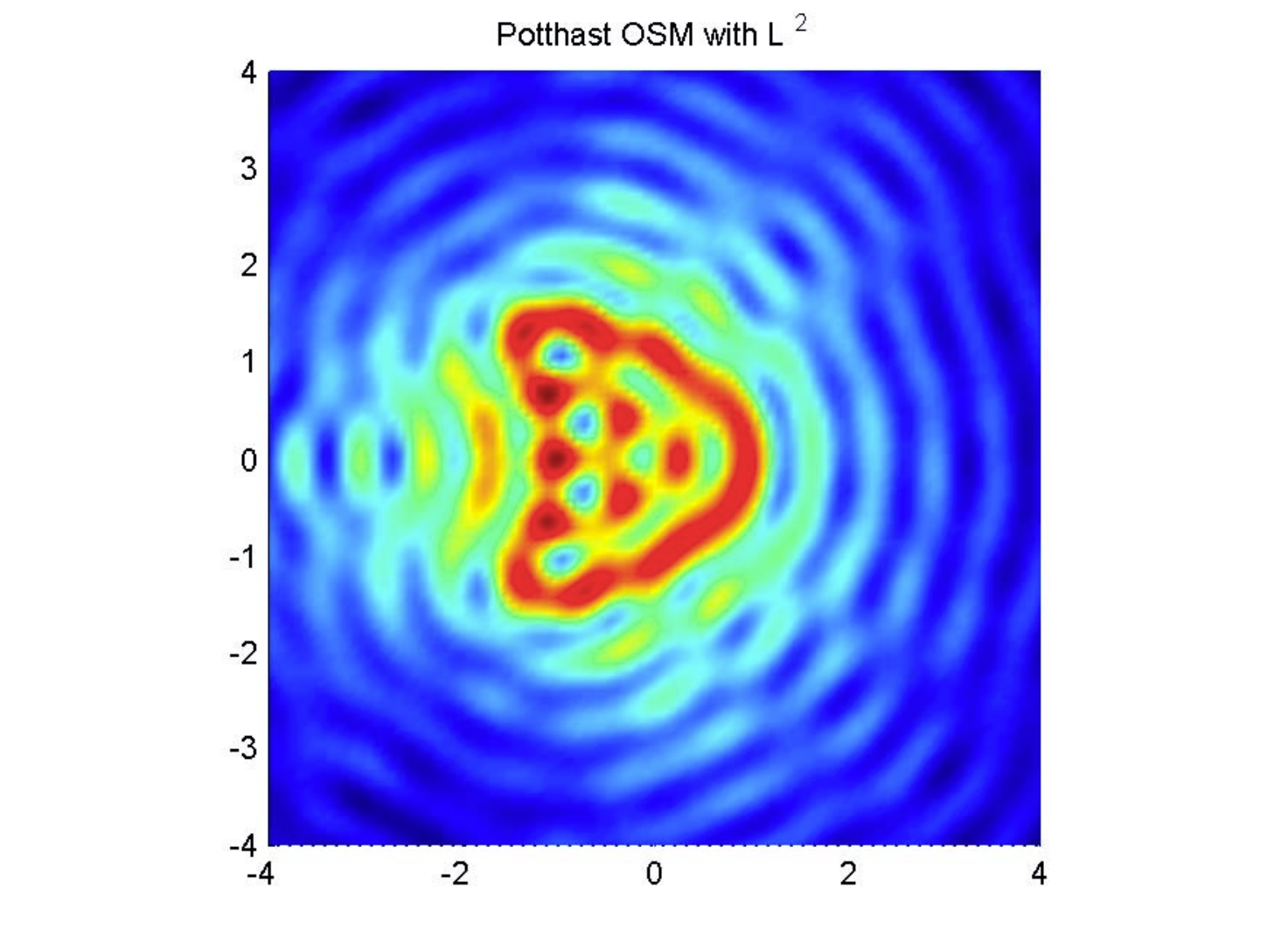}}
  \subfigure[\textbf{Reverse Time Migration method}]{
    \includegraphics[width=3in]{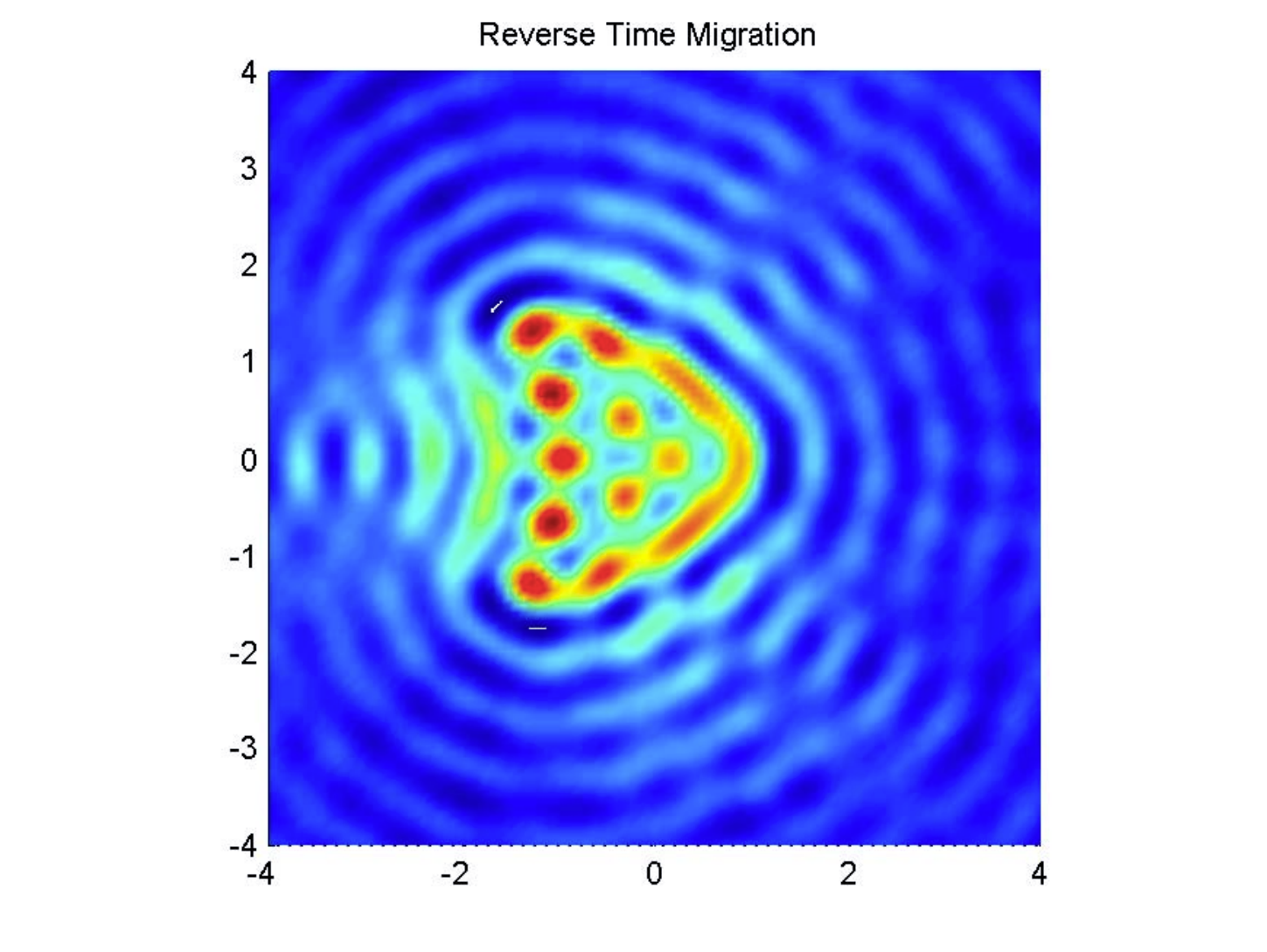}}
  \subfigure[\textbf{Our novel method}]{
    \includegraphics[width=3in]{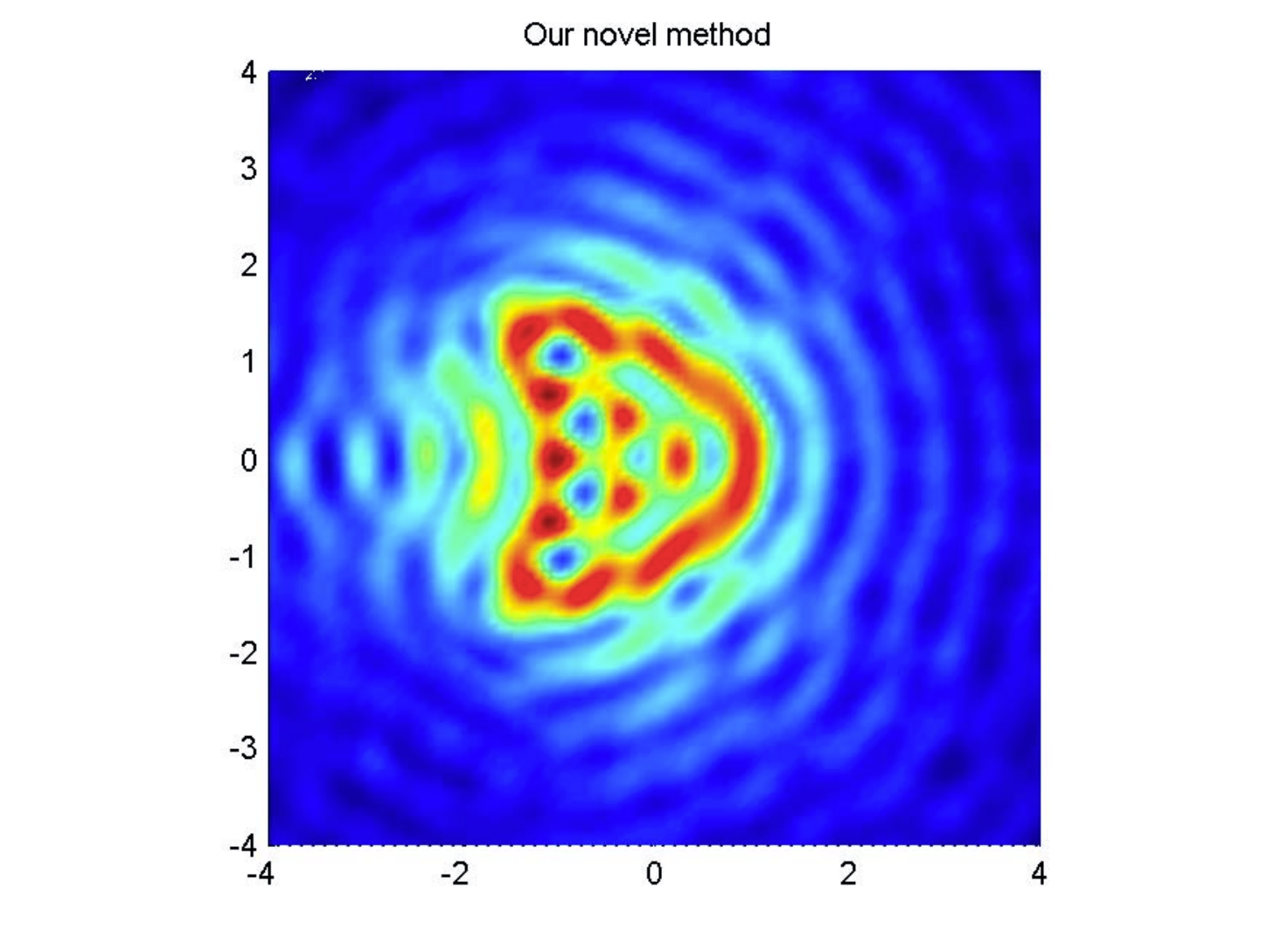}}
\caption{{\bf Example Comparison.}\, Reconstruction of Kite shaped domain with different sampling methods and $30\%$ noise. }
\label{comparison}
\end{figure}

\medskip

\noindent

To enhance the contrast, we take $\rho=2$ in the next five examples.

{\bf Example Dirichlet.}
This example is designed to check the validity of our method for scatterers with different shapes. For simplicity, we impose Dirichlet boundary condition on the
underlying scatterers. The other boundary conditions will be considered in the subsequent examples.
The same as previous example, we set $k=5$, $N=64$.
The research domain is $[-4,4]\times[-4,4]$ with $151\times 151$ equally spaced sampling points.
For the circle given in \eqref{circle}, we take the radius $r$ to be $2$.
Figures \ref{softkite}-\ref{softpear} show the reconstructions of kite, circle, peanut and pear shaped domain, respectively. All the results show that our method is very robust
to noise. Surprisingly, we observe that, even up to $90\%$ noise is added, both the location and shape of the underlying scatterer can be roughly reconstructed.
\begin{figure}[htbp]
  \centering
\subfigure[\textbf{Kite}]{
    \includegraphics[width=3in]{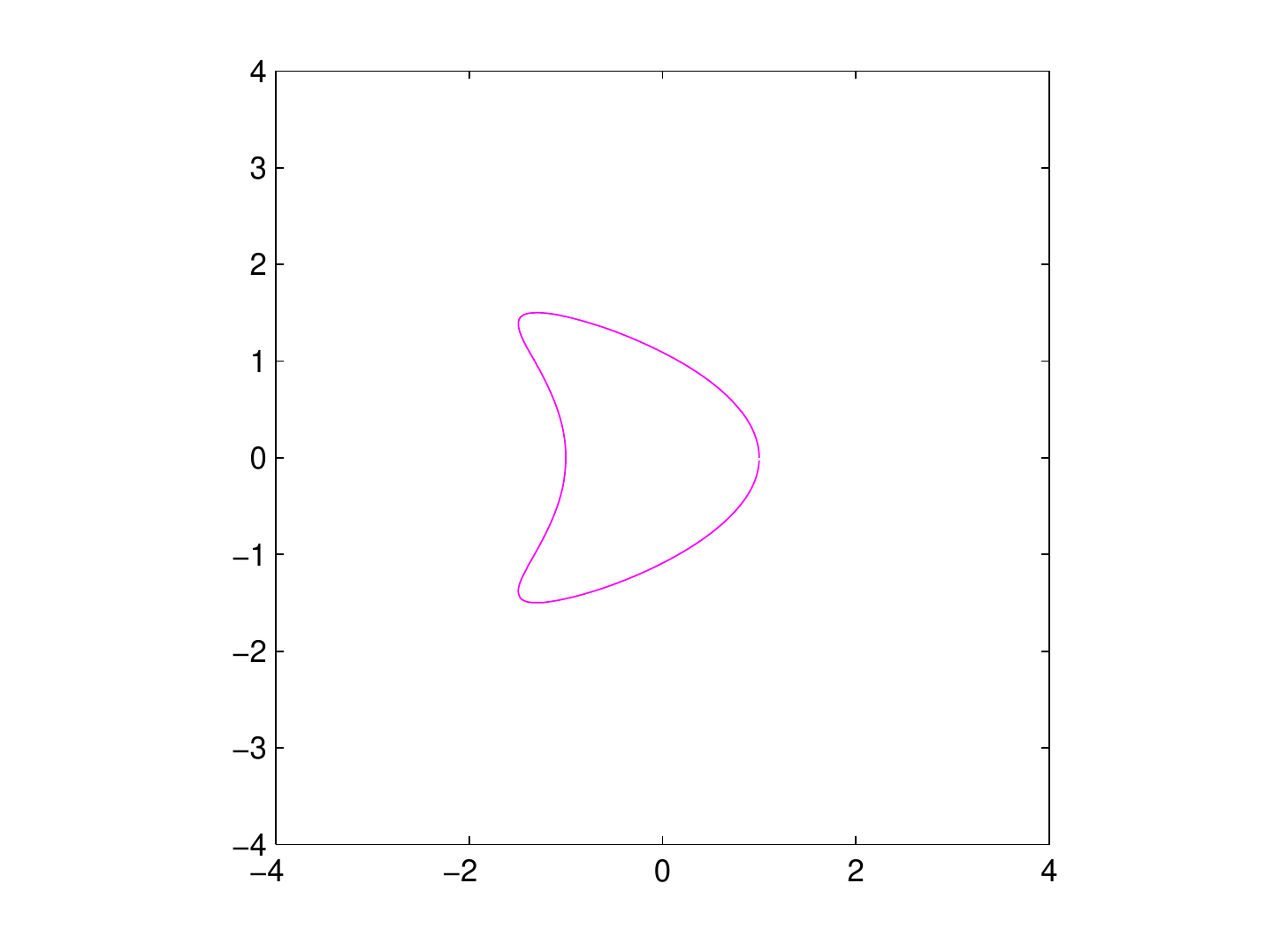}}
  \subfigure[\textbf{10\% noise}]{
    \includegraphics[width=3in]{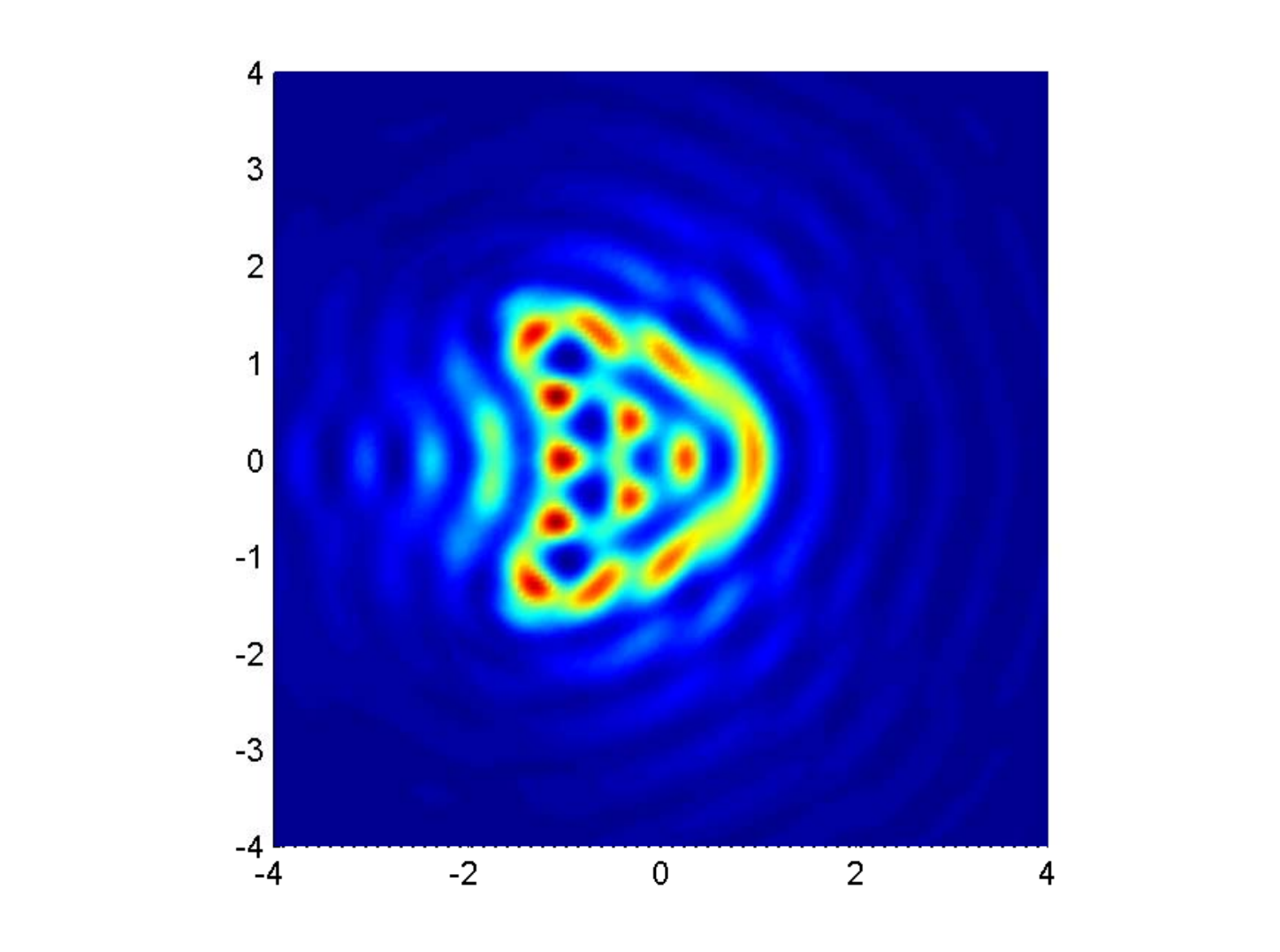}}
  \subfigure[\textbf{30\% noise}]{
    \includegraphics[width=3in]{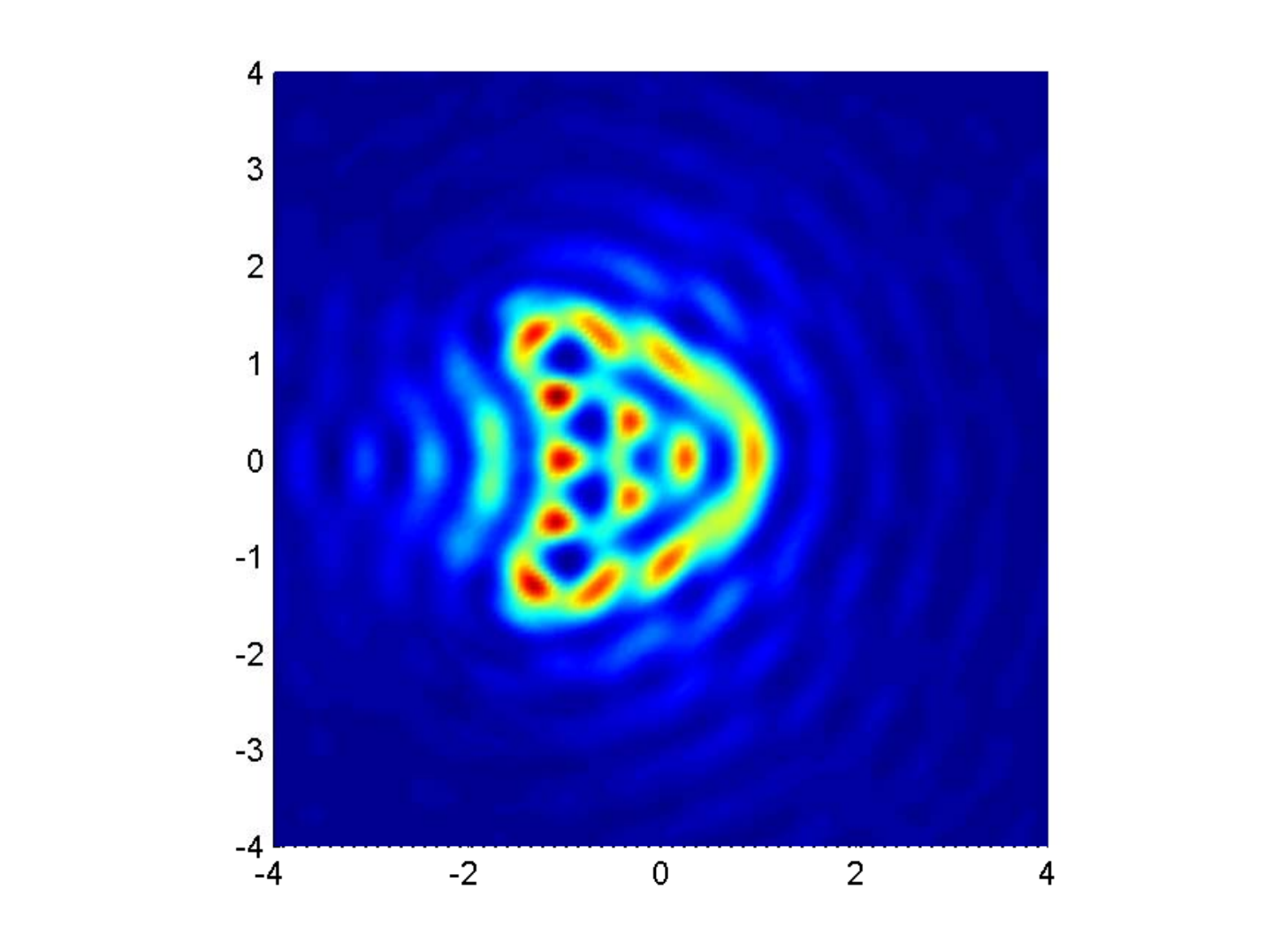}}
  \subfigure[\textbf{90\% noise}]{
    \includegraphics[width=3in]{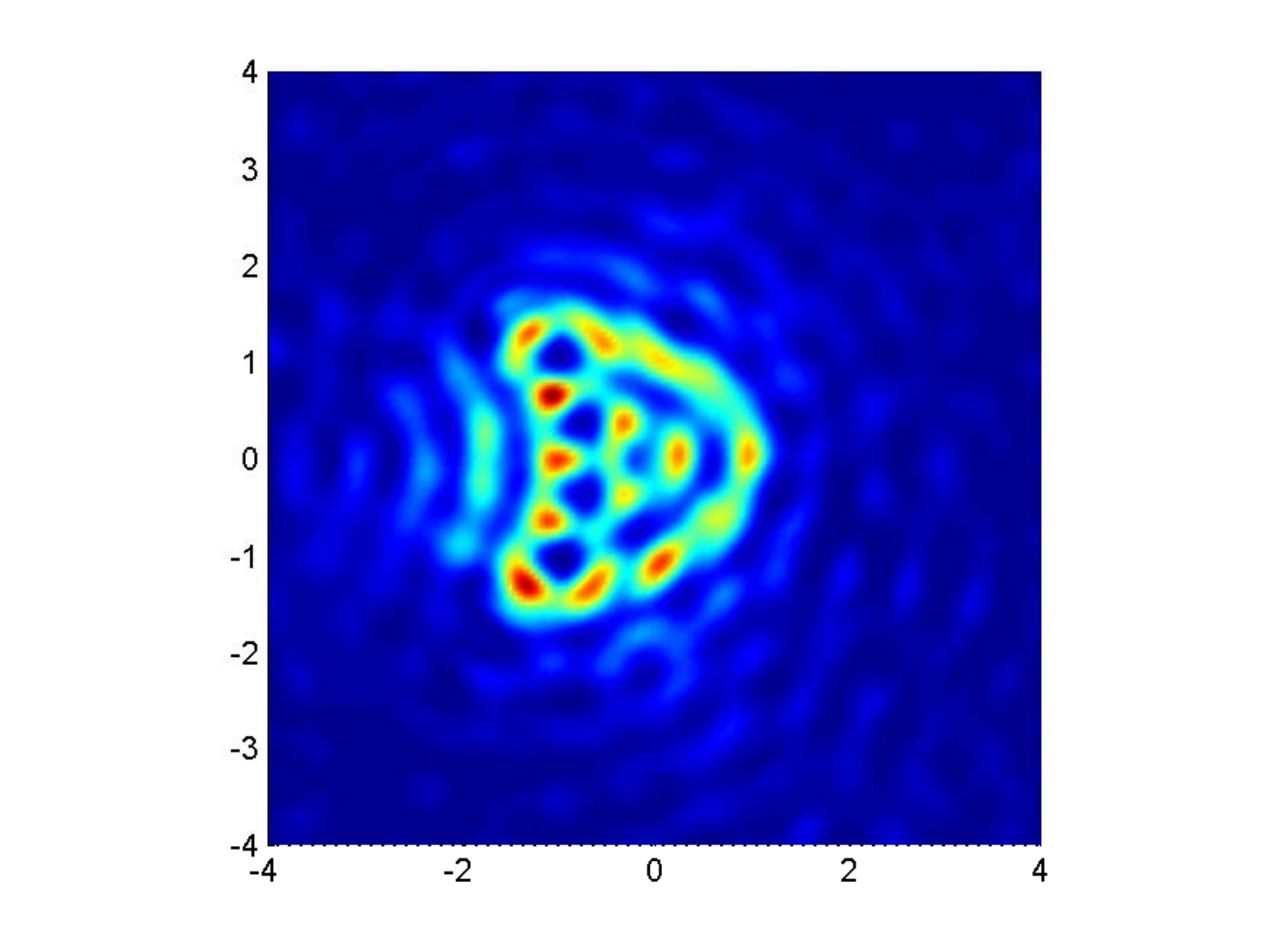}}
\caption{{\bf Example Soft.}\, Reconstruction of Kite shaped domain with different noise.}
\label{softkite}
\end{figure}
\begin{figure}[htbp]
  \centering
  \subfigure[\textbf{Circle}]{
    \includegraphics[width=3in]{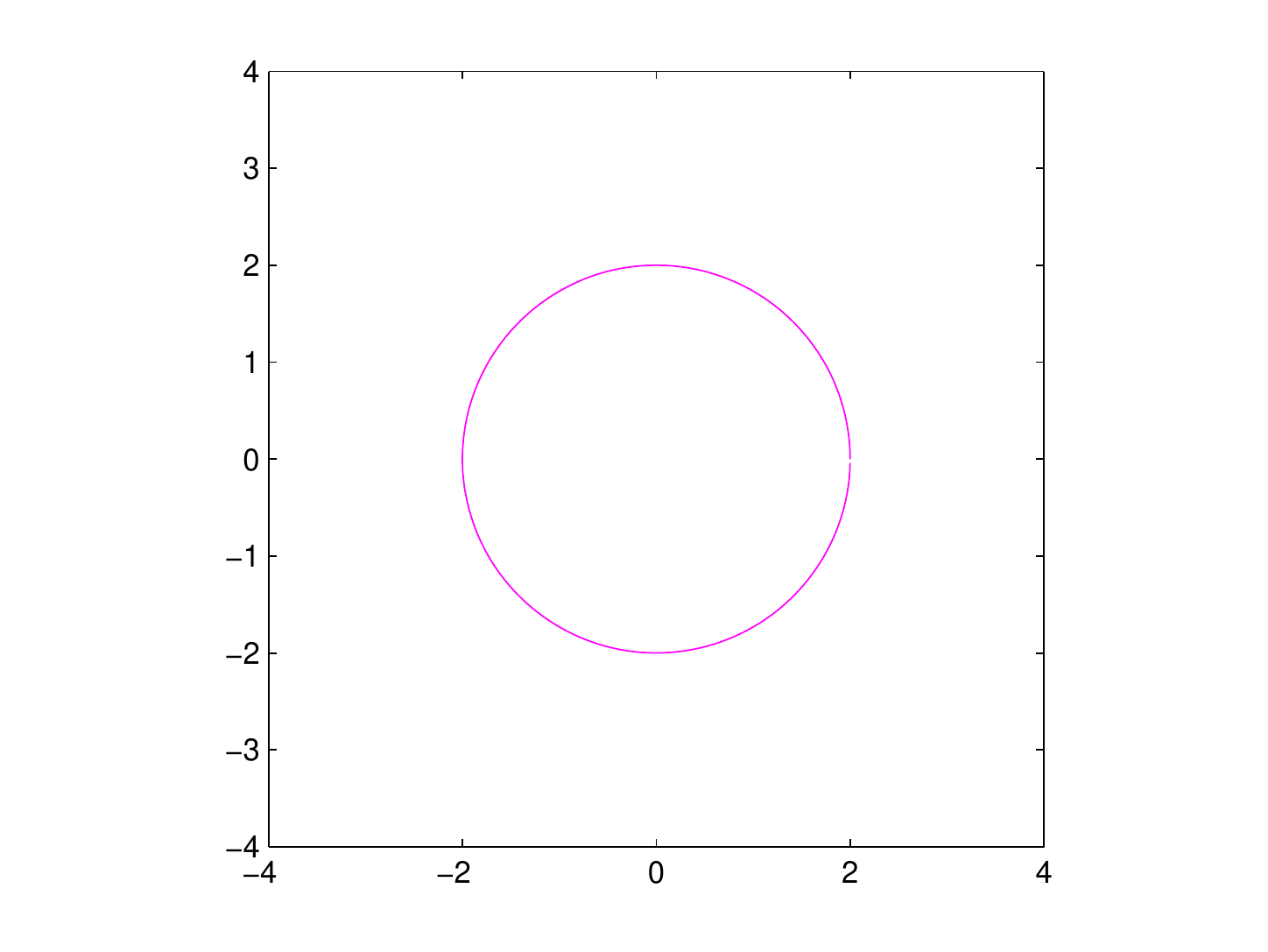}}
  \subfigure[\textbf{10\% noise}]{
    \includegraphics[width=3in]{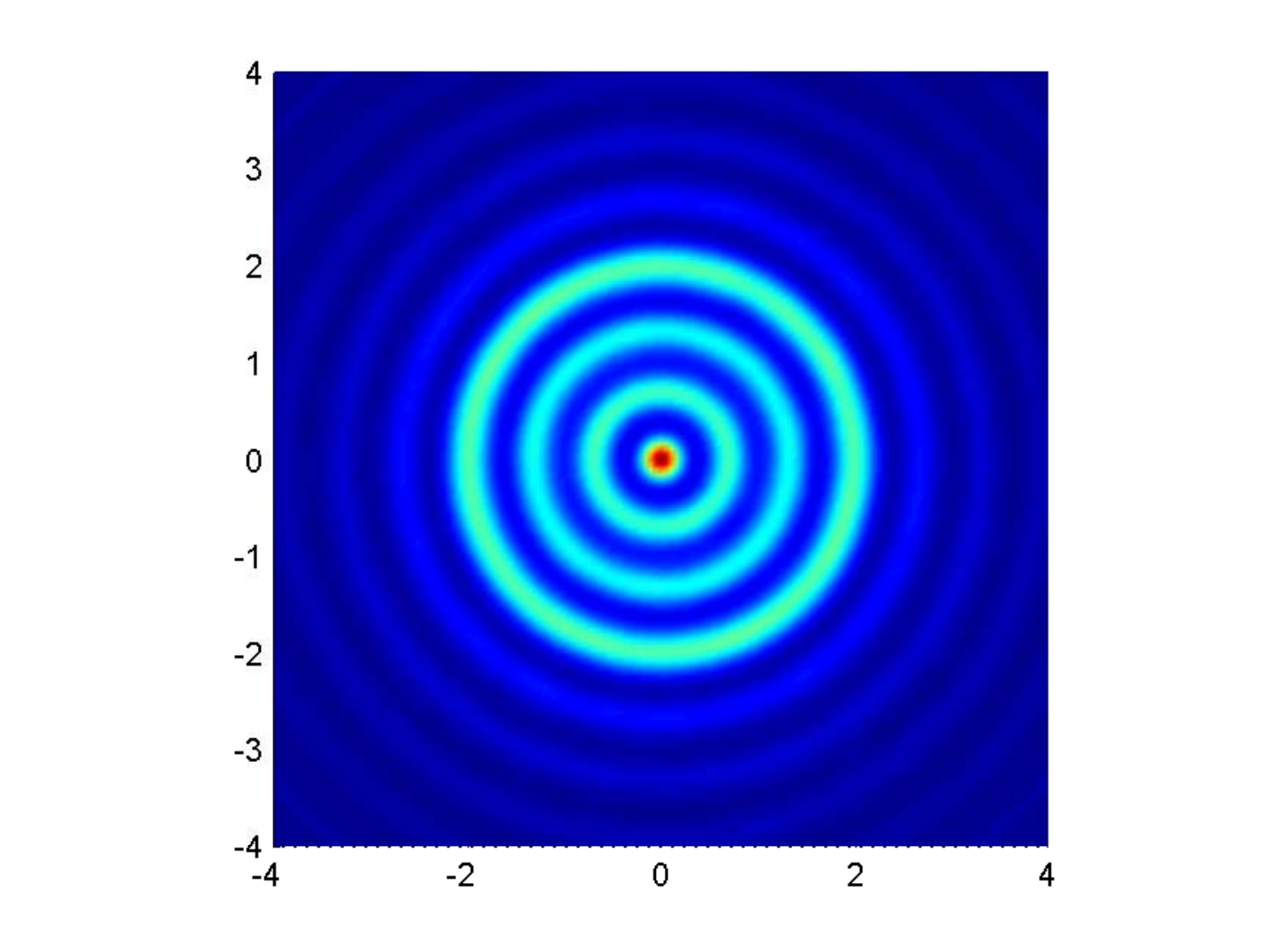}}
  \subfigure[\textbf{30\% noise}]{
    \includegraphics[width=3in]{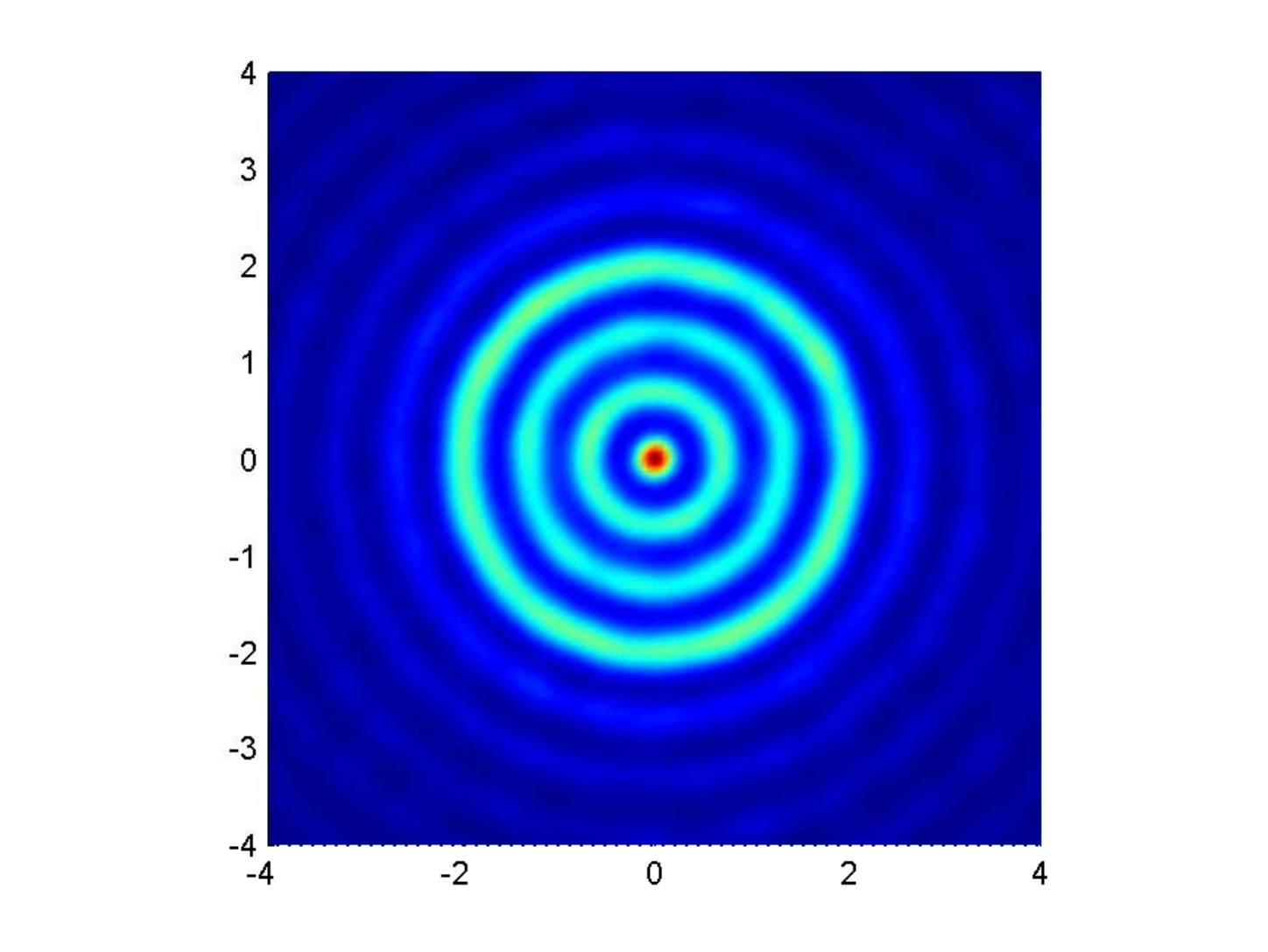}}
  \subfigure[\textbf{90\% noise}]{
    \includegraphics[width=3in]{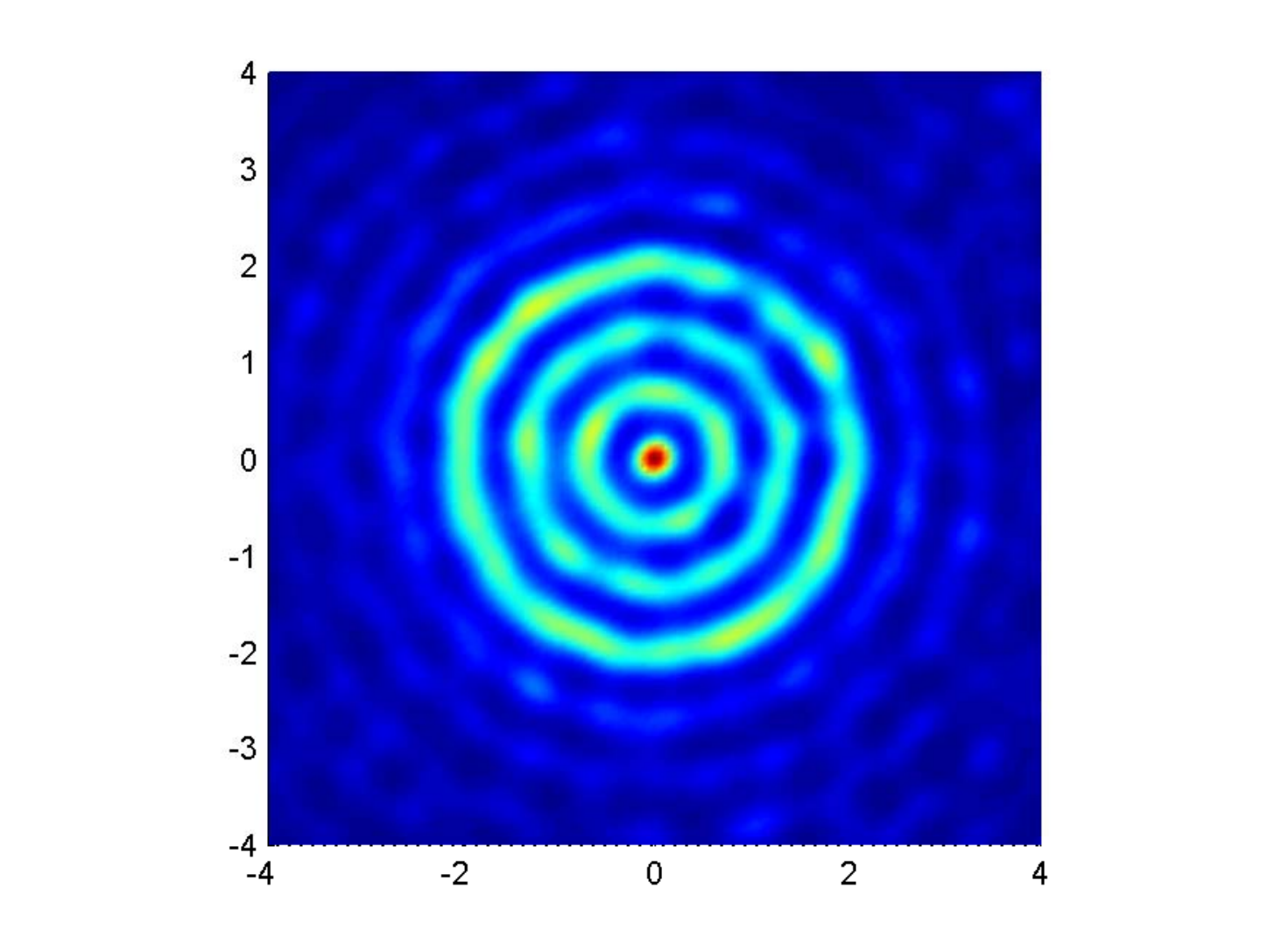}}
\caption{{\bf Example Soft.}\, Reconstruction of a disk with different noise.}
\label{softcircle}
\end{figure}
\begin{figure}[htbp]
  \centering
\subfigure[\textbf{Peanut}]{
    \includegraphics[width=3in]{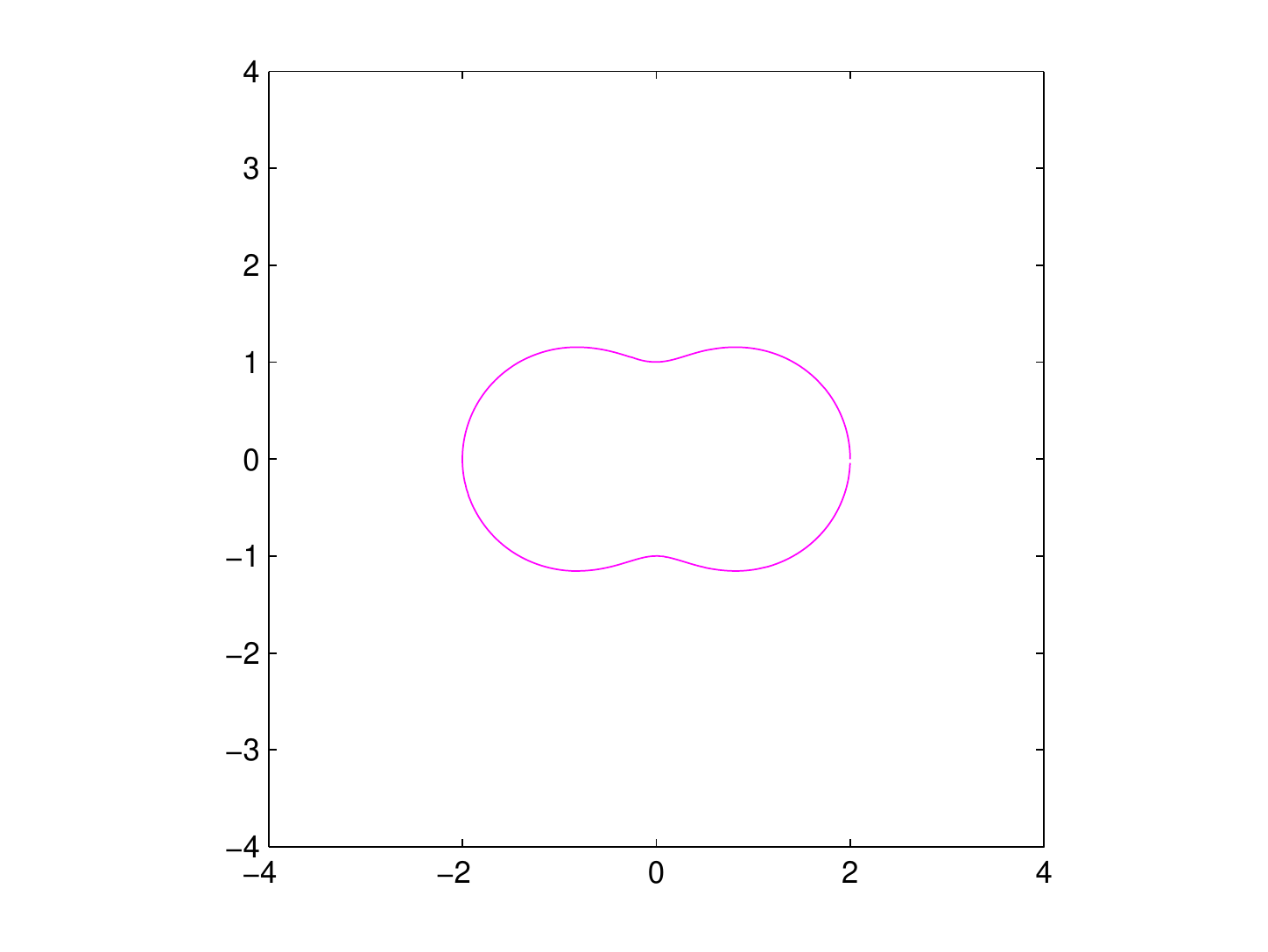}}
  \subfigure[\textbf{10\% noise}]{
    \includegraphics[width=3in]{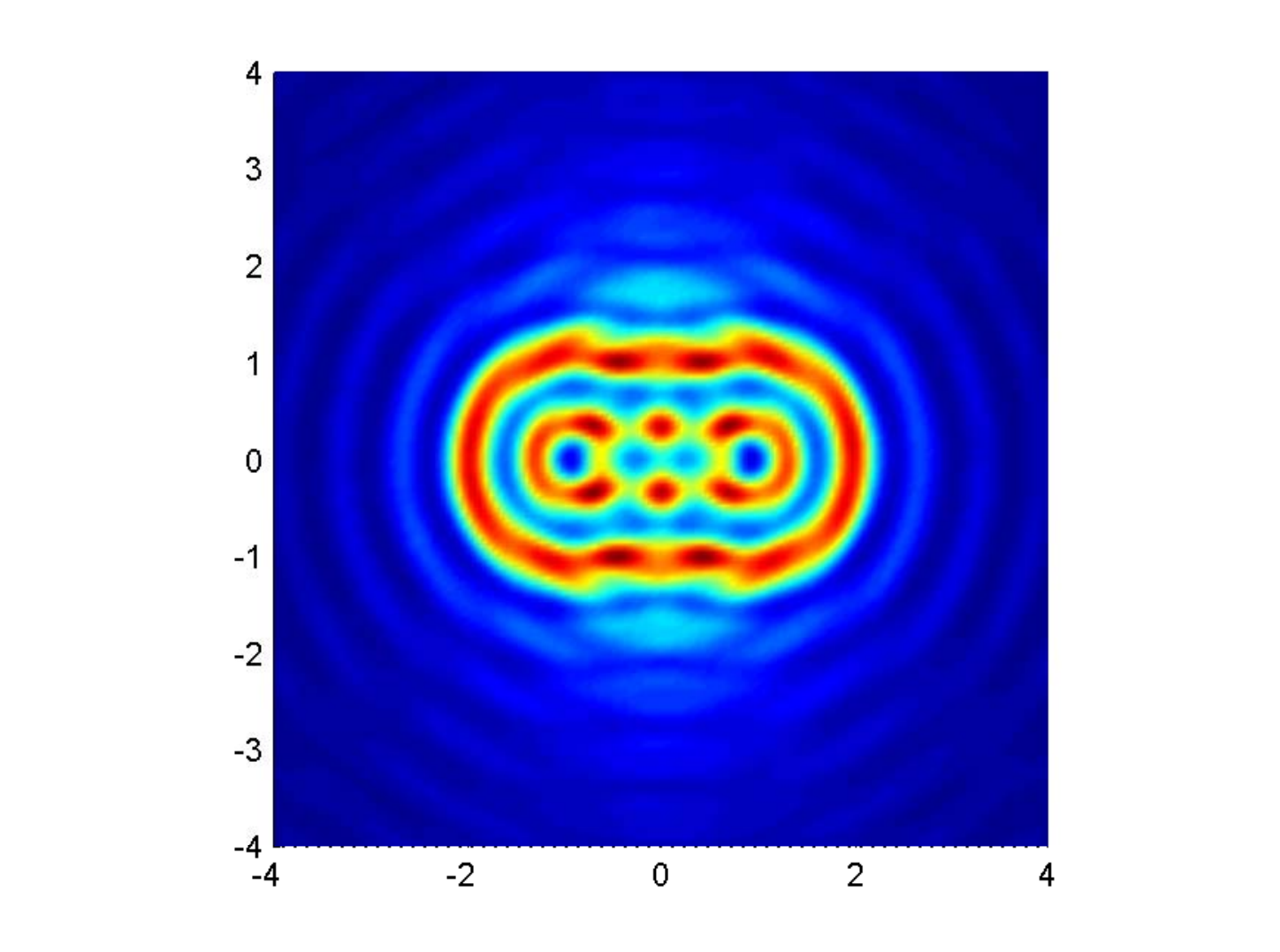}}
  \subfigure[\textbf{30\% noise}]{
    \includegraphics[width=3in]{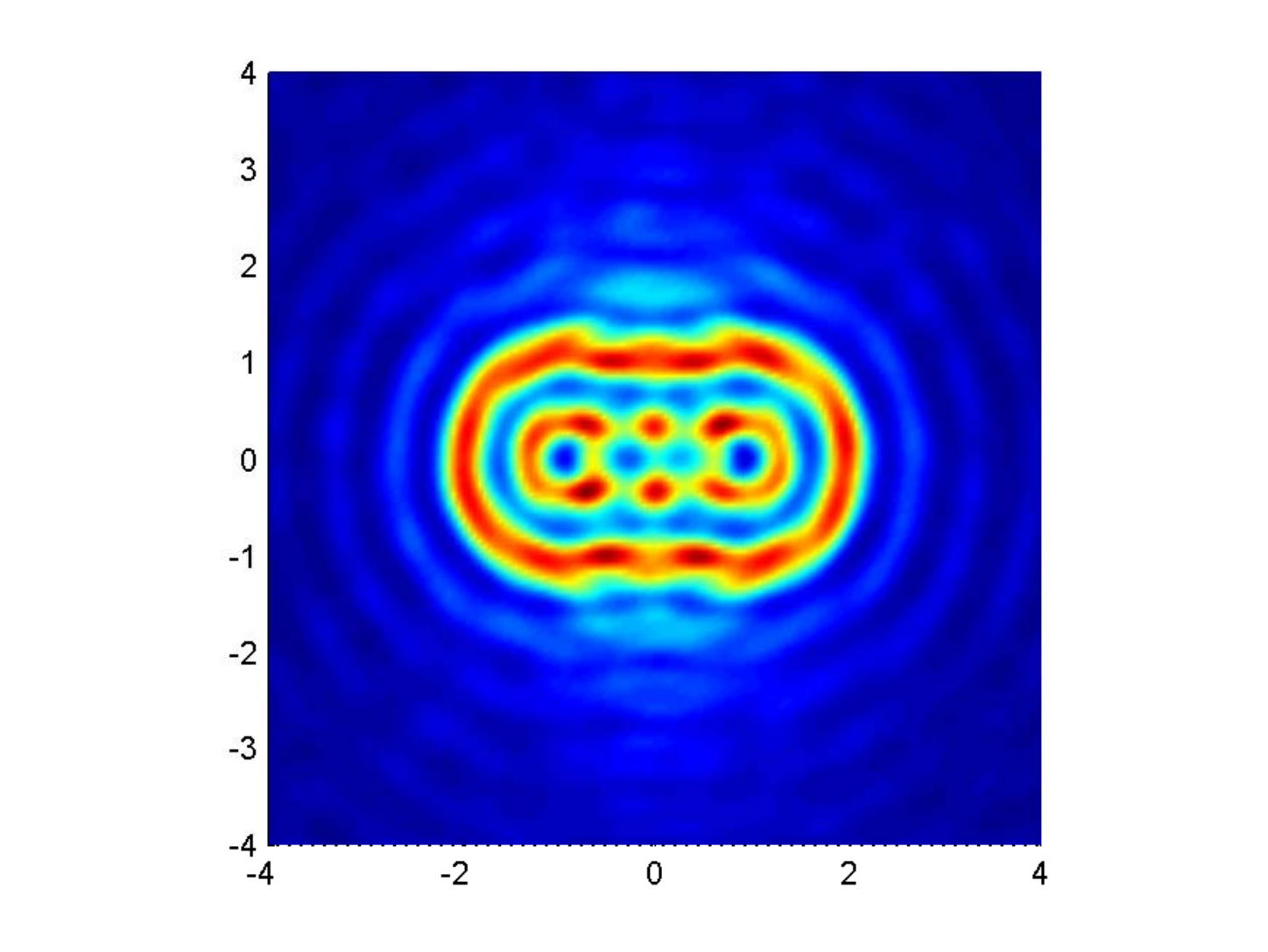}}
  \subfigure[\textbf{90\% noise}]{
    \includegraphics[width=3in]{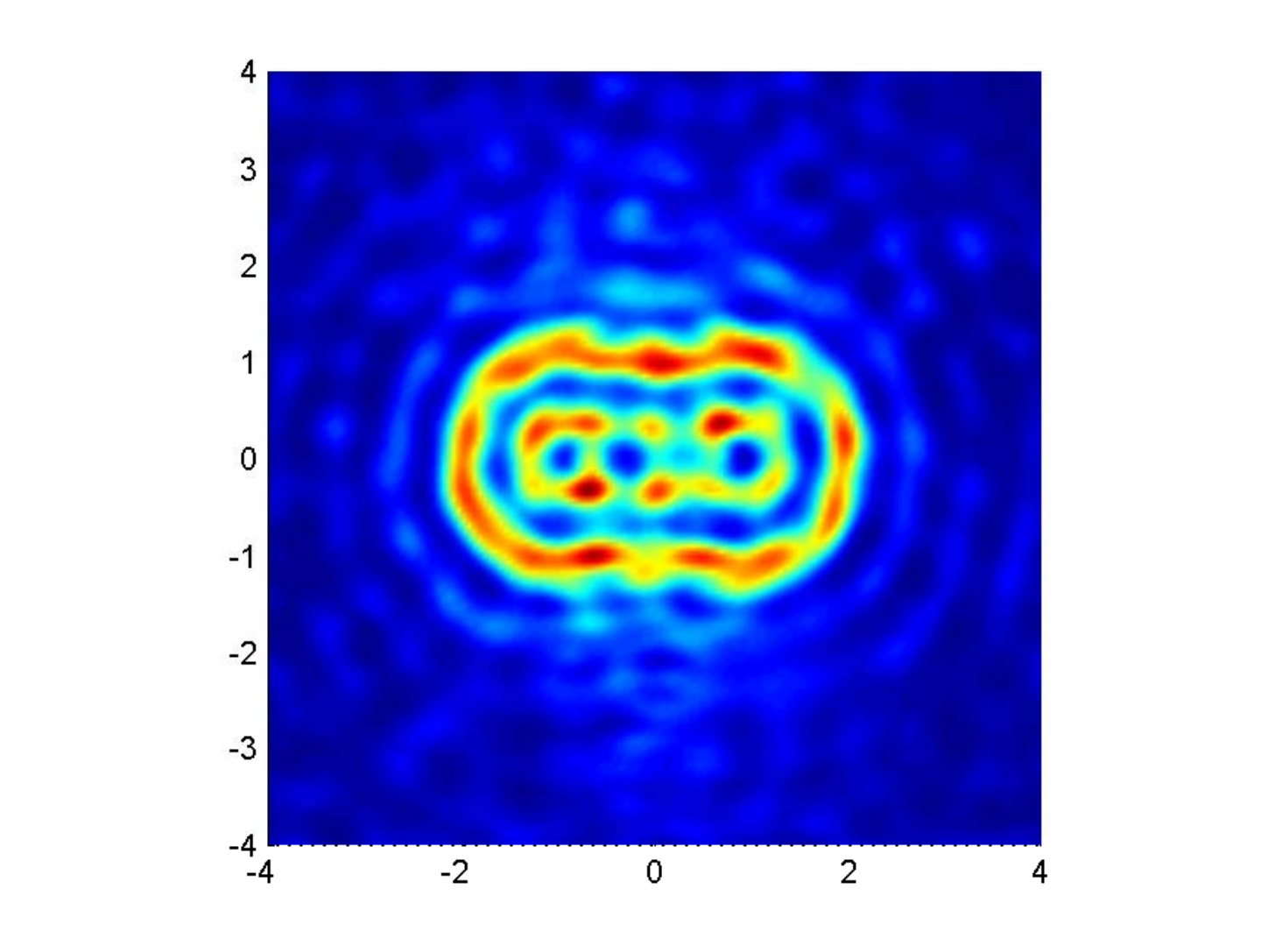}}
\caption{{\bf Example Soft.}\, Reconstruction of Peanut shaped domain with different noise.
}\label{softpeanut}
\end{figure}
\begin{figure}[htbp]
  \centering
  \subfigure[\textbf{Pear}]{
    \includegraphics[width=3in]{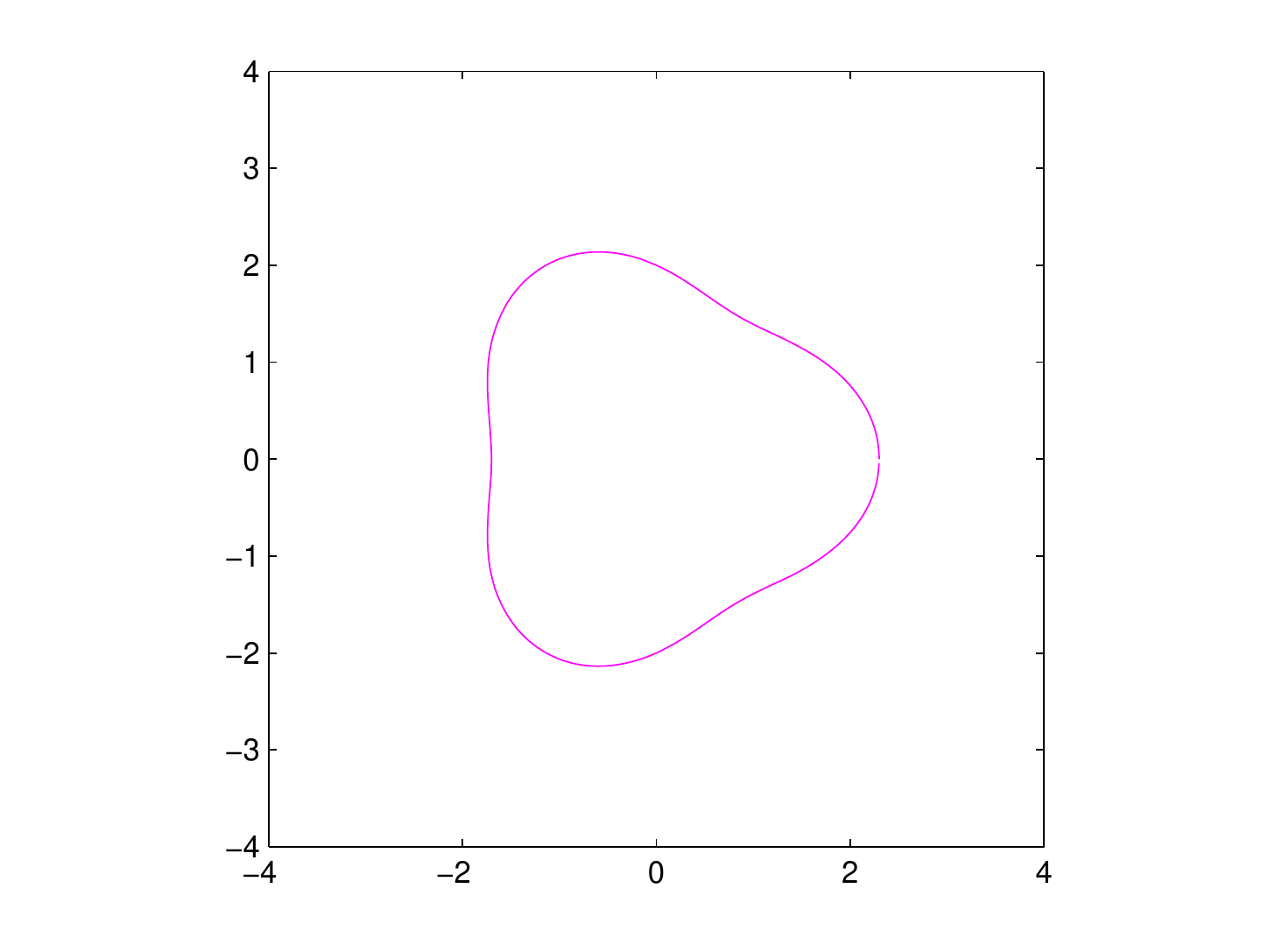}}
  \subfigure[\textbf{10\% noise}]{
    \includegraphics[width=3in]{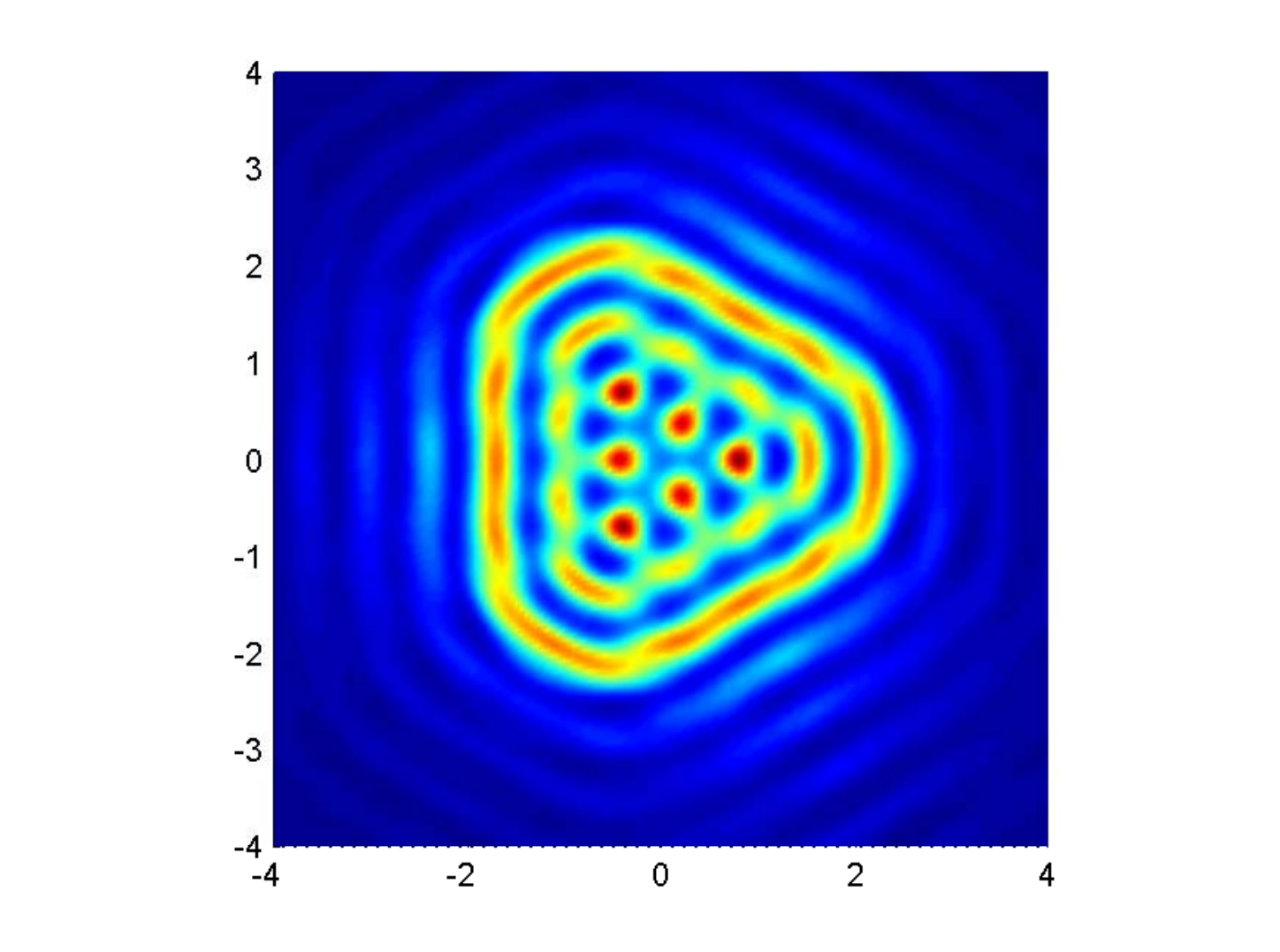}}
  \subfigure[\textbf{30\% noise}]{
    \includegraphics[width=3in]{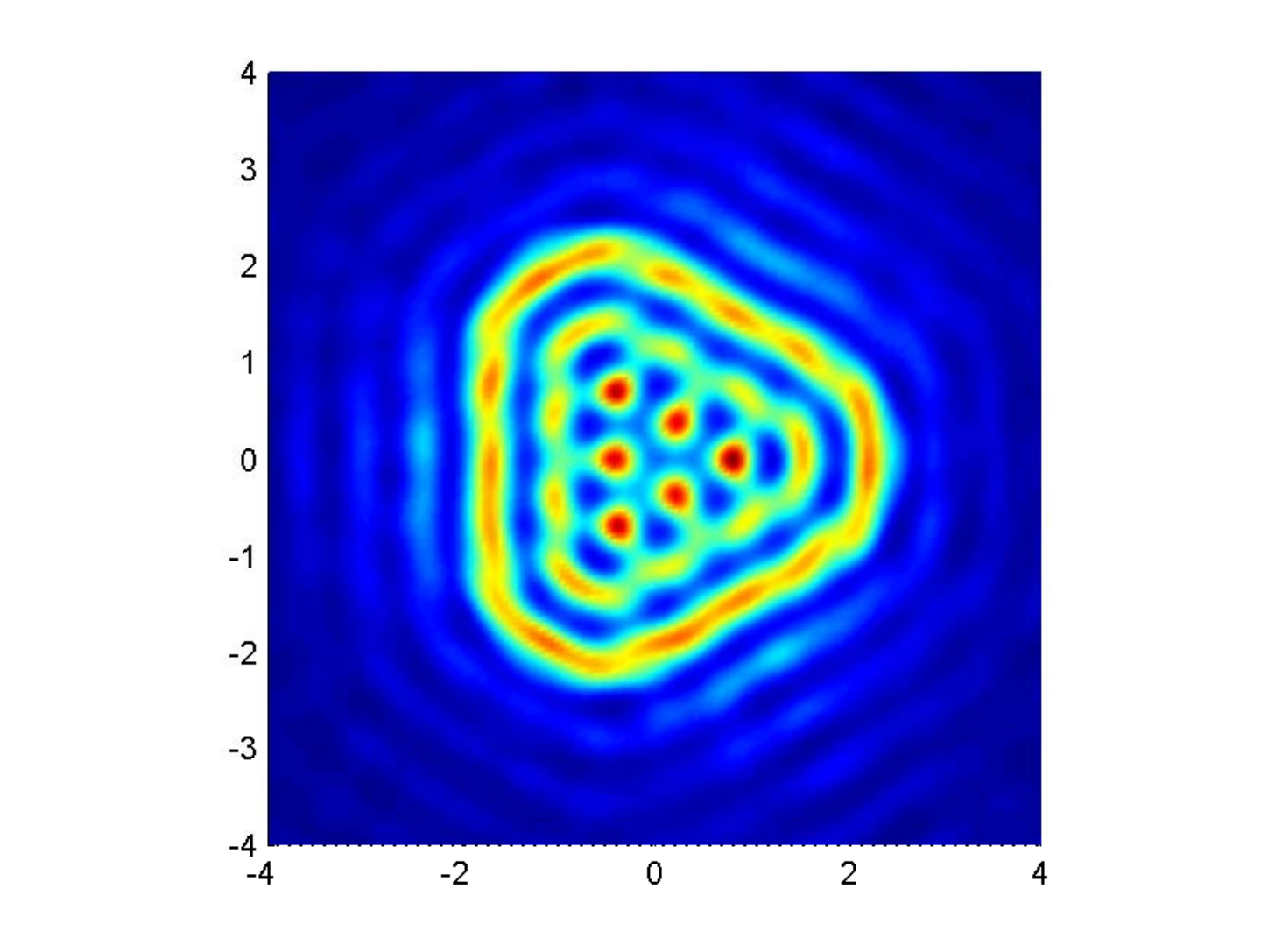}}
  \subfigure[\textbf{90\% noise}]{
    \includegraphics[width=3in]{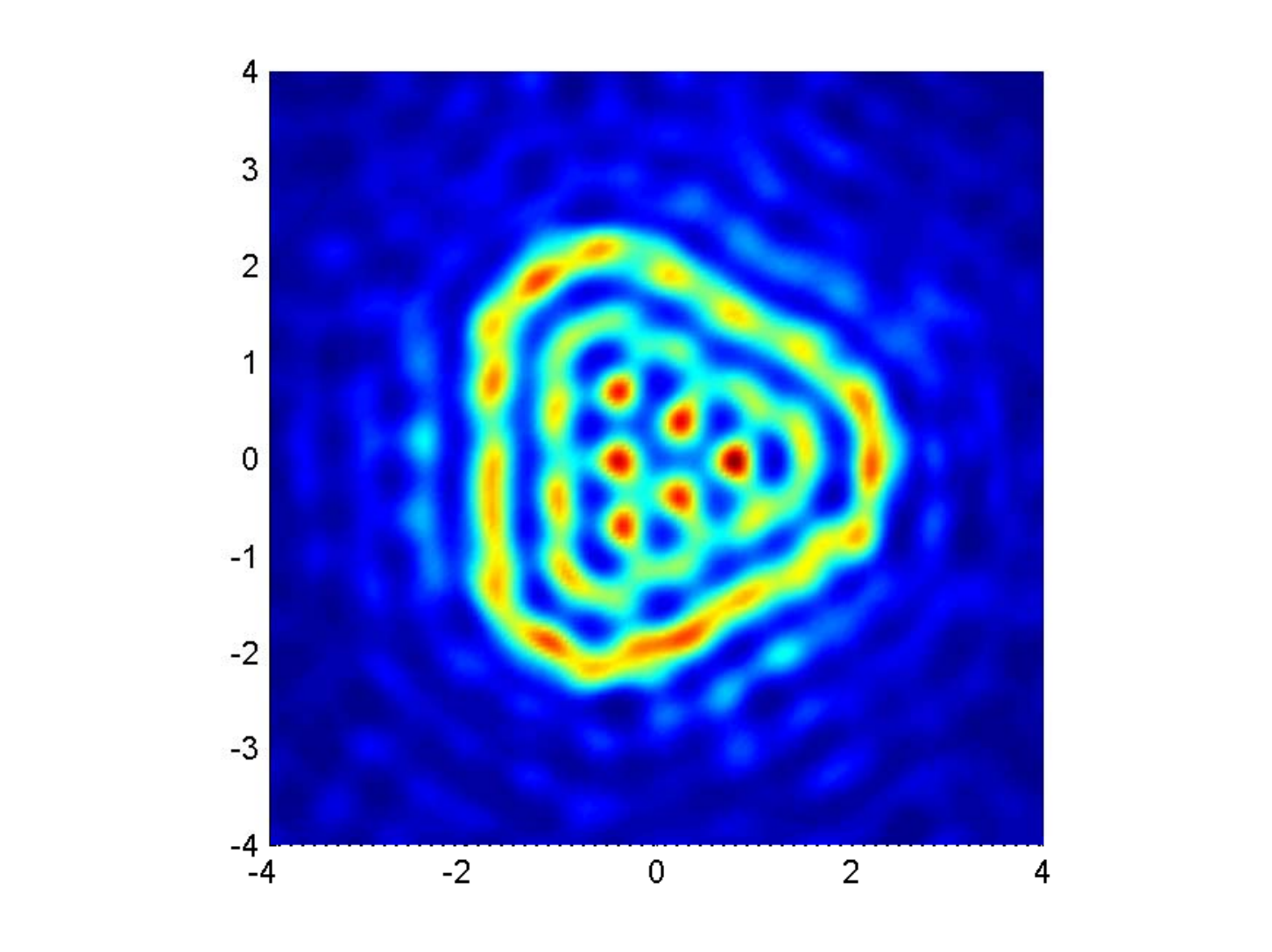}}
\caption{{\bf Example Soft.}\, Reconstruction of Pear shaped domain with different noise.
}\label{softpear}
\end{figure}

\medskip

\noindent

{\bf Example OtherPhyPro.}
For comparison, we consider the the other boundary conditions and the penetrable inhomogeneous medium. We take the pear shaped domain as an example.
Figure \ref{impedancepear} shows the results for impedance boundary conditions with different impedance functions $\la$.
Figure \ref{penetrablepear} shows the results for penetrable inhomogeneous medium with different contrast functions $q$.
From the reconstructions shown in Figures \ref{softpear}-\ref{penetrablepear}, we observe that the shape of the pear shaped domain can always be roughly captured,
it only changes slightly for different physical properties. This further show that our method is independent of the physical properties of the underlying scatterer.\\
\begin{figure}[htbp]
  \centering
  \subfigure[\textbf{$\lambda=0$}]{
    \includegraphics[width=3in]{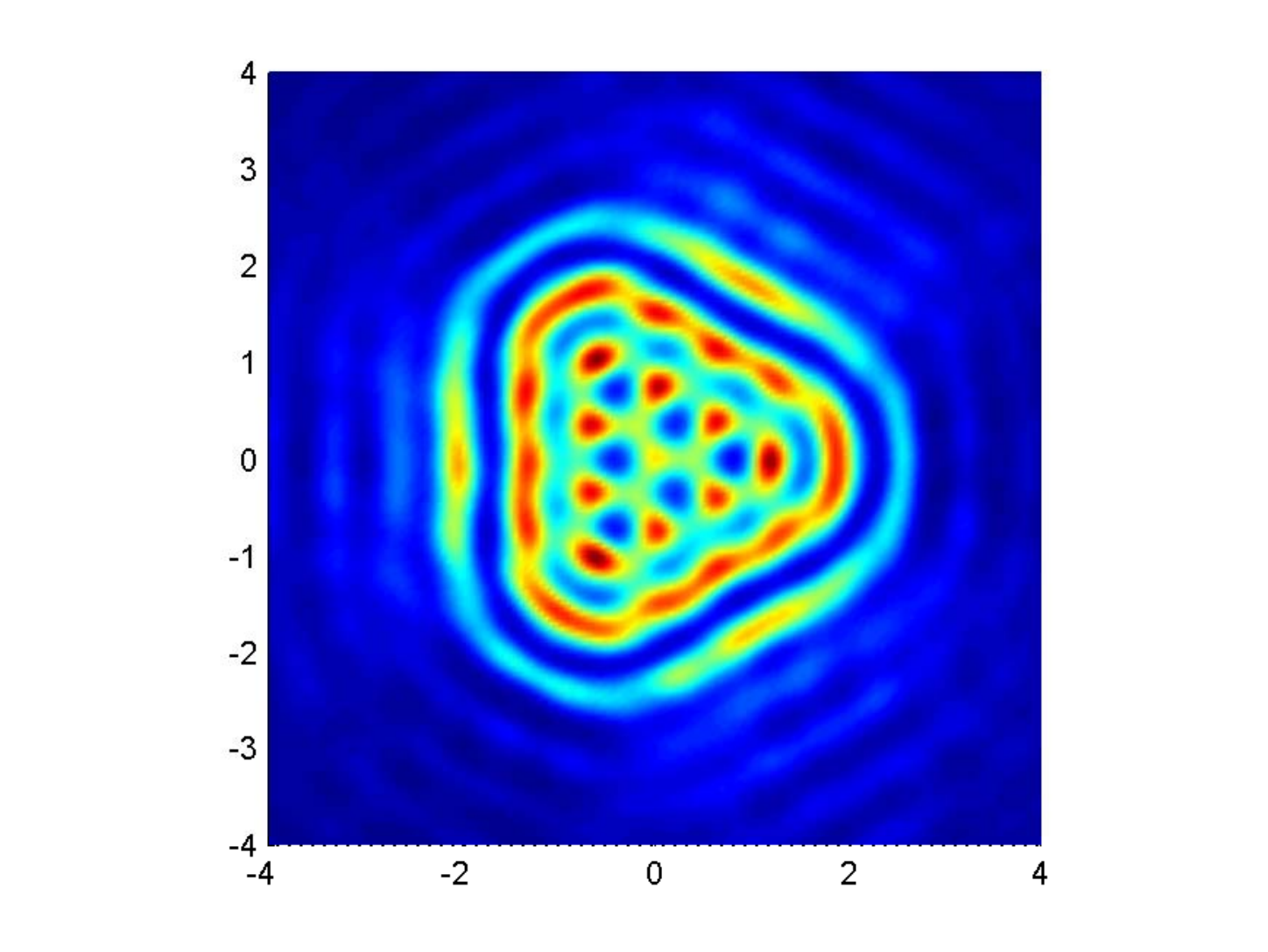}}
  \subfigure[\textbf{$\lambda=1$}]{
    \includegraphics[width=3in]{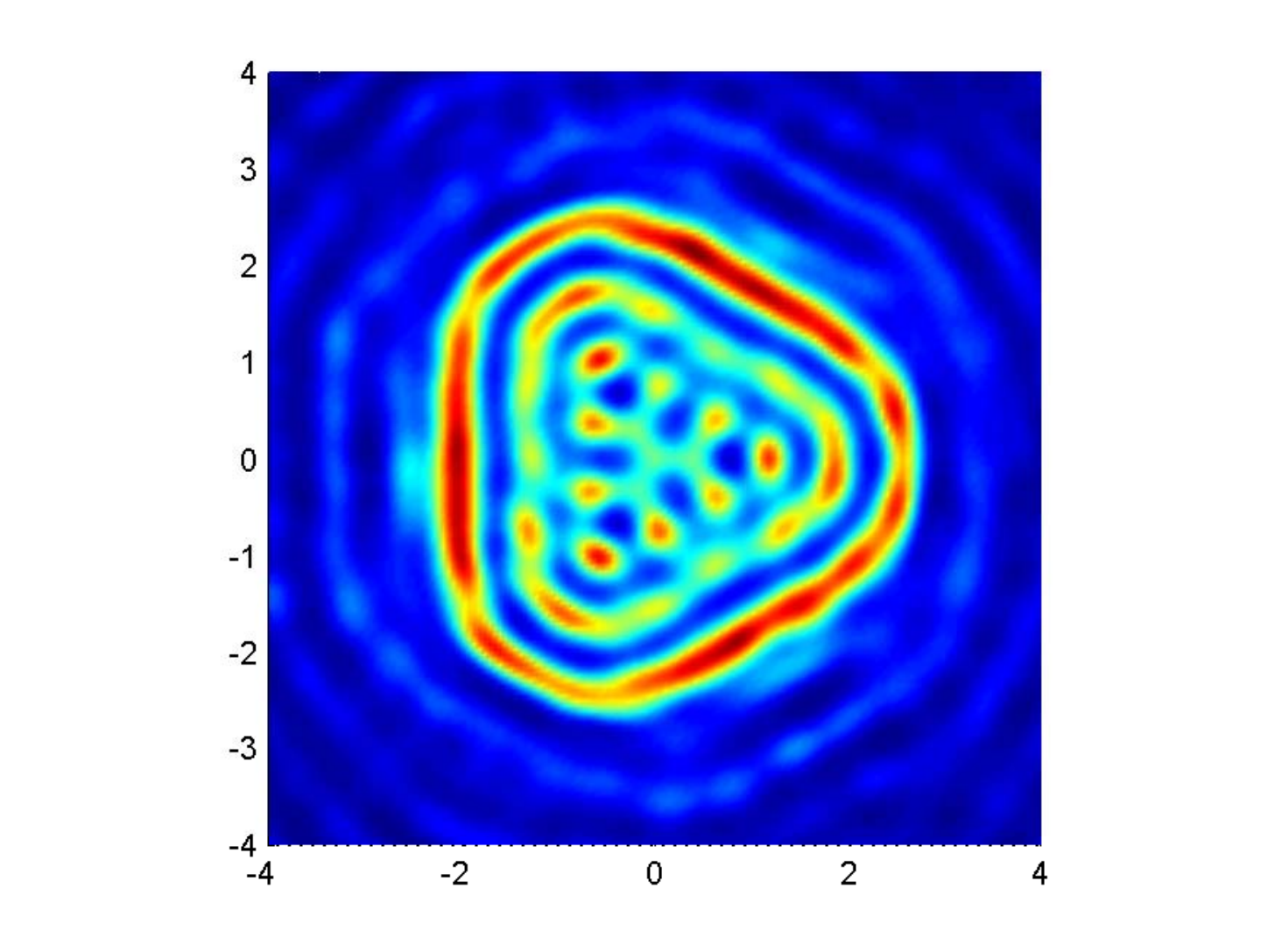}}
  \subfigure[\textbf{$\lambda=i$}]{
    \includegraphics[width=3in]{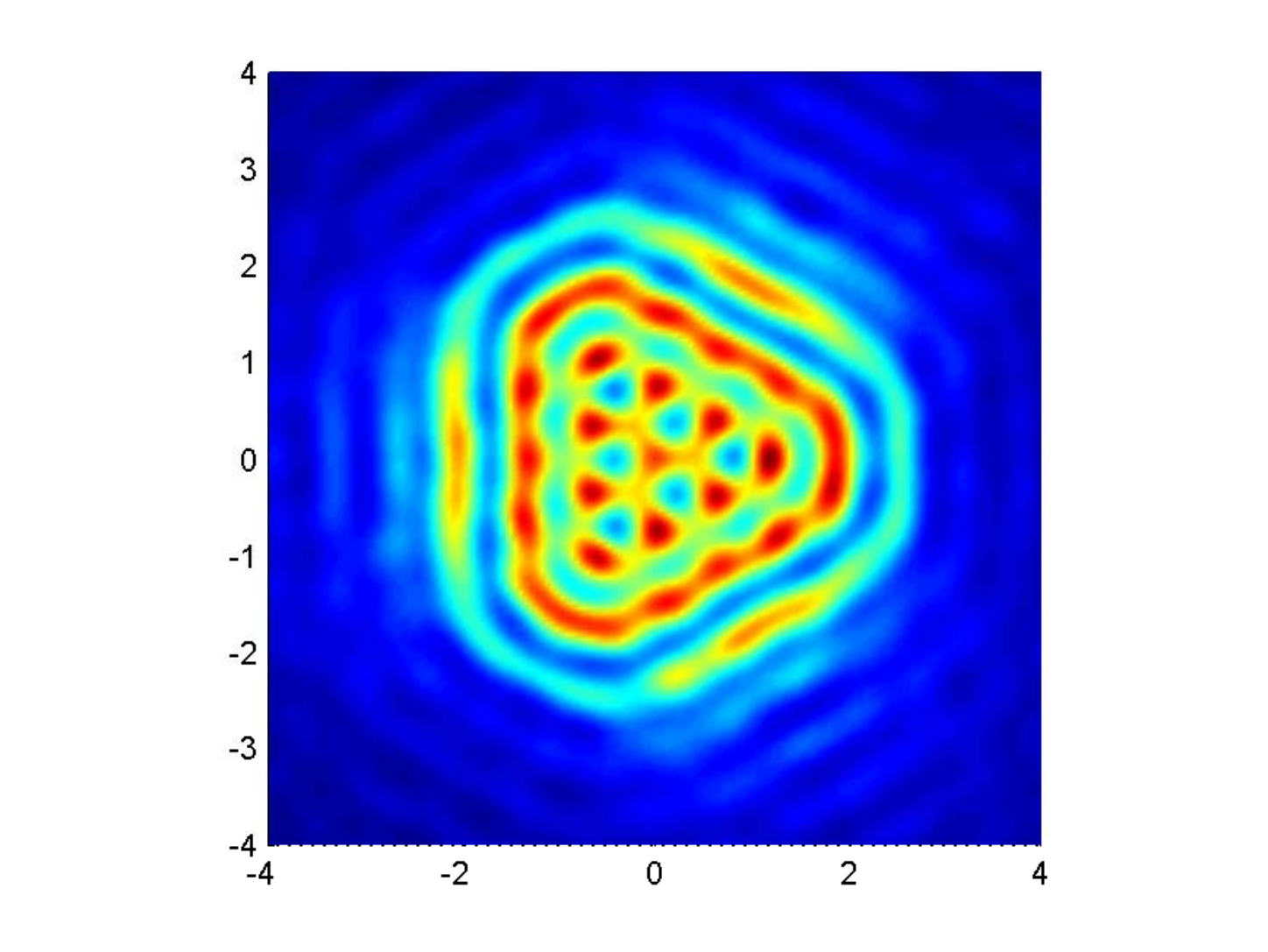}}
  \subfigure[\textbf{$\lambda=1+i$}]{
    \includegraphics[width=3in]{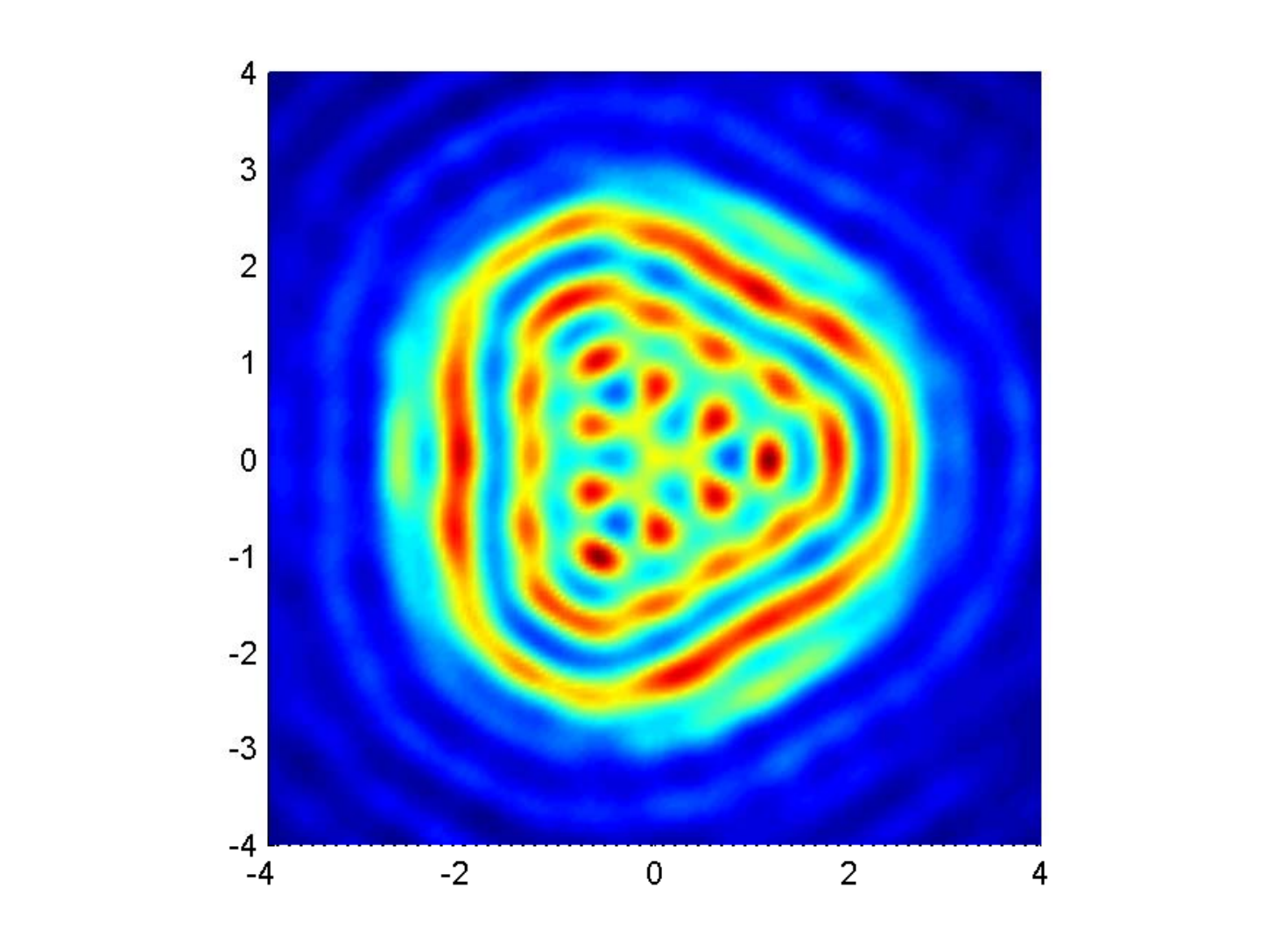}}
    \caption{{\bf Example OtherPhyPro.}\, Reconstruction of Pear shaped domain with different impedance conditions and $30\%$ noise.}
\label{impedancepear}
\end{figure}

\begin{figure}[htbp]
  \centering
  \subfigure[\textbf{$q=0.5$}]{
    \includegraphics[width=3in]{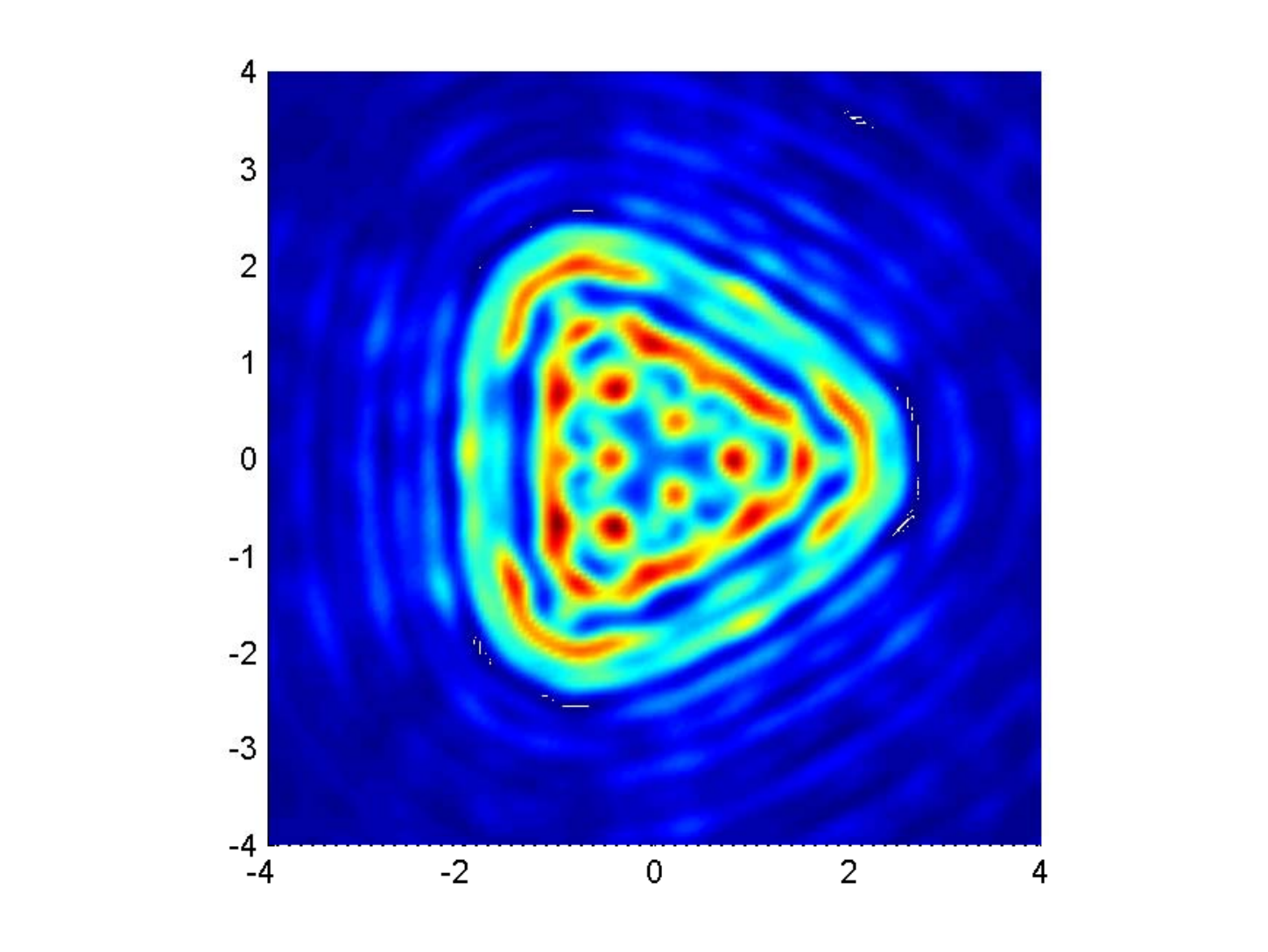}}
  \subfigure[\textbf{$q=0.5+0.5i$}]{
    \includegraphics[width=3in]{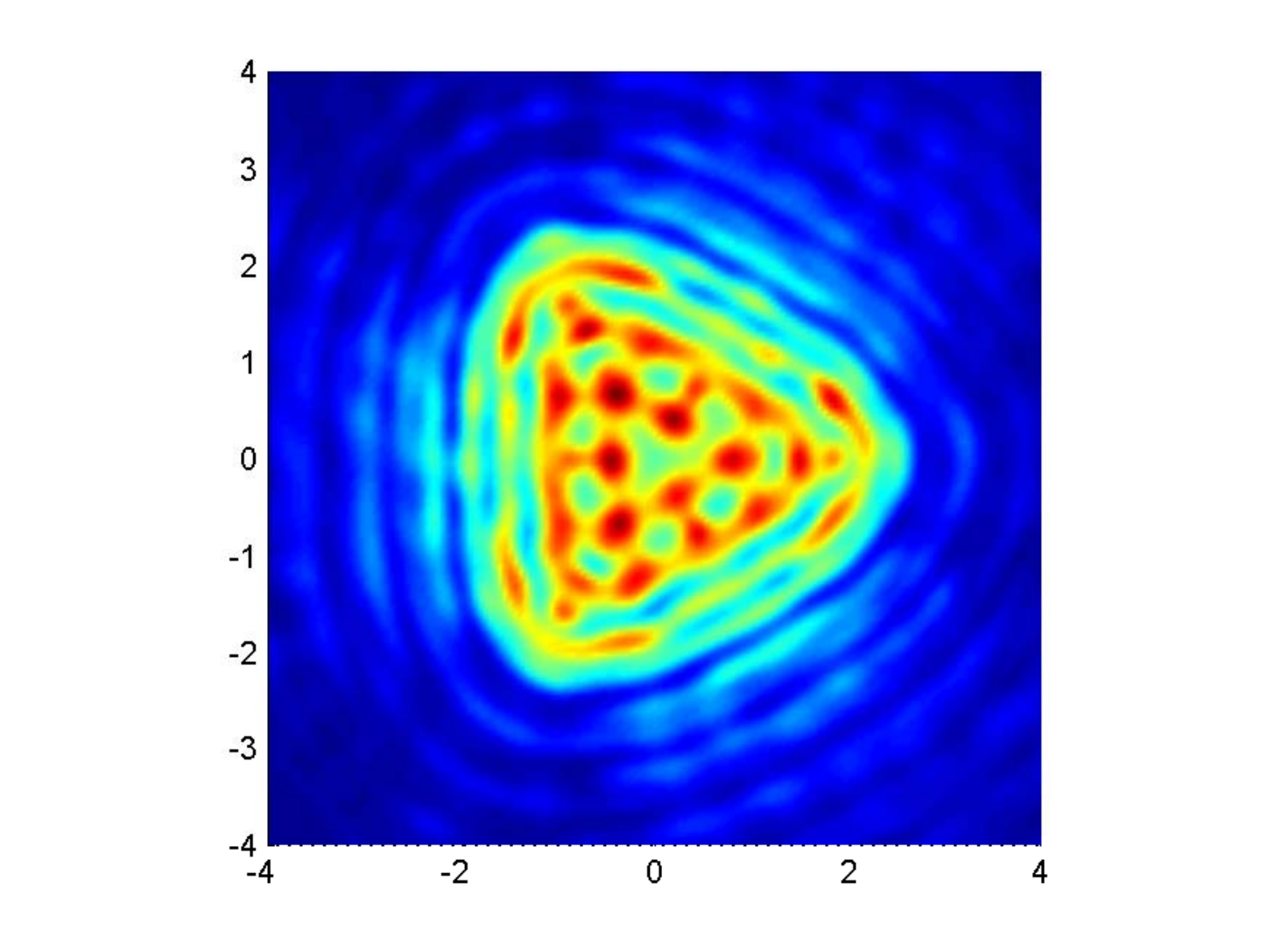}}
  \subfigure[\textbf{$q=-0.5+0.5i$}]{
    \includegraphics[width=3in]{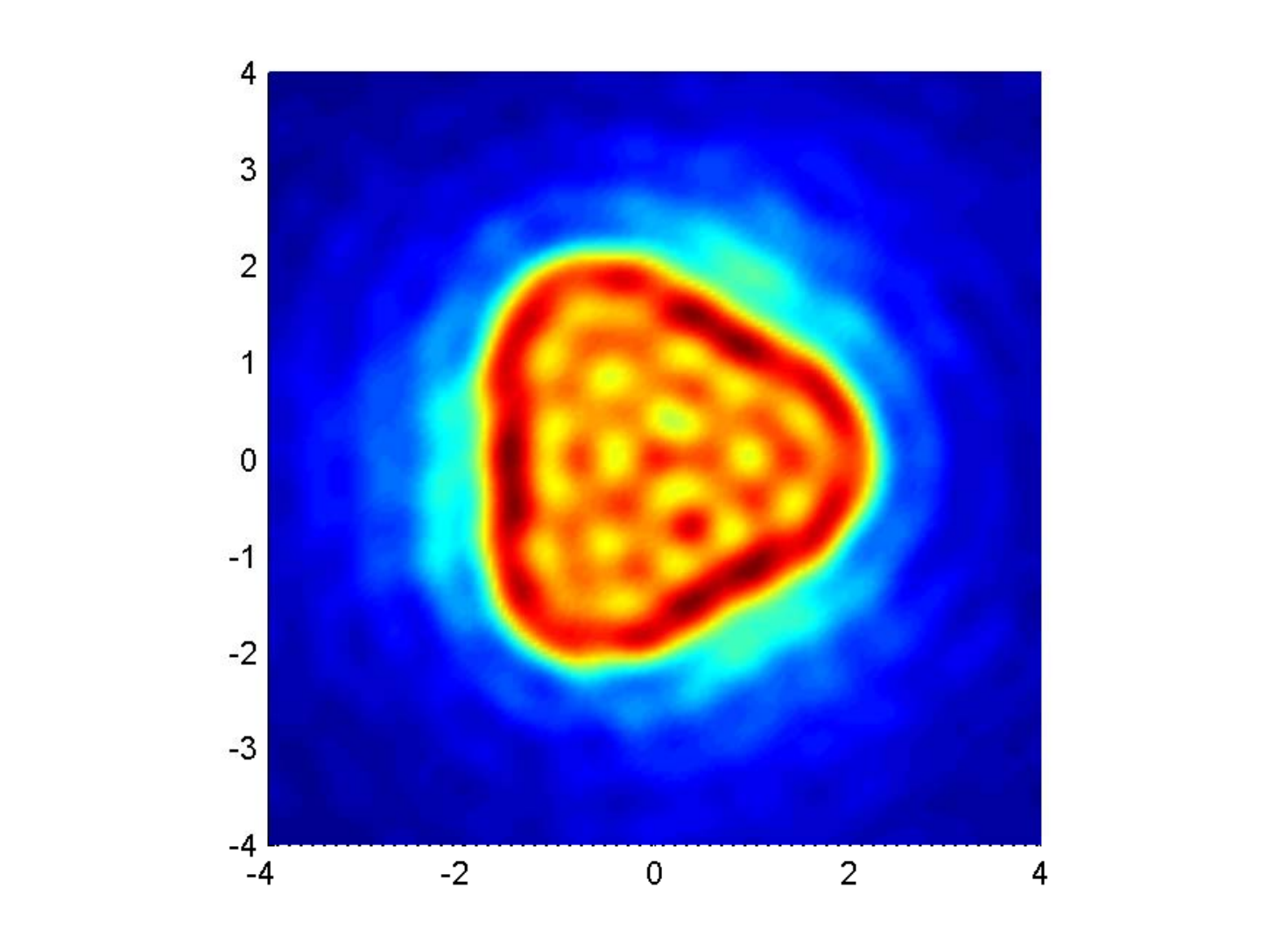}}
  \subfigure[\textbf{$q=-0.5$}]{
    \includegraphics[width=3in]{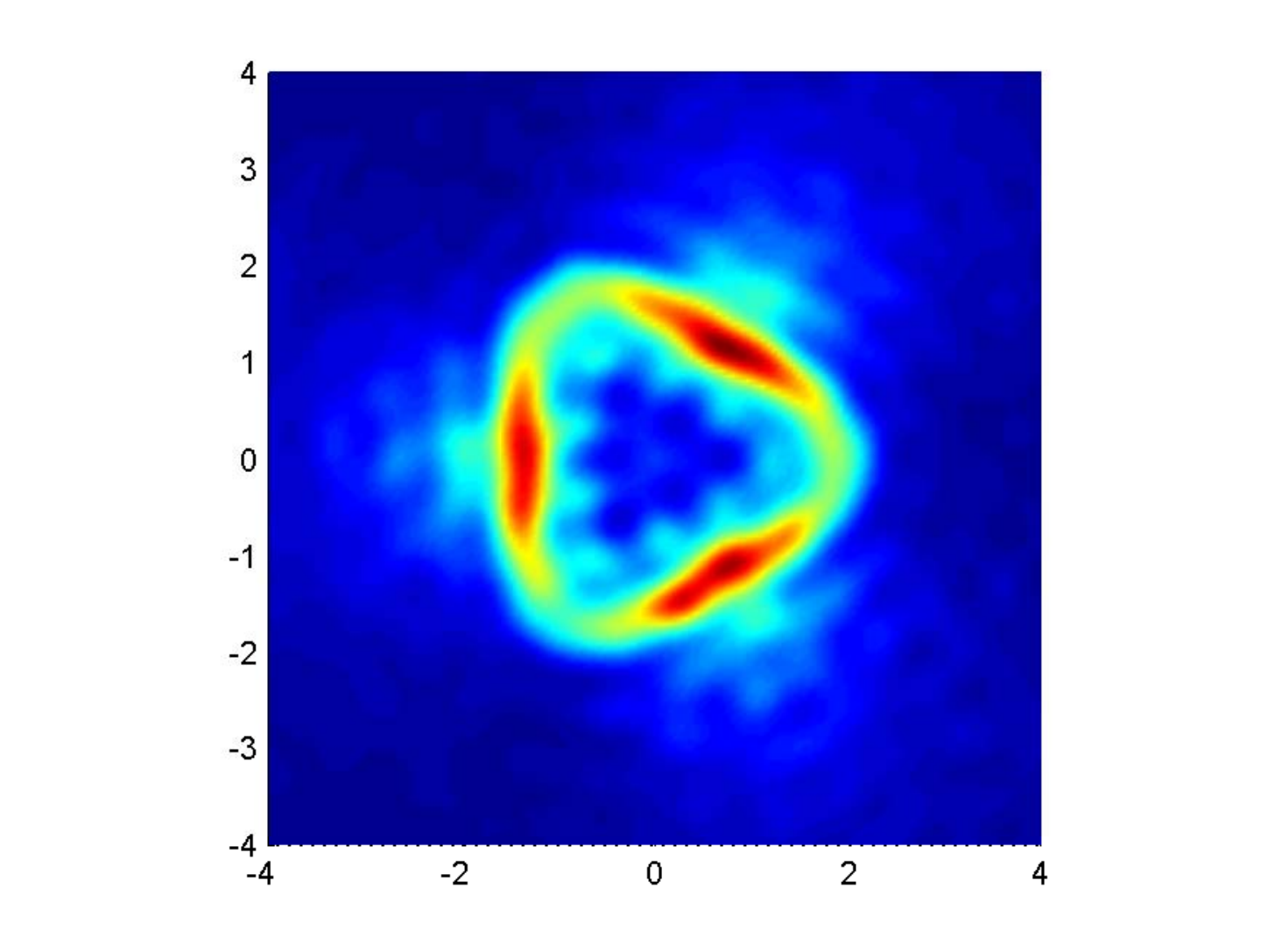}}
    \caption{{\bf Example OtherPhyPro.}\, Reconstruction of Pear shaped domain with different inhomogeneities and $30\%$ noise.}
\label{penetrablepear}
\end{figure}

\medskip

\noindent

{\bf Example MixedType.}
We consider the scattering by a scatterer with two disjoint components. We set the scatterer to be a combination of a peanut shaped domain centered at
$(-3,3)$ and a kite shaped domain centered at $(3,-3)$. Further, we impose different physical properties on each component.
The search domain is the rectangle $[-7,7]\times[-7,7]$ with $301\times 301$ equally spaced sampling points.
Figure \ref{multiscalar1} shows the reconstructions for scatterers with different physical property components.

\begin{figure}[htbp]
  \centering
  \subfigure[\textbf{Original domain}]{
    \includegraphics[width=3in]{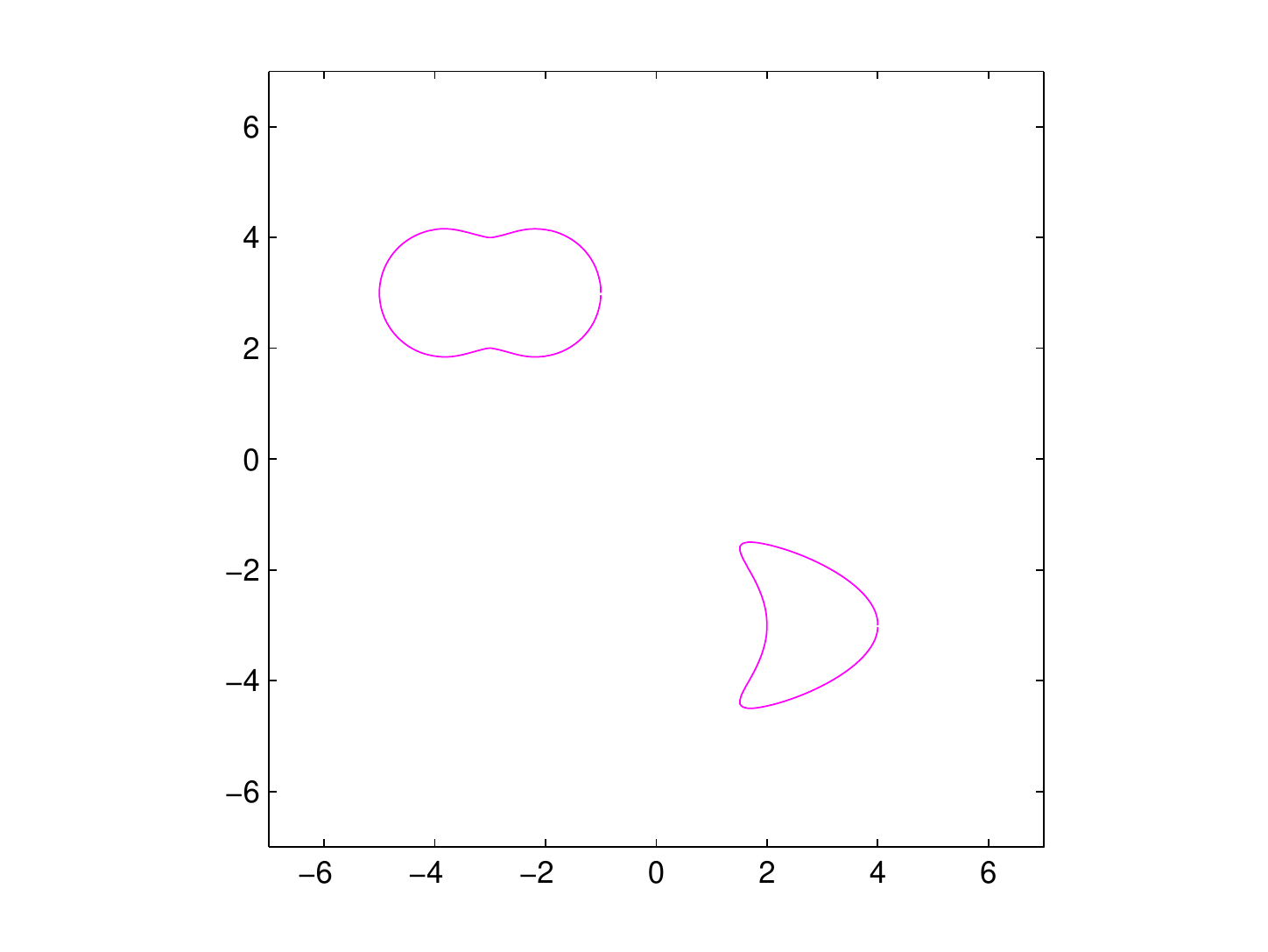}}
  \subfigure[\textbf{Dirichlet Kite, Neumann Peanut}]{
    \includegraphics[width=3in]{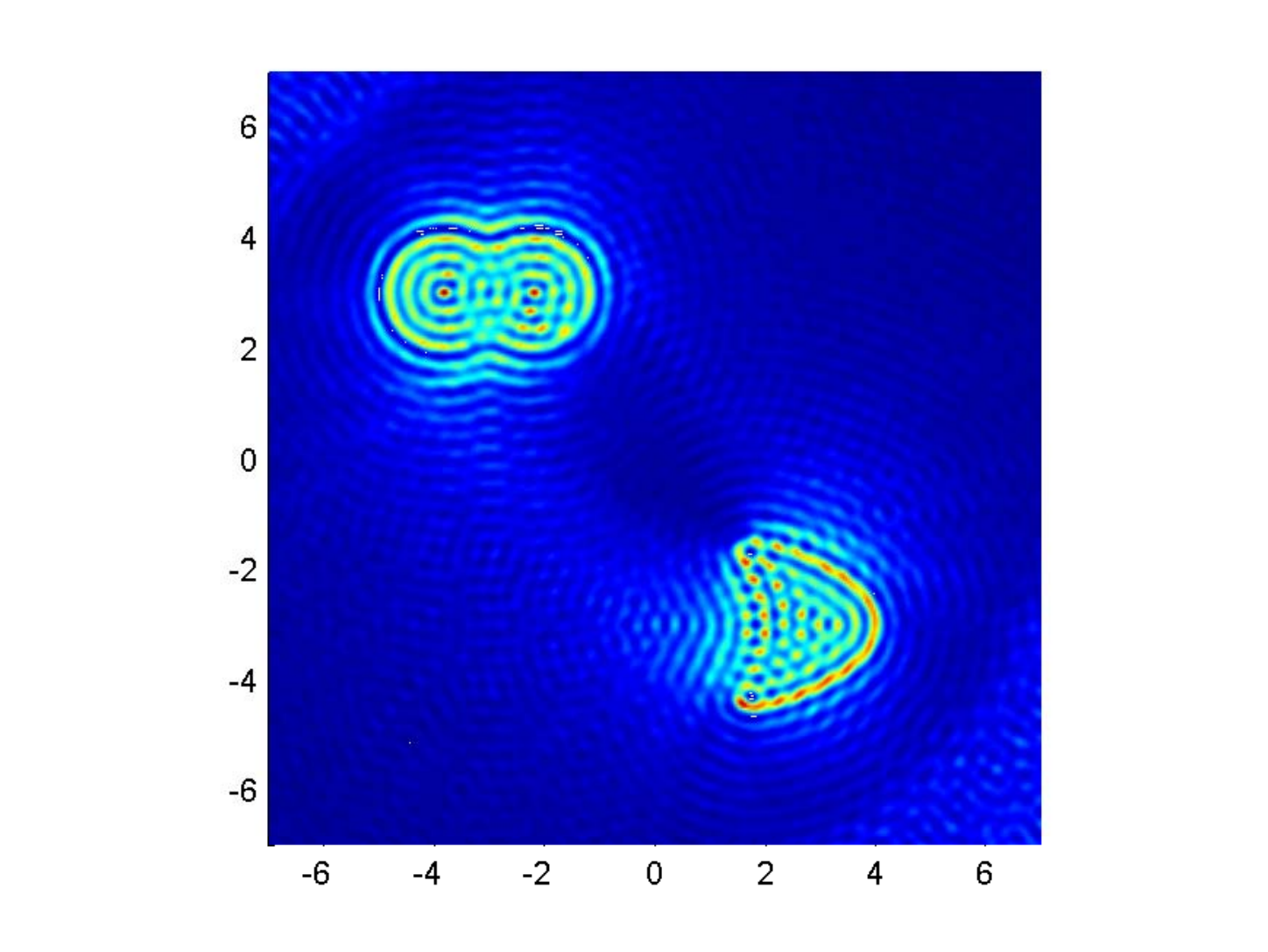}}
  \subfigure[\textbf{Dirichlet Peanut, Neumann Kite}]{
    \includegraphics[width=3in]{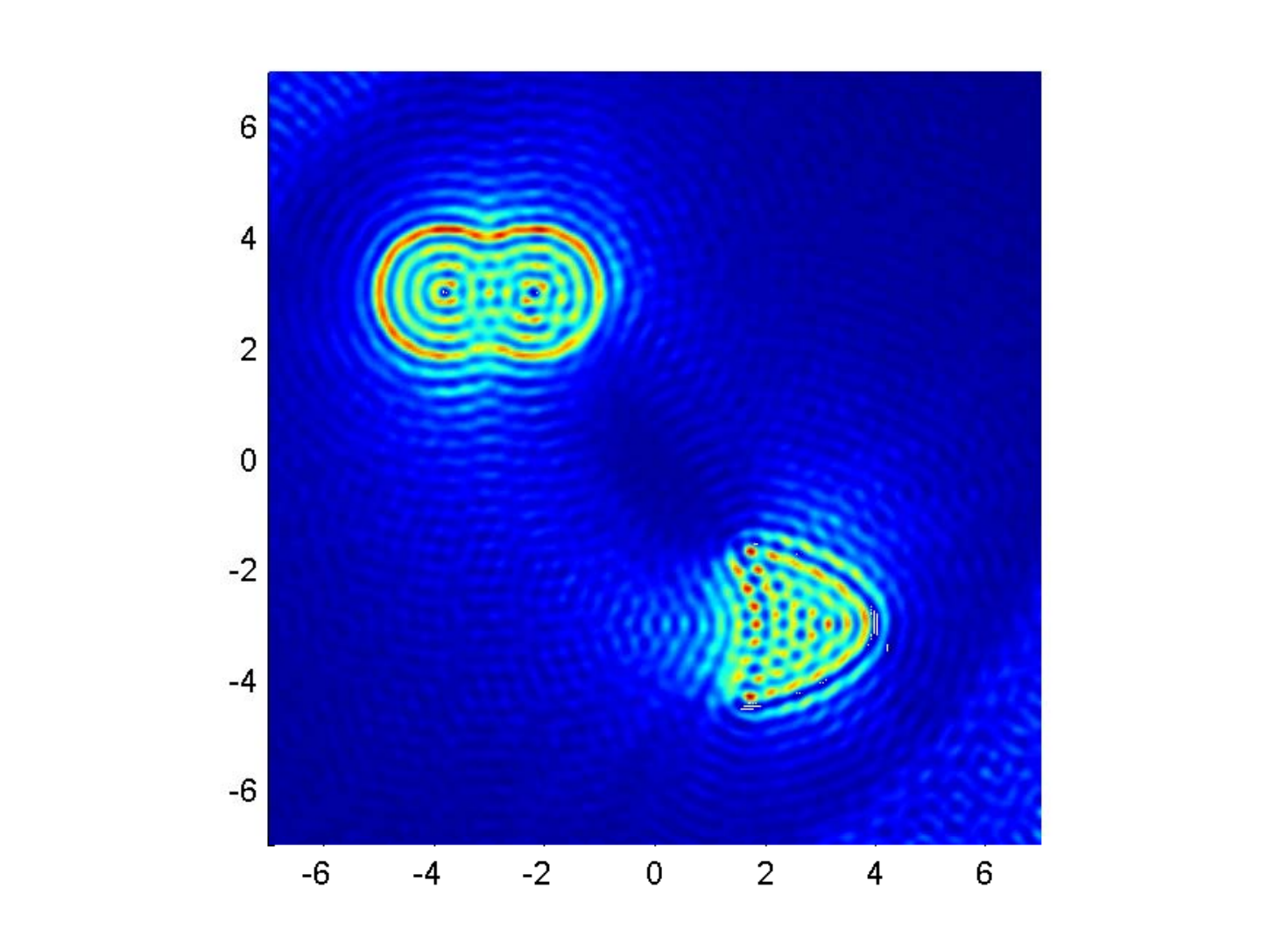}}
  \subfigure[\textbf{Dirichlet Peanut, Penetrable Kite}]{
    \includegraphics[width=3in]{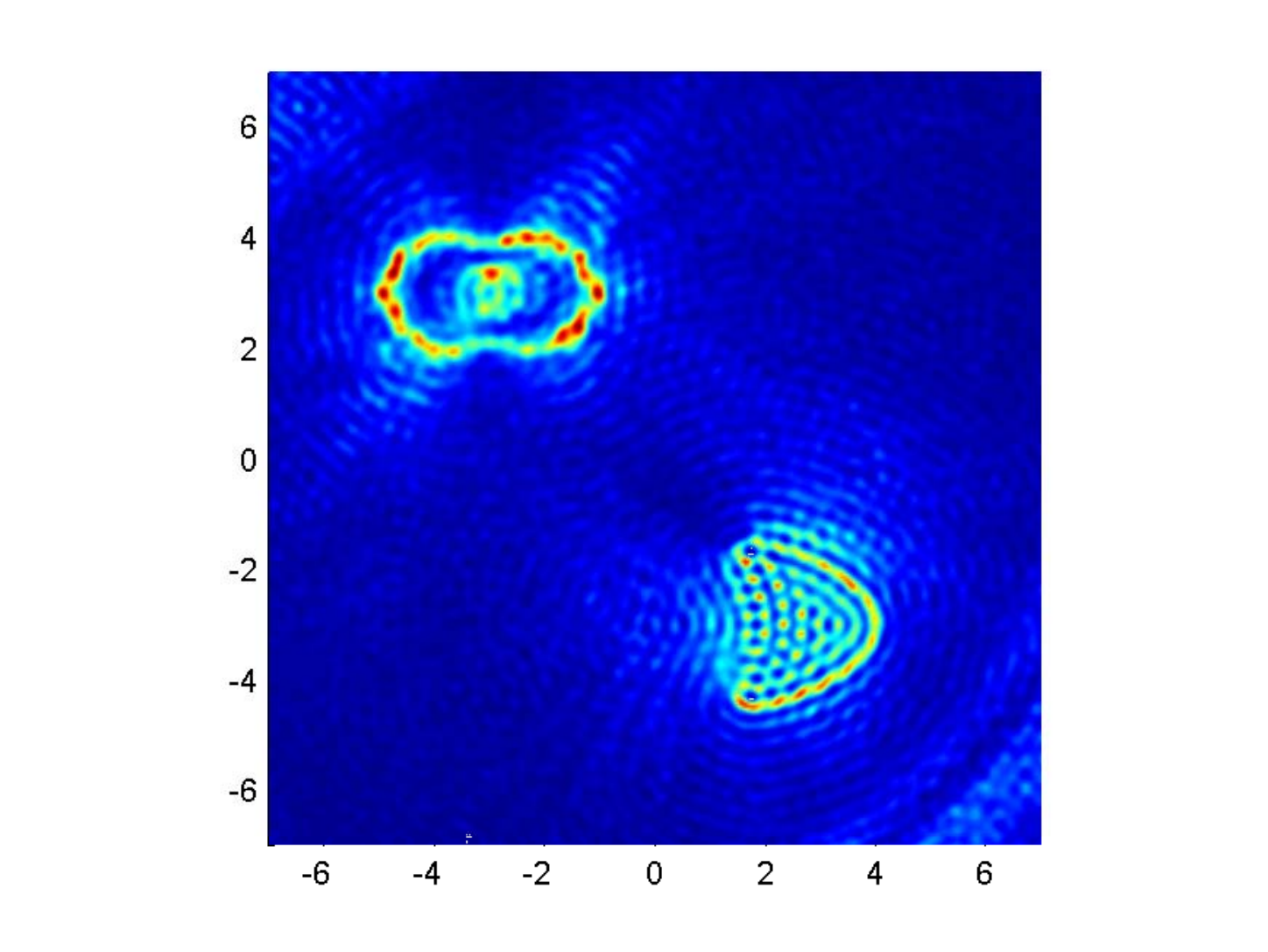}}
    \caption{{\bf Example MixedType.}\, Reconstruction of mixed type scatterers with $30\%$ noise. We take $k=10$, $N=128$, $q=-0.5$.}
\label{mixedtype}
\end{figure}

\medskip

\noindent

{\bf Example MultiScalar.}
In this example, the underlying scatterer is a combination of a big pear shaped domain and a small disk with radius $r=0.1$ or $r=0.2$.
The Dirichlet boundary condition is imposed on the boundary  of the scatterer.
The research domain is $[-6,6]\times[-6,6]$ with $301\times 301$ equally spaced sampling points.
The scattering amplitude is collected in 256 observation directions and 256 incident directions.
We observe, from Figures \ref{multiscalar1} and \ref{multiscalar2}, that the boundary of the scatterer is clearly located.
Figures \ref{multiscalar1} and \ref{multiscalar2} also show that the resolution can be improved with higher wave number.
\\
\begin{figure}[htbp]
  \centering
  \subfigure[\textbf{Original domain}]{
    \includegraphics[width=2in]{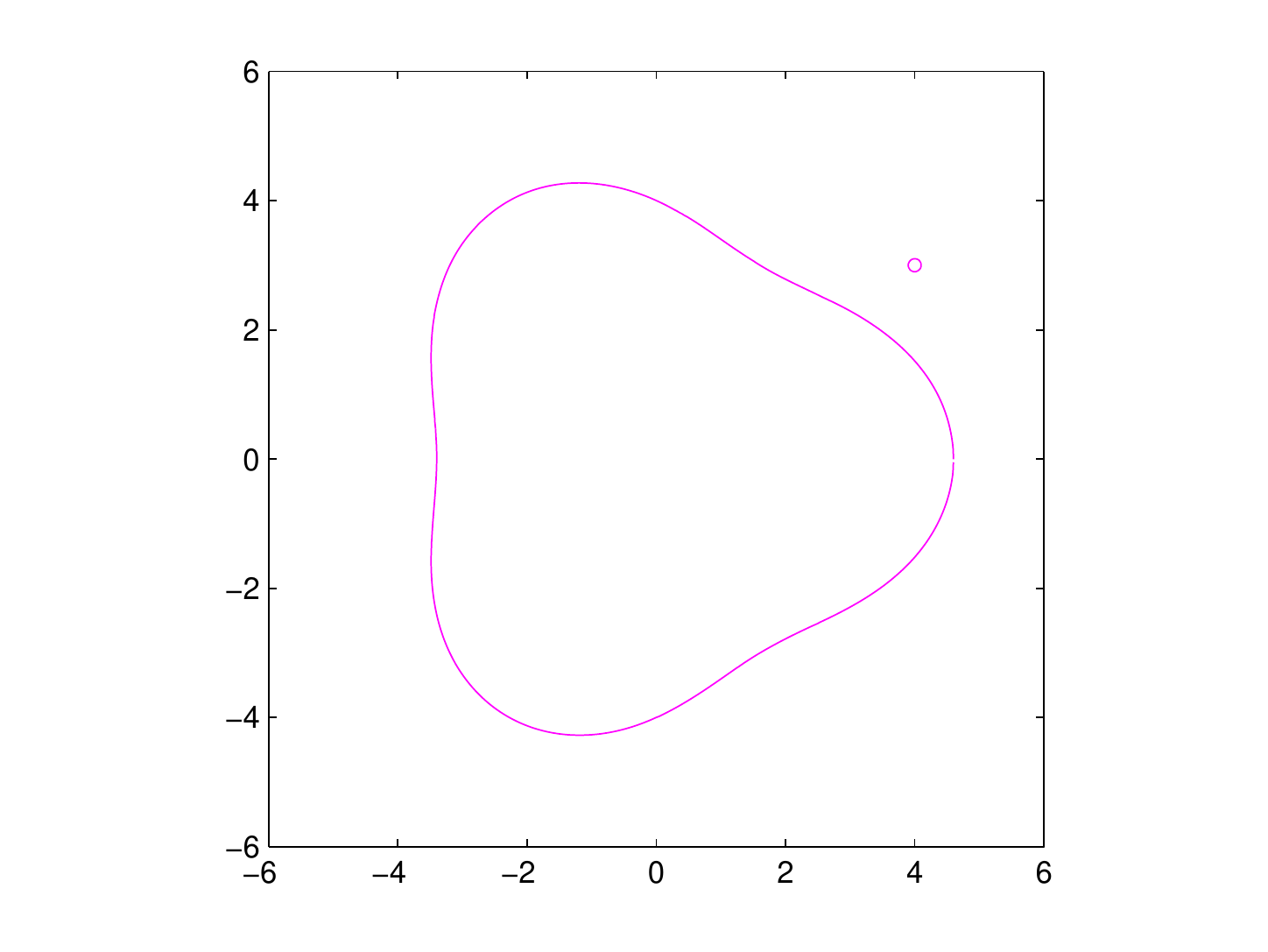}}
  \subfigure[\textbf{$k=5$}]{
    \includegraphics[width=2in]{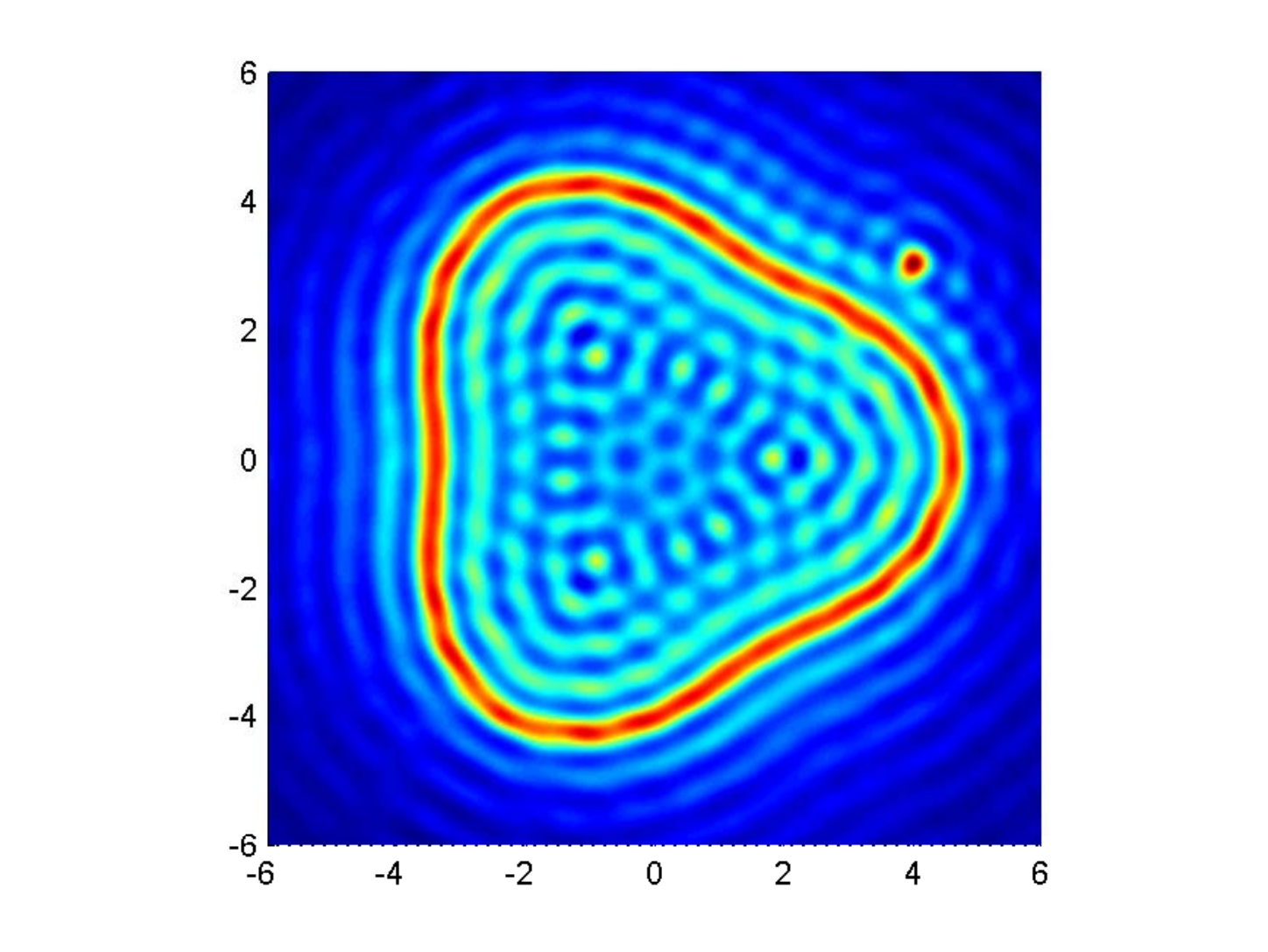}}
  \subfigure[\textbf{$k=10$}]{
    \includegraphics[width=2in]{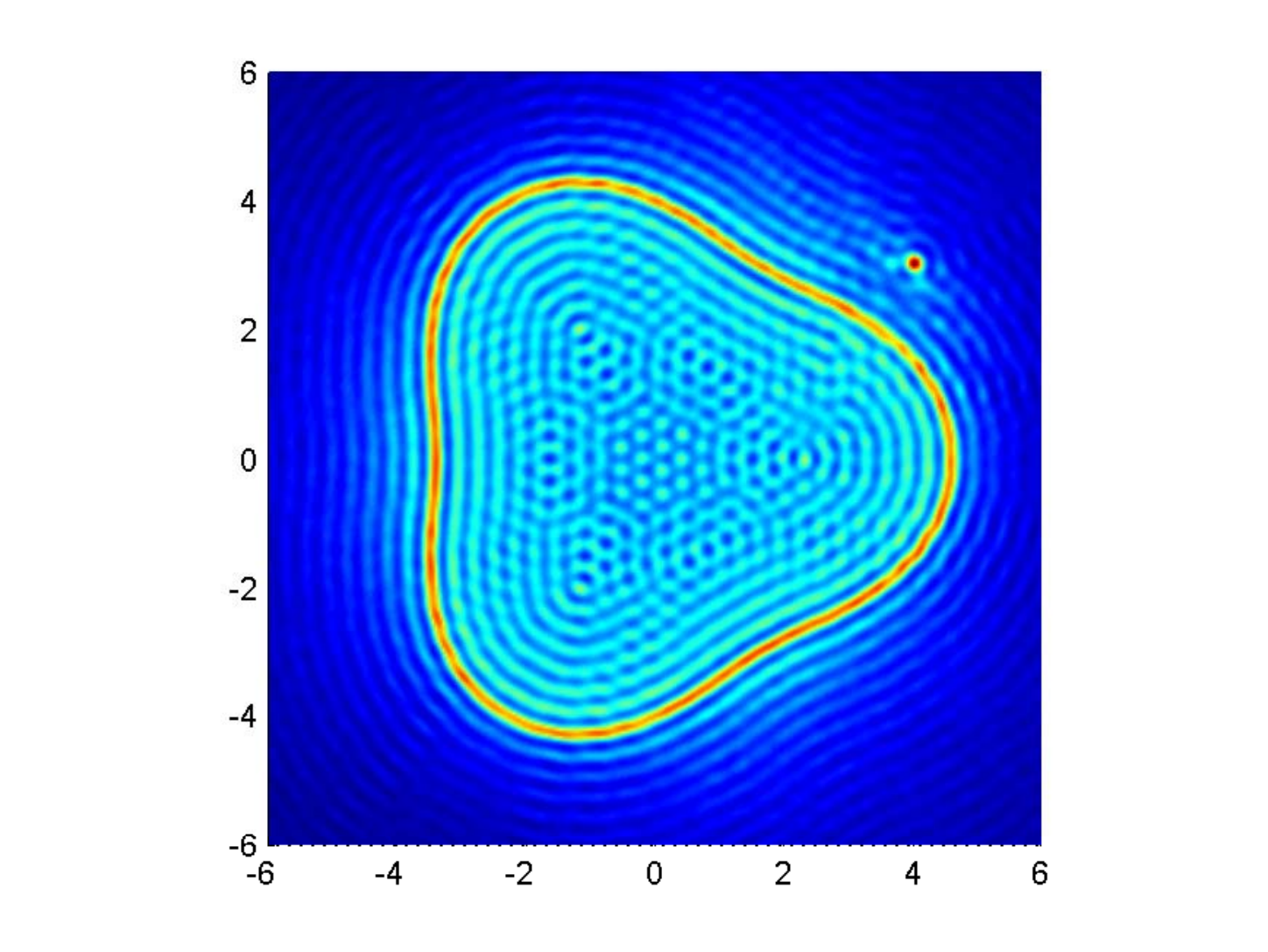}}
\caption{{\bf Example MultiScalar.}\, Reconstruction of a big pear shaped domain and a small disk with radius  $r=0.1$ and 30\% noise. }
\label{multiscalar1}
\end{figure}

\begin{figure}[htbp]
  \centering
  \subfigure[\textbf{Original domain}]{
    \includegraphics[width=2in]{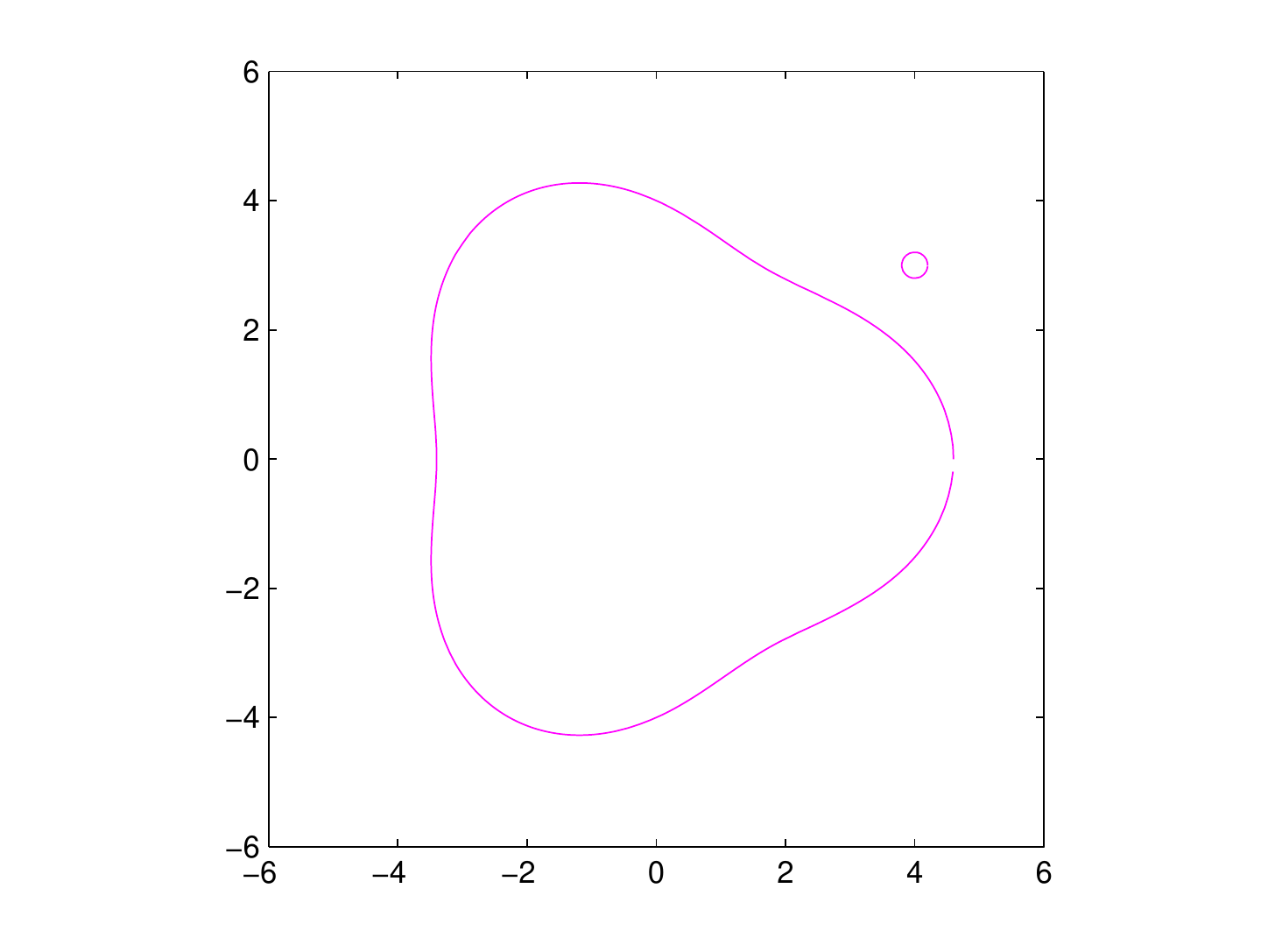}}
  \subfigure[\textbf{$k=5$}]{
    \includegraphics[width=2in]{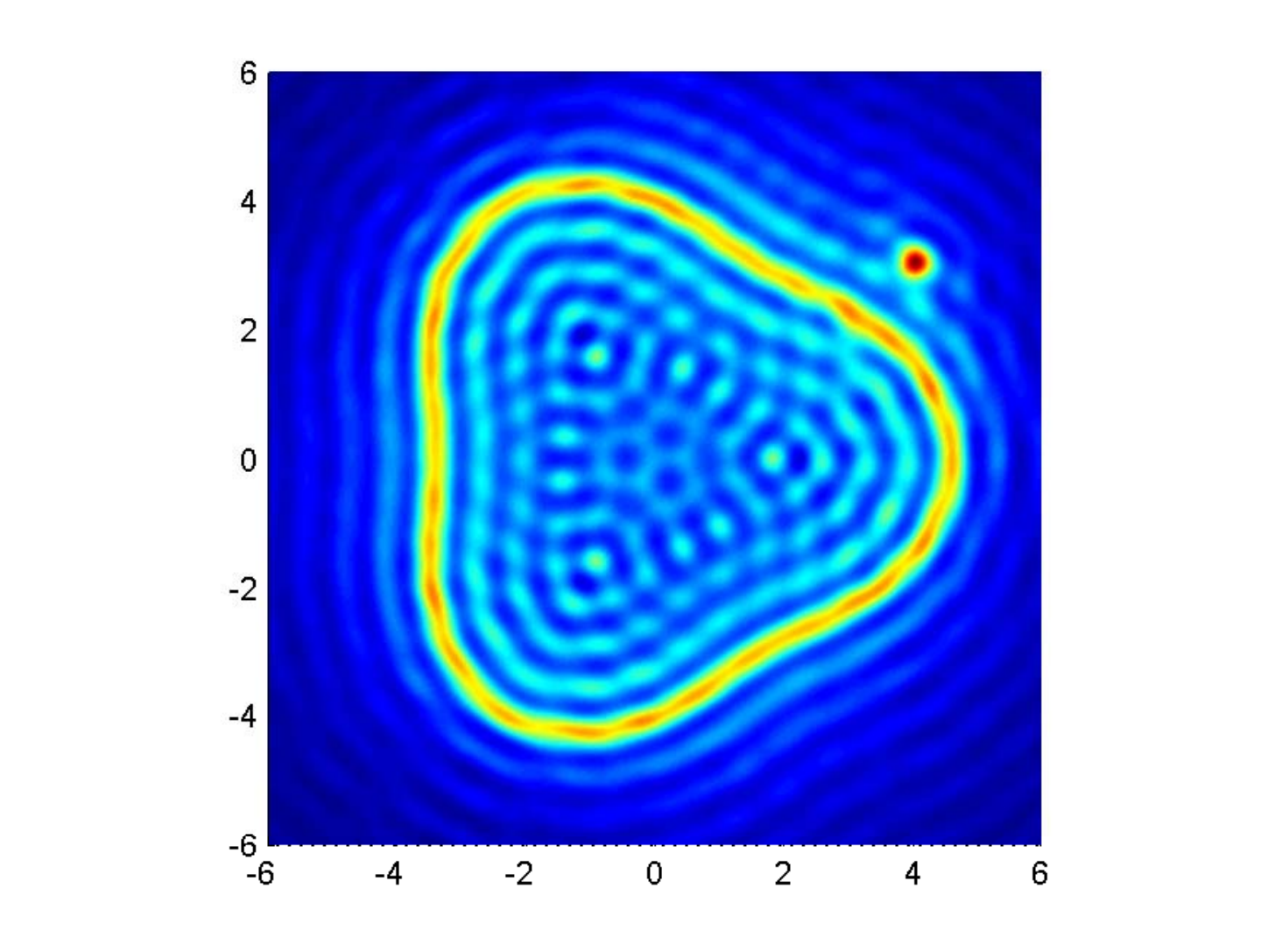}}
  \subfigure[\textbf{$k=10$}]{
    \includegraphics[width=2in]{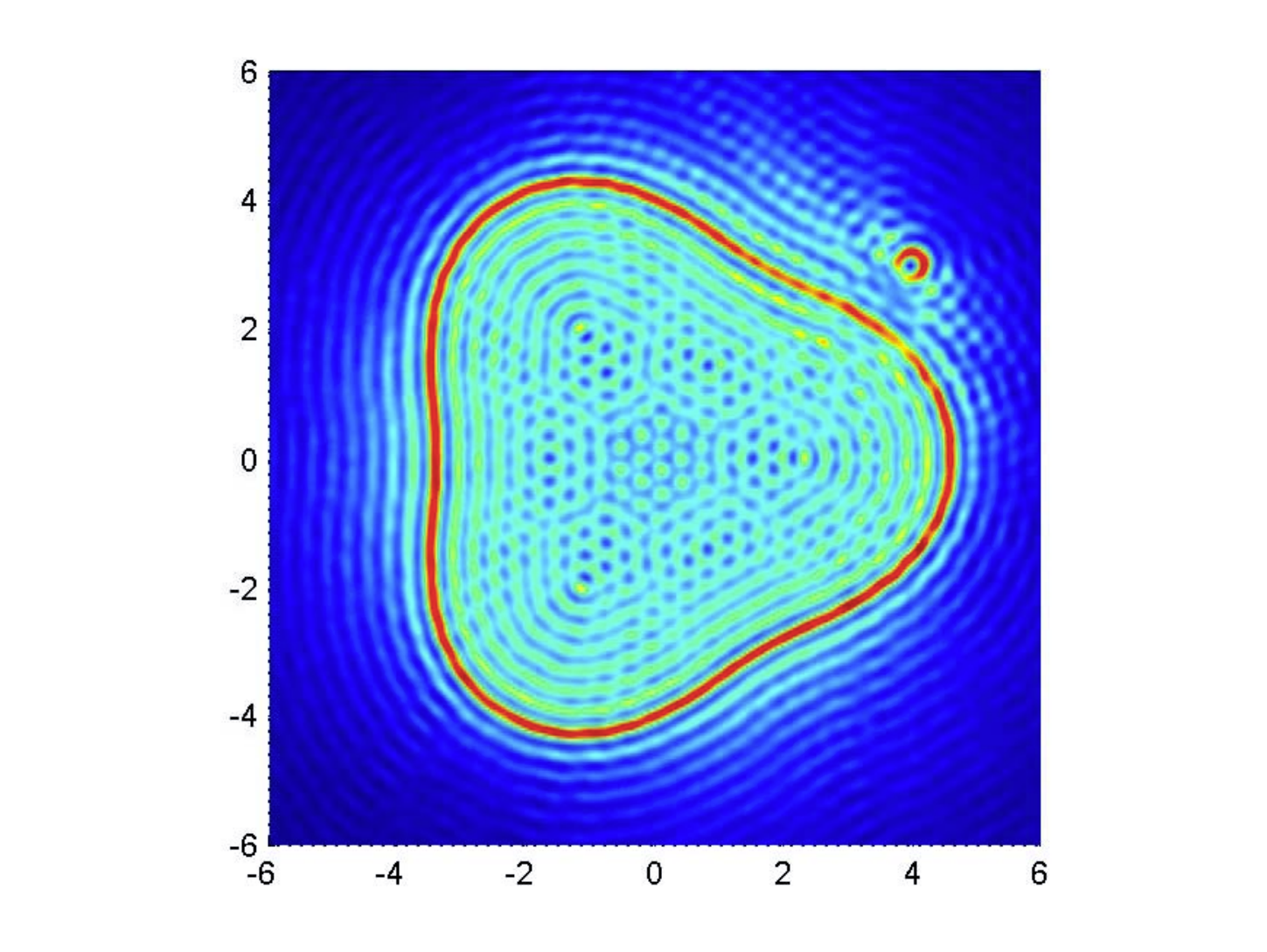}}
\caption{{\bf Example MultiScalar.}\, Reconstruction of a big pear shaped domain and a small disk with radius  $r=0.2$ and 30\% noise. }
\label{multiscalar2}
\end{figure}

\medskip

\noindent

{\bf Example ResolutionLimit.}
As shown in Figure \ref{resolutionlimit}(a), the underling scatterer $\Om$ is given as the union of two disjoint obstacles
$\Om=\Om_{1}\cup\Om_{2}$ where $\Om_{1}$ is a disk with radius $r=4$ and center $(a,b)=(-1,0)$, while $\Om_{2}$ is
kite shaped domain centered at $(a,b)=(5,0)$. These two disjoint components are very close to each other.
Again, we impose Dirichlet boundary condition on the boundary $\pa \Om$.
The search domain is the rectangle $[-7,7]\times[-7,7]$ with $301\times 301$ equally spaced sampling points.
To make the gap available, we have used 360 incident directions and 360 observation directions in our simulations.
The distance between these two scatterers is about 0.8.
The relation between the wave number $k$ and the probe wavelength $l$ is given by $kl=2\pi$.
We test two wave numbers $k=4$ and $k=8$, which means that the corresponding wavelengths are $l=2\pi/4\approx1.6$ and $l=2\pi/8\approx0.8$.
The results are shown in Figure \ref{resolutionlimit}, from which we observe that the gap appears clearly even if it is just about half a wavelength and gets more visible by
using a higher wave number.\\

\begin{figure}[htbp]
  \centering
  \subfigure[\textbf{Original domain}]{
    \includegraphics[width=2in]{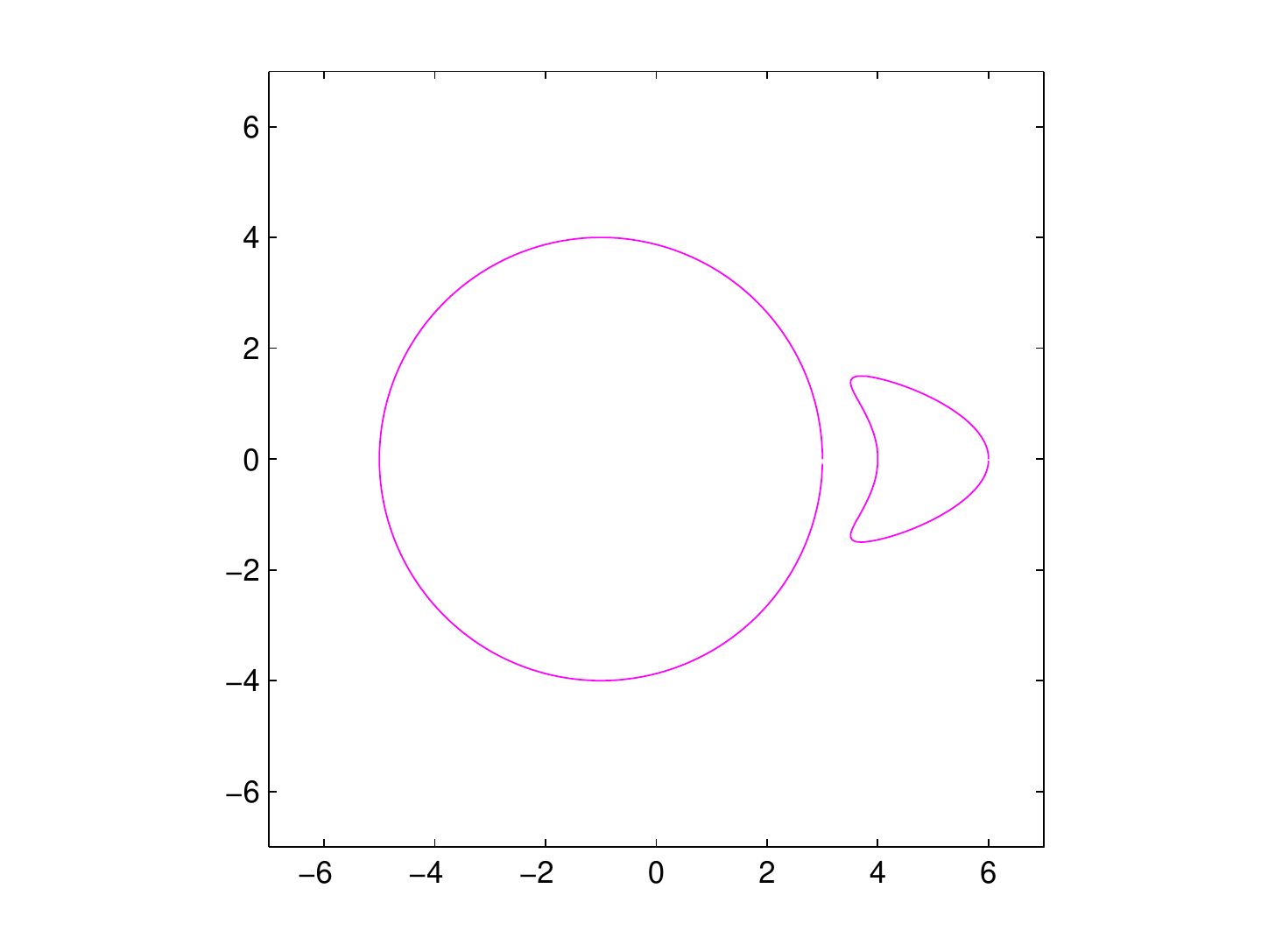}}
  \subfigure[\textbf{$k=4$}]{
    \includegraphics[width=2in]{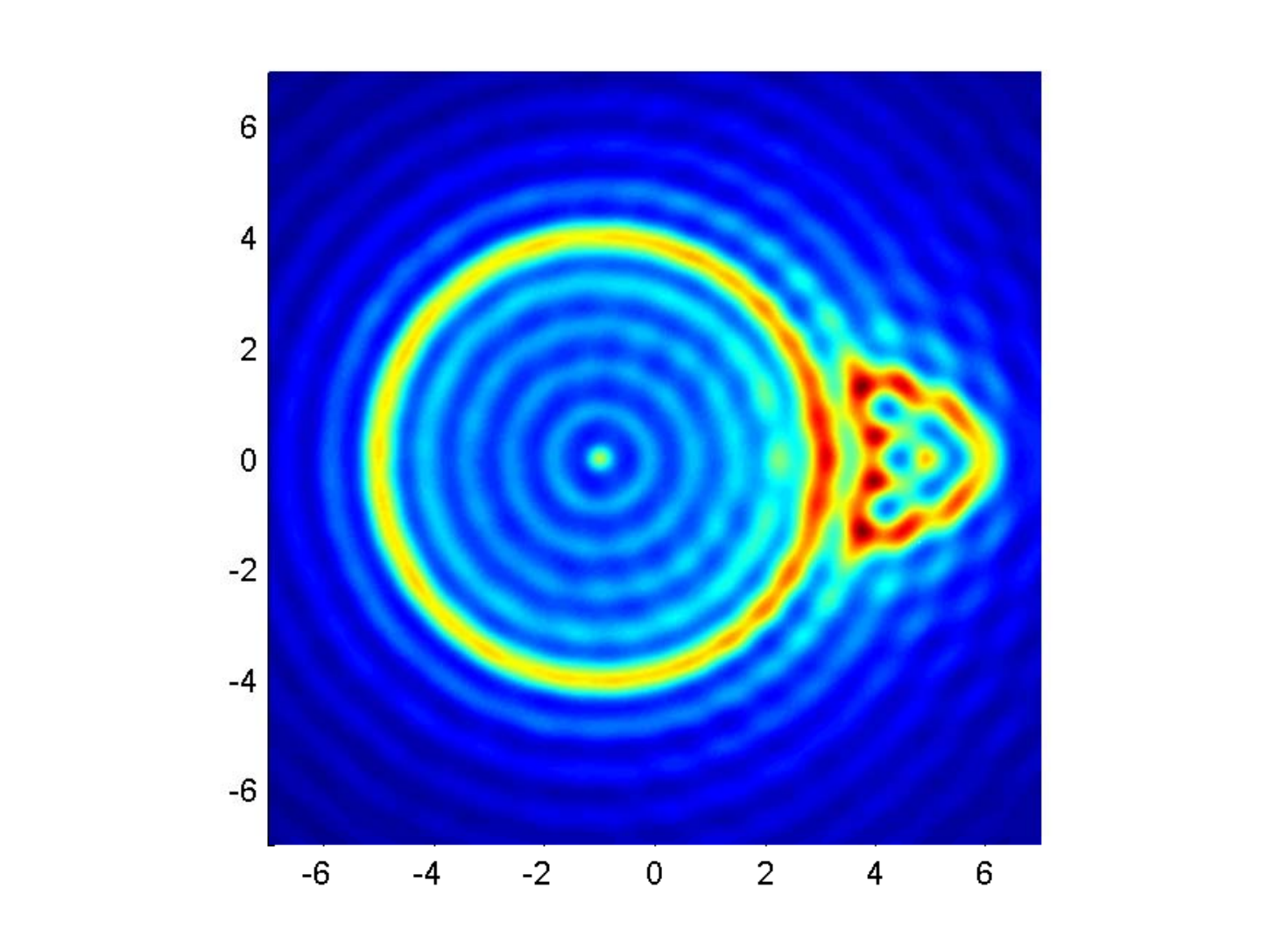}}
  \subfigure[\textbf{$k=8$}]{
    \includegraphics[width=2in]{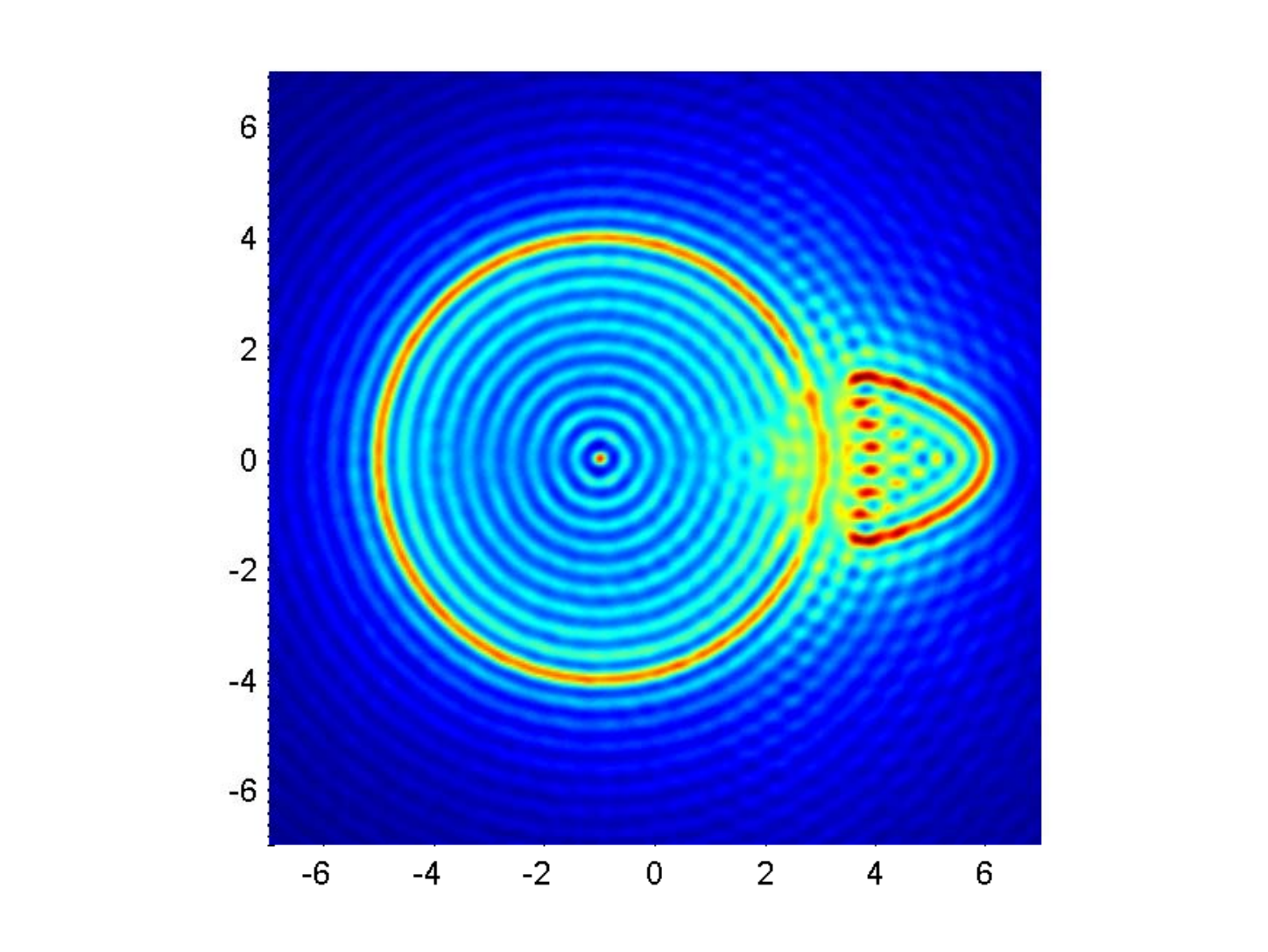}}
\caption{{\bf Example ResolutionLimit.}\, Reconstruction of two objects that are close to each other with $30\%$ noise by using different wave numbers.}
\label{resolutionlimit}
\end{figure}

\medskip

\noindent

{\bf Example HighResolution.}
In the previous two Examples {\bf MultiScalar} and {\bf ResolutionLimit}, we have found that the resolution can be improved with the increase of the wave number $k$.
We will further verify this fact in this example. The second contribution of this example is to show that the resolution can also be improved by a higher power $\rho$.
We choose a kite shaped domain with Dirichlet boundary condition as an unknown object.
We set three different wave numbers $k=5,\,10$ and $15$ and three different powers $\rho=1,\,2$ and $\rho=8$. The corresponding observation and incident direction number $N=16*k$.
The research domain is $[-4,4]\times[-4,4]$ with $151\times 151$ equally spaced sampling points.
Figure \ref{highresolution} shows the corresponding reconstructions.
Obviously, the shadows are greatly reduced with the increase of the power $\rho$.
To our surprise, it seems that the indicator always takes its maximum on the boundary of the scatterer, which results in a shaper reconstruction of the boundary of the scatterer.
However, there is no general theory for this fact.
\\

\begin{figure}[htbp]
  \centering
  \subfigure[\textbf{$k=5,\,\rho=1$}]{
    \includegraphics[width=2in]{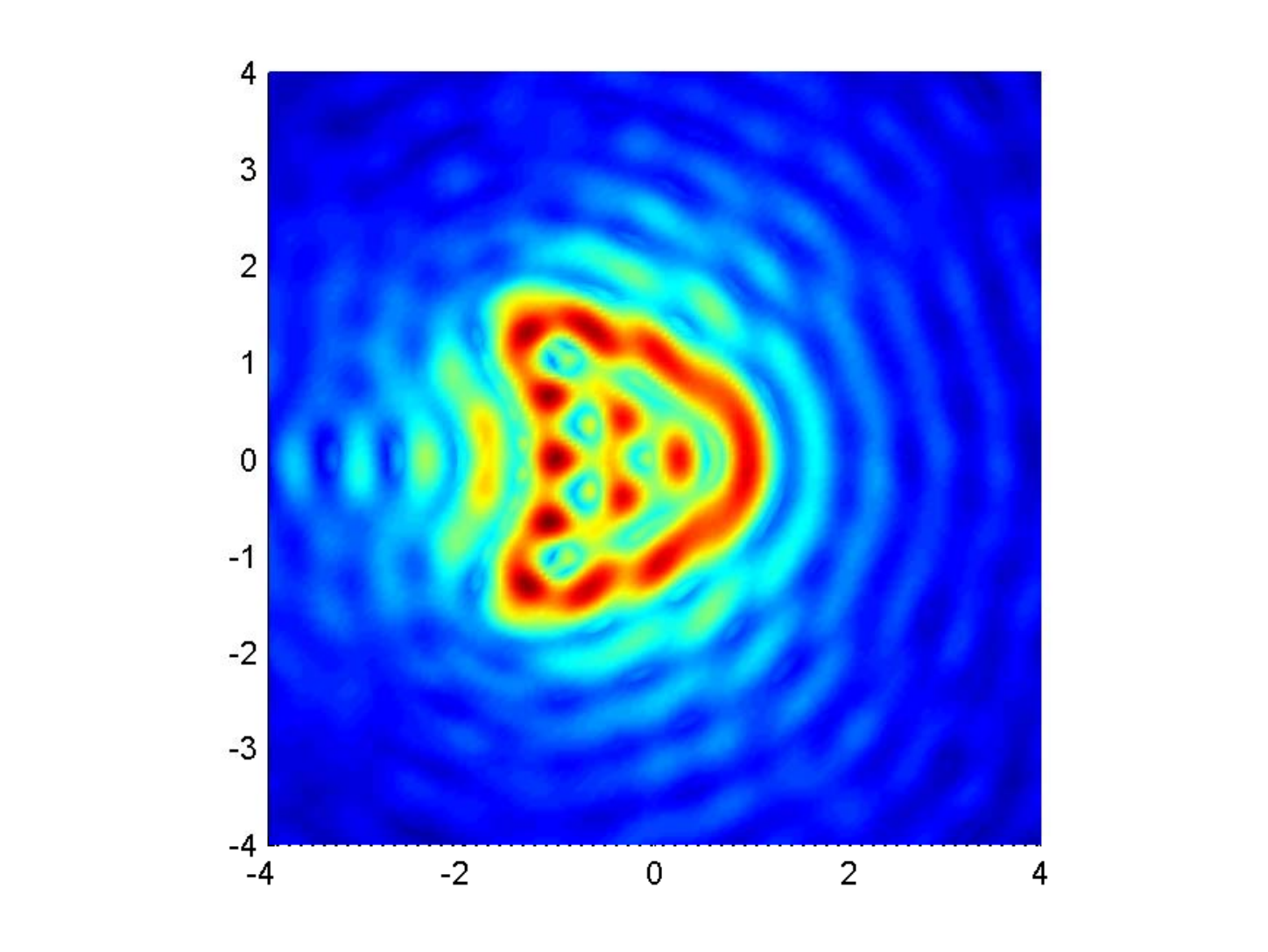}}
  \subfigure[\textbf{$k=5,\,\rho=2$}]{
    \includegraphics[width=2in]{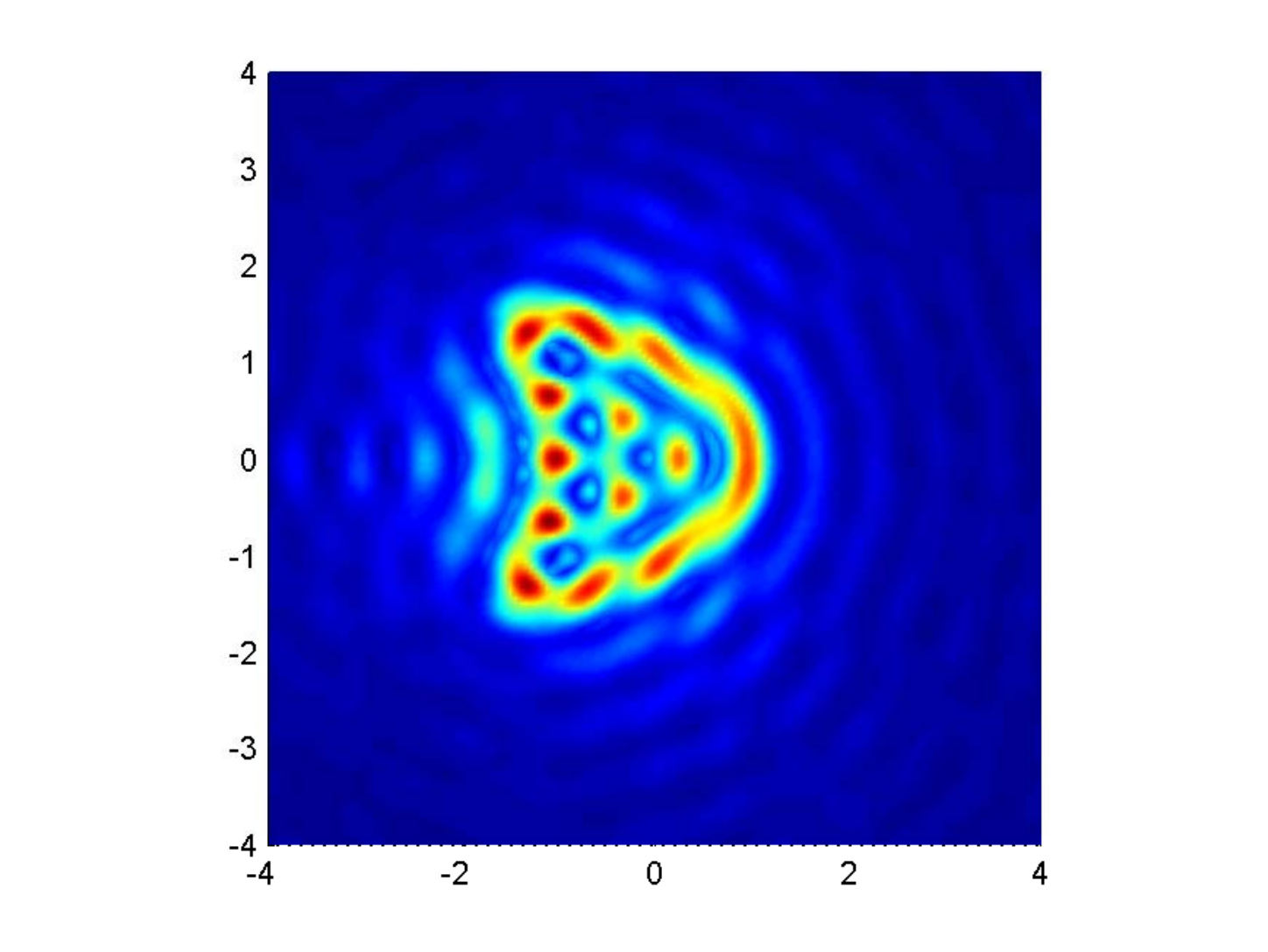}}
  \subfigure[\textbf{$k=5,\,\rho=8$}]{
    \includegraphics[width=2in]{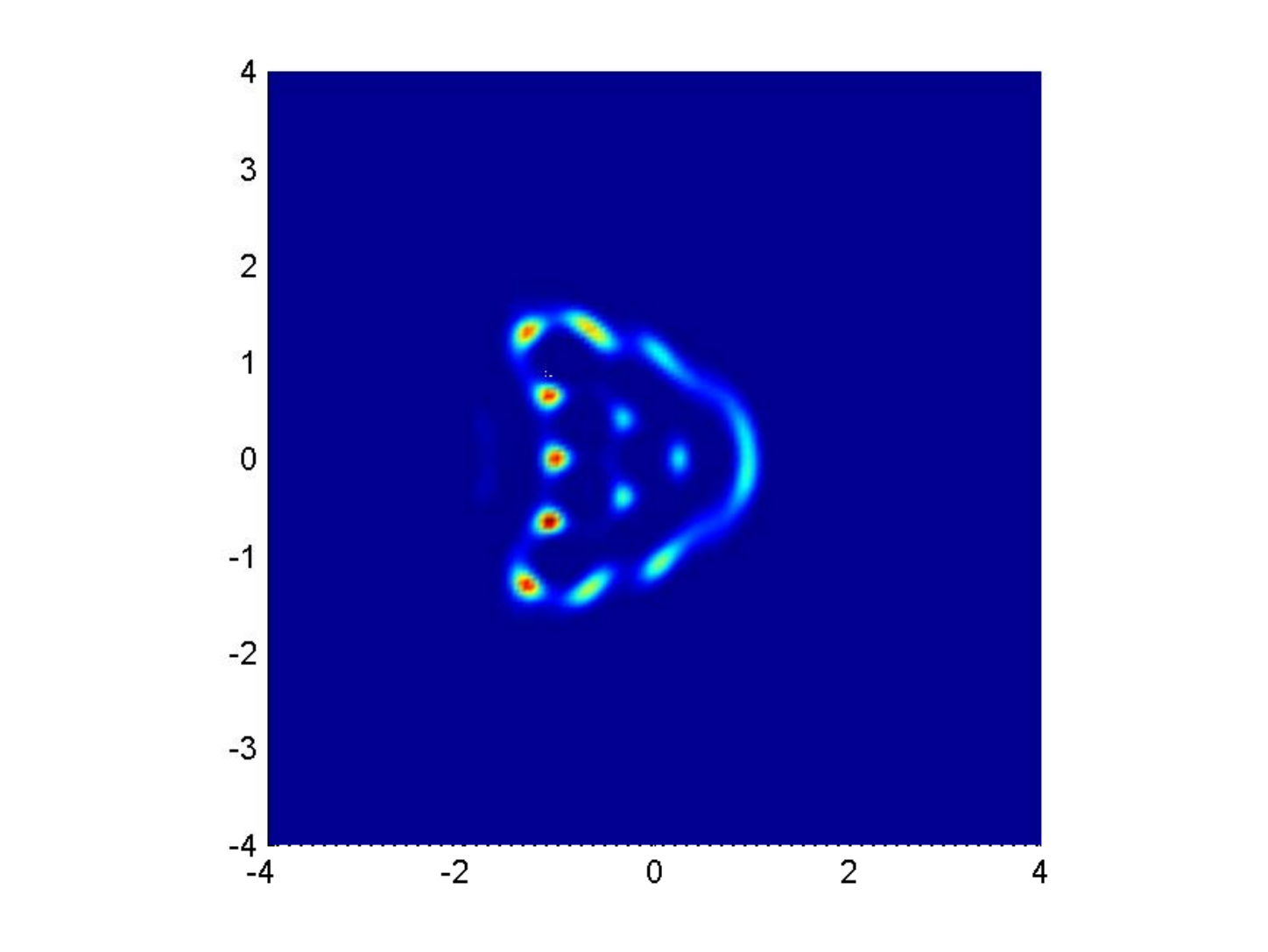}}
  \subfigure[\textbf{$k=10,\,\rho=1$}]{
    \includegraphics[width=2in]{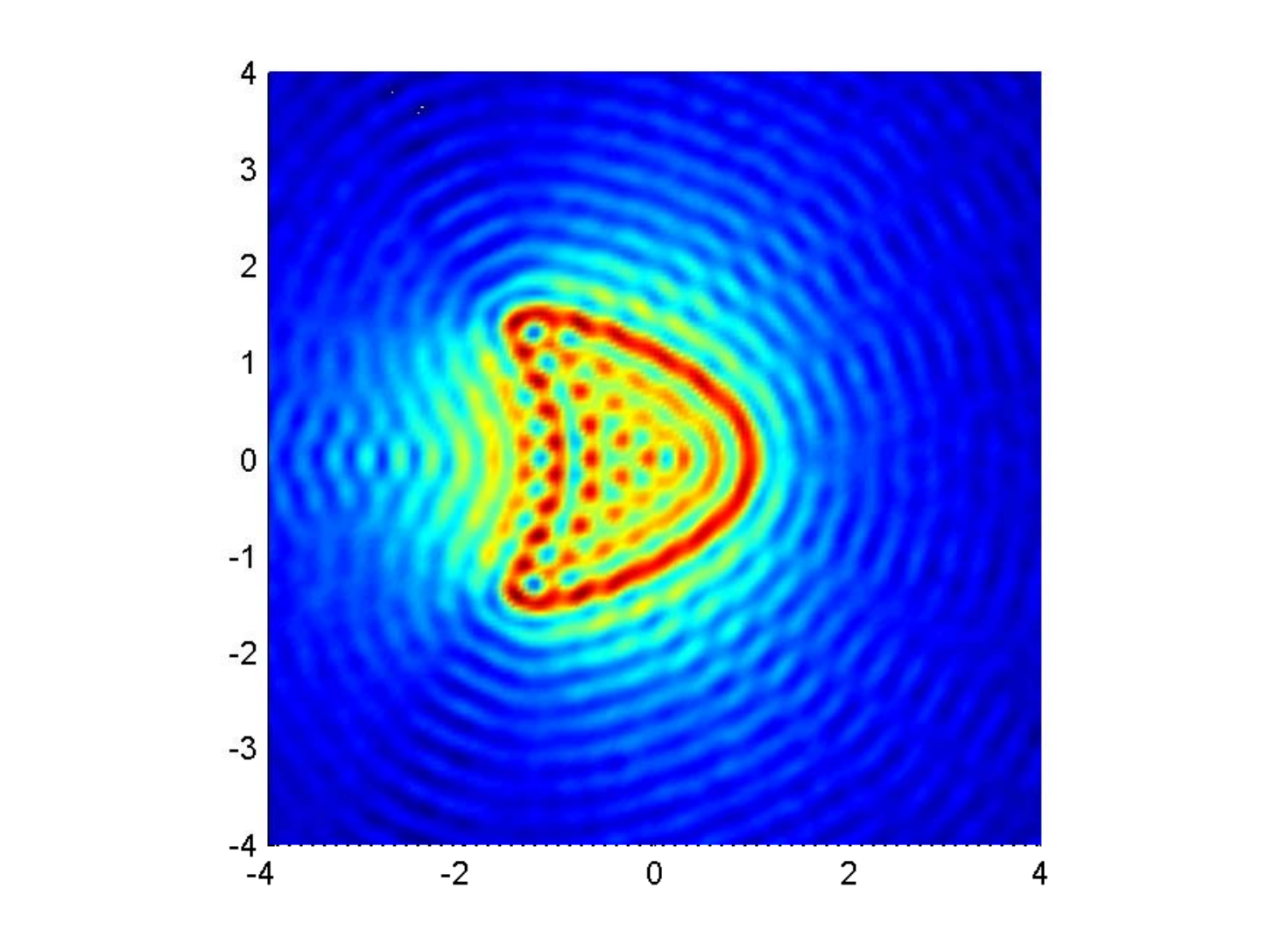}}
  \subfigure[\textbf{$k=10,\,\rho=2$}]{
    \includegraphics[width=2in]{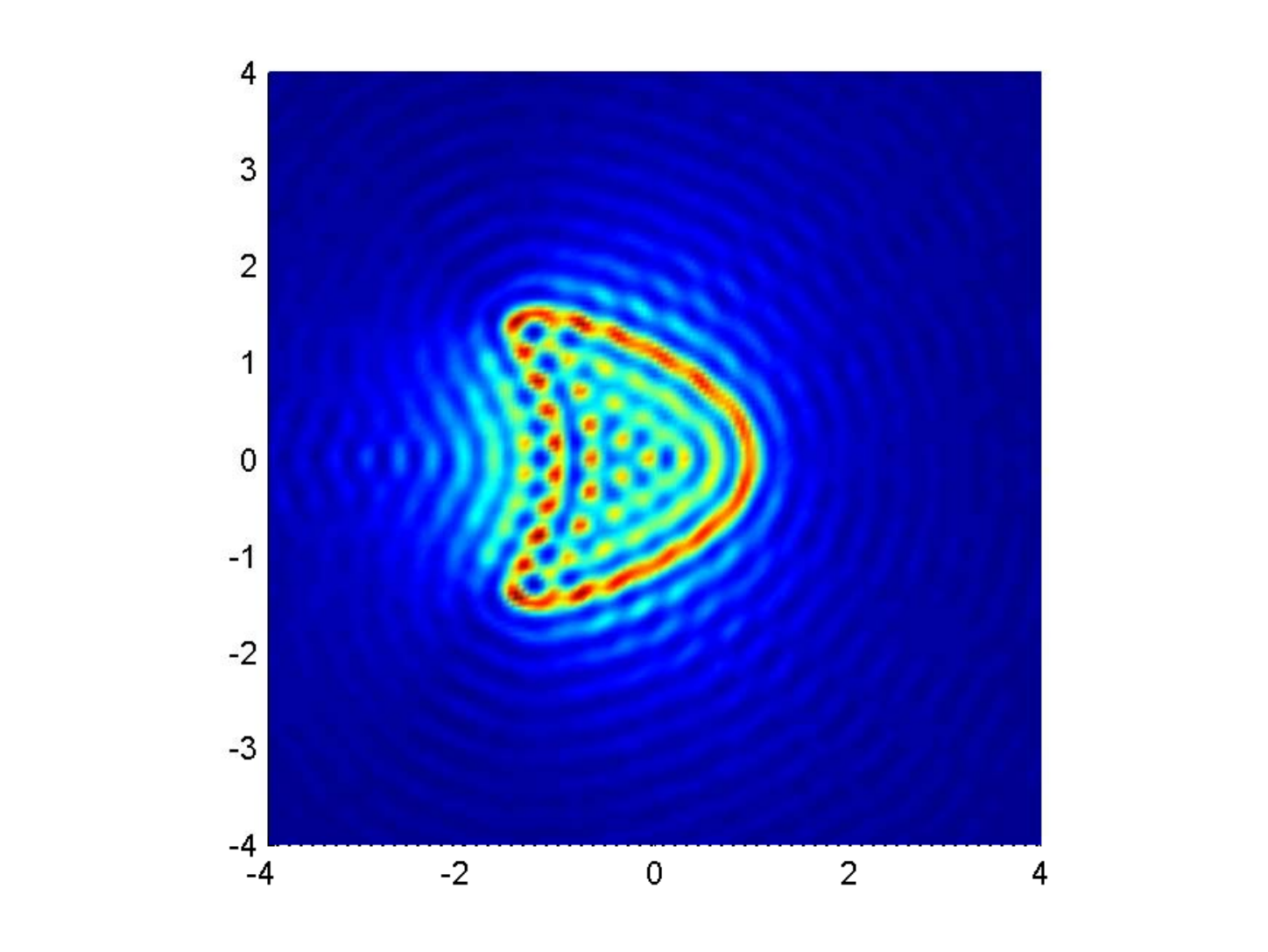}}
  \subfigure[\textbf{$k=10,\,\rho=8$}]{
    \includegraphics[width=2in]{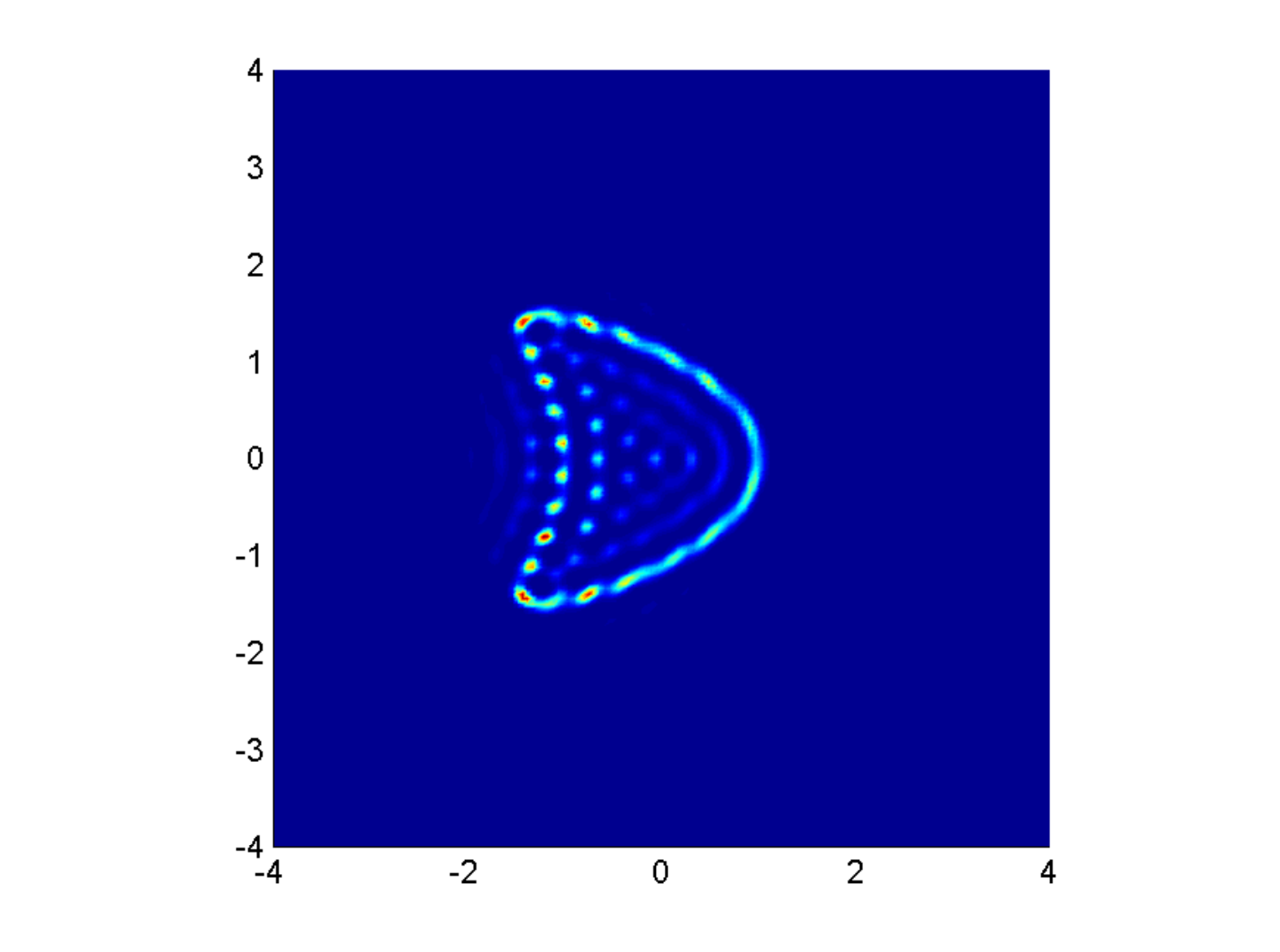}}
  \subfigure[\textbf{$k=15,\,\rho=1$}]{
    \includegraphics[width=2in]{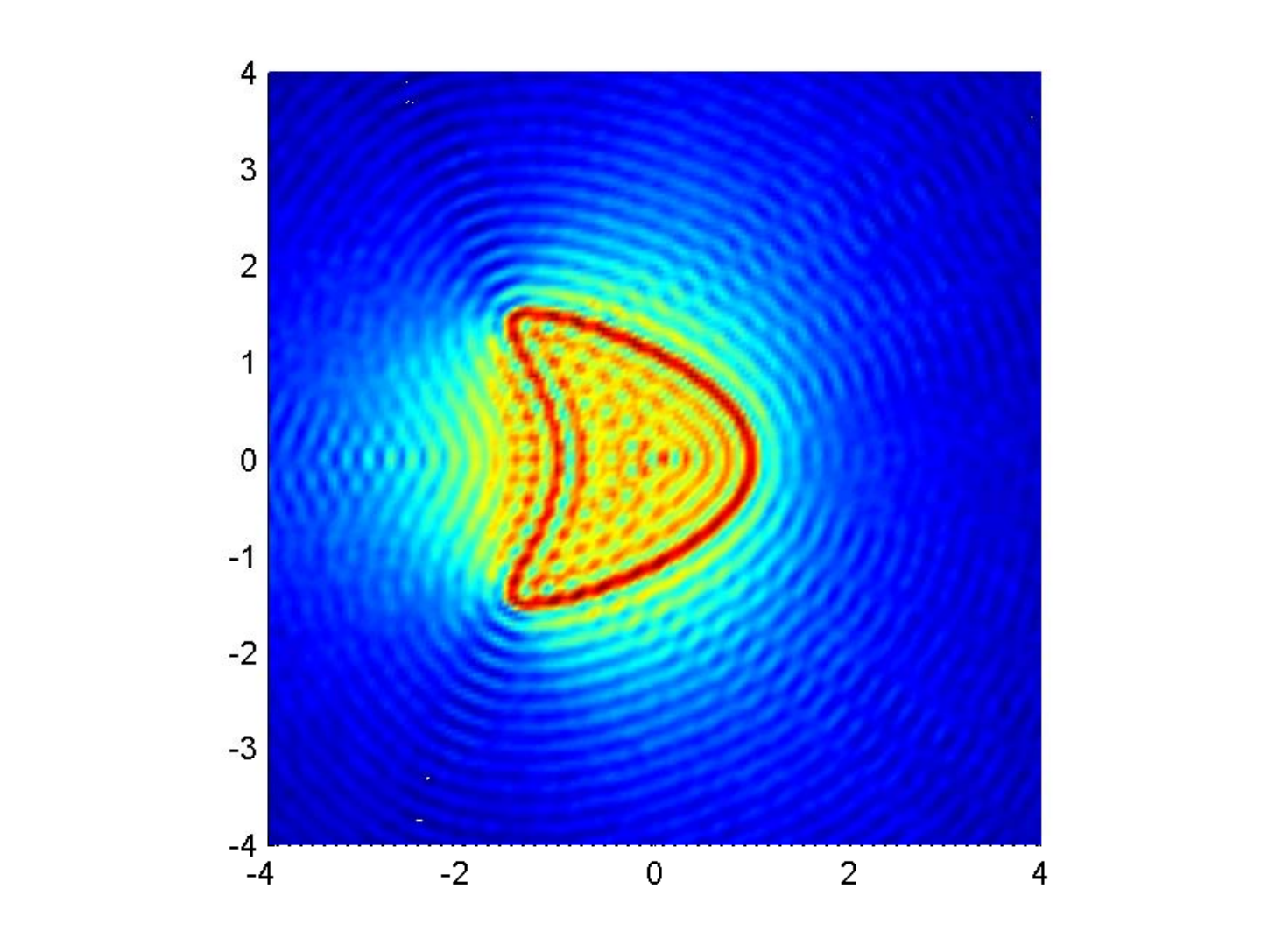}}
  \subfigure[\textbf{$k=15,\,\rho=2$}]{
    \includegraphics[width=2in]{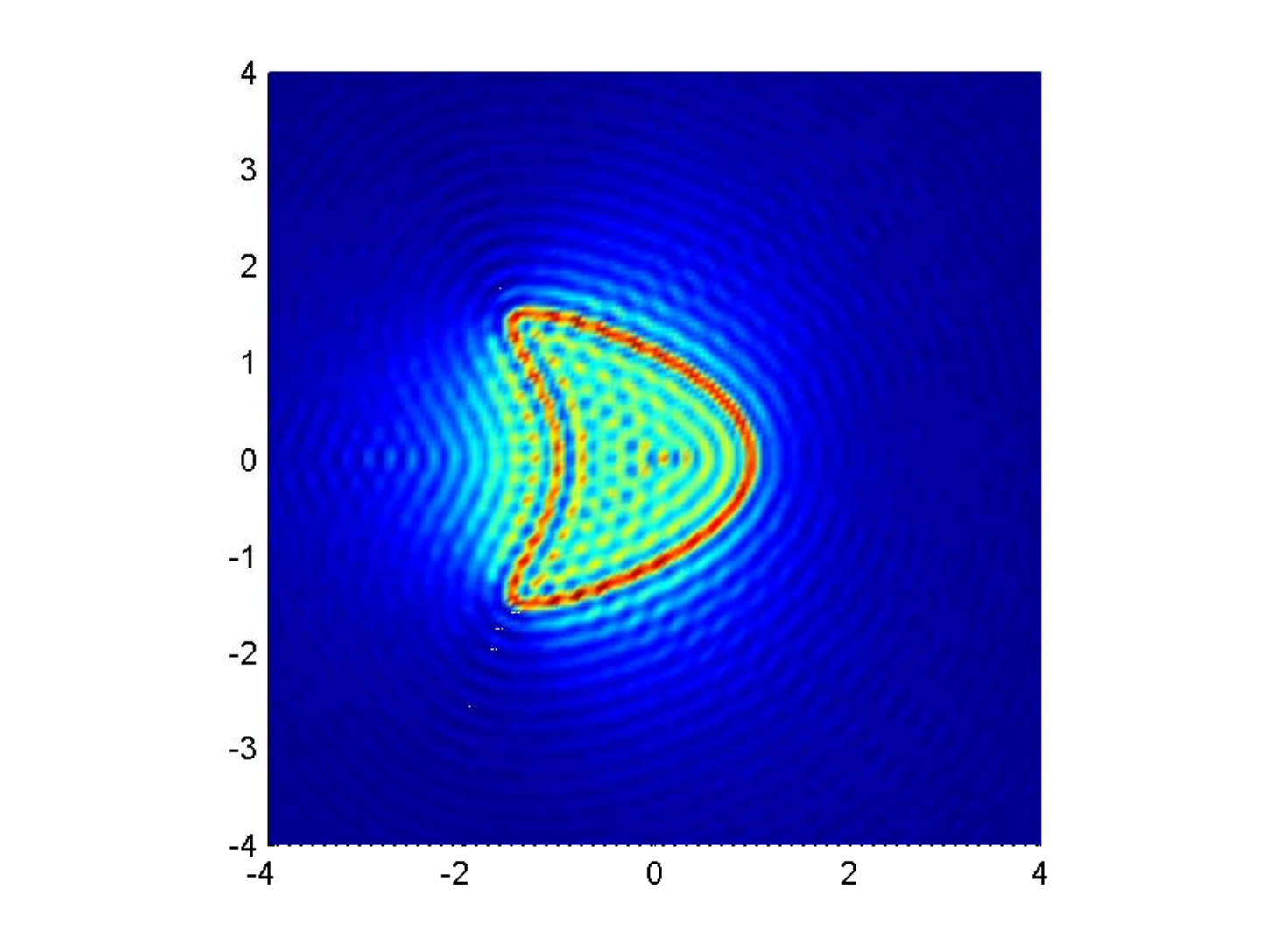}}
  \subfigure[\textbf{$k=15,\,\rho=8$}]{
    \includegraphics[width=2in]{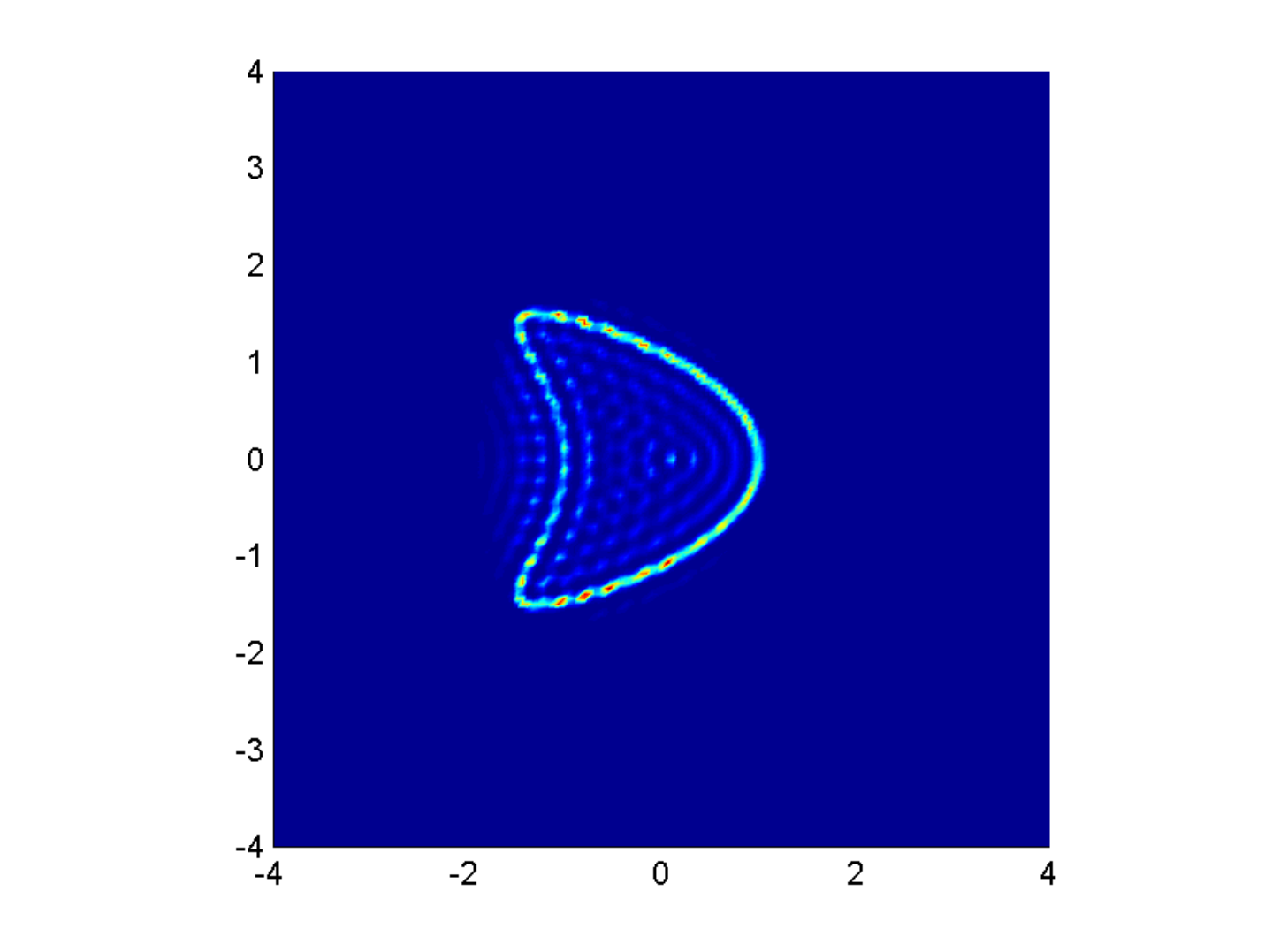}}
\caption{{\bf Example HighResolution.}\, Reconstruction by using different indicators $I^{\rho}_{new}$ with different wave numbers $k$ and different powers $\rho$. $30\%$ noise is
added to the far field data.}
\label{highresolution}
\end{figure}

In all the above examples, we observe that the reconstructions are rather satisfactory, with the consideration of the severe ill-posedness of the inverse scattering problems and the
fact that at least $30\%$ noise is added in the measurements (scattering amplitudes).

\section{Concluding remarks}
\label{sec4}
\setcounter{equation}{0}

In this paper we propose a new sampling method for shape identification in inverse acoustic scattering problems.
Both the theory foundation and numerical simulations are presented.
Only matrix multiplications are involved in the computation, thus
our method is very fast and robust against measurement noise from the numerical point of view.
The recovering scheme works independently of the physical properties of the underlying scatterers.
There might be several components with different physical properties, or with different scalar sizes, presented simultaneously.
Our method also allows us to distinguish two components of distance about one half of the wavelength, which is known to be challenging for numerical reconstruction.

However, the theory foundation is still less developed than the classical sampling method, e.g., the factorization method \cite{KirschGrinberg}. In our numerical simulations,
we have observed that the indicator always takes its maximum on the boundary of the scatterer. However, there are still no theory analysis on this fact.
Similar techniques can also be applied to inverse scattering of elastic waves or electromagnetic waves, which shall be addressed in a forthcoming work.

\section*{Acknowledgements}

The research of X. Liu was supported by the NNSF of China under grants 11571355, 61379093 and 91430102.

\end{document}